\documentclass[a4paper,
fontsize=11pt,%
oneside,%
numbers=enddot]{scrartcl}
\KOMAoptions{DIV=12} 
\tolerance 720
\usepackage[utf8]{inputenc}
\usepackage[T1]{fontenc}
\usepackage{textcomp}
\usepackage[fleqn]{amsmath}
\usepackage{amsthm}
\usepackage{amssymb}
\usepackage{amsfonts}
\usepackage{graphicx}
\usepackage{libertine}
\usepackage[libertine,
slantedGreek,
nosymbolsc,
nonewtxmathopt,
subscriptcorrection]{newtxmath}
\usepackage[scaled=0.95,varqu,varl]{inconsolata}
\frenchspacing
\usepackage[scr=boondox]{mathalpha} 
\usepackage{euscript} 
\usepackage{leftindex}
\allowdisplaybreaks[1] 
\numberwithin{equation}{section} 
\usepackage{xcolor}
\definecolor{dblue}{HTML}{0455BF}
\definecolor{dgreen}{HTML}{02724A}
\definecolor{dgreen2}{HTML}{025951}
\definecolor{dred}{HTML}{D90404}
\definecolor{dviolet}{HTML}{42208C}
\definecolor{labelkey}{HTML}{025951}
\definecolor{refkey}{HTML}{025951}
\definecolor{orng}{HTML}{D35400}
\definecolor{pblue}{rgb}{0.1176,0.5647,1}
\definecolor{pgreen}{rgb}{0.1961,0.8039,0.1961}
\definecolor{pred}{rgb}{1.0,0.2706,0.0}
\definecolor{fred}{rgb}{0.98,0.40,0.93}
\definecolor{pyellow}{rgb}{1.0,0.6471,0.0}
\usepackage{tikz}
\usepackage{pgfplots}
\usepgfplotslibrary{colormaps}

\pgfplotsset{colormap={setting1}{color=(pblue) color=(pyellow) 
color=(pred)},
colormap={setting2}{rgb255=(255,0,0) rgb255=(255,255,0)}}
\addtokomafont{section}{\centering}

\usepackage{enumitem}
\setlist{itemsep=-2.0pt}

\usepackage{upref} 
\usepackage{hyperref}
\hypersetup{colorlinks=true,
linktocpage=true,
linkcolor=dblue,
citecolor=dgreen,
urlcolor=dred,
pdfencoding=auto,
hypertexnames=false}
\makeatletter
\g@addto@macro\th@plain{
\thm@headfont{\bfseries\sffamily}
\thm@notefont{}}
\g@addto@macro\th@definition{
\thm@headfont{\bfseries\sffamily}
\thm@notefont{}}
\g@addto@macro\th@remark{
\thm@headfont{\bfseries\sffamily}
\thm@notefont{}}
\makeatother
\theoremstyle{plain}
\newtheorem{theorem}{Theorem}[section]
\newtheorem{proposition}[theorem]{Proposition}
\newtheorem{corollary}[theorem]{Corollary}
\newtheorem{lemma}[theorem]{Lemma}

\theoremstyle{definition}
\newtheorem{definition}[theorem]{Definition}
\newtheorem{example}[theorem]{Example}

\theoremstyle{remark}
\newtheorem{remark}[theorem]{Remark}


\usepackage[extdef=true]{delimset}
\DeclareMathDelimiterSet{\scal}[2]{
\selectdelim[l]<{#1}
\mathpunct{}\selectdelim[p]|
{#2}\selectdelim[r]>}

\newcommand{\menge}[2]{\bigl\{{#1}\mid{#2}\bigr\}} 
\DeclareMathDelimiterSet{\Menge}[2]{\selectdelim[l]\{
{#1}\selectdelim[m]|{#2}\selectdelim[r]\}}

\newcommand{\RR}{\mathbb{R}}
\newcommand{\NN}{\mathbb{N}}
\newcommand{\HH}{\mathcal{H}}
\newcommand{\GG}{\mathcal{G}}
\newcommand{\KK}{\mathcal{K}}
\newcommand{\XX}{\mathcal{X}}
\newcommand{\BL}{\ensuremath{\EuScript B}\,}
\newcommand{\pinf}{{+}\infty}
\newcommand{\minf}{{-}\infty}
\newcommand{\zeroun}{\intv[o]{0}{1}}

\newcommand{\RXX}{\intv{\minf}{\pinf}}
\newcommand{\RX}{\intv[l]0{\minf}{\pinf}}
\newcommand{\RP}{\intv[r]0{0}{\pinf}}
\newcommand{\RPP}{\intv[o]0{0}{\pinf}}
\newcommand{\RPX}{\intv0{0}{\pinf}}

\newcommand{\emp}{\varnothing}

\newcommand{\pushfwd}{\ensuremath{\mathbin{\raisebox{-0.20ex}
{\mbox{\LARGE$\triangleright$}}}}}

\newcommand{\Id}{\mathrm{Id}}

\newcommand{\moyo}[2]{\leftindex[I]^{#2}{#1}}

\DeclareMathOperator{\ran}{ran}
\DeclareMathOperator{\cdom}{\overline{dom}}
\DeclareMathOperator{\dom}{dom}

\DeclareMathOperator{\gra}{gra}
\DeclareMathOperator{\zer}{zer}

\DeclareMathOperator{\sri}{sri}

\DeclareMathOperator{\proj}{proj}
\DeclareMathOperator{\haus}{haus}

\newcommand{\lev}[1]%
{{\ensuremath{{{{\operatorname{lev}}}_{\leq #1}}\,}}}

\DeclareFontFamily{U}{mathb}{}
\DeclareFontShape{U}{mathb}{m}{n}{<-5.5> mathb5 <5.5-6.5> mathb6 
<6.5-7.5> mathb7 <7.5-8.5> mathb8 <8.5-9.5> mathb9 <9.5-11> mathb10
<11-> mathb12}{}
\DeclareSymbolFont{mathb}{U}{mathb}{m}{n}
\DeclareFontSubstitution{U}{mathb}{m}{n}
\DeclareMathSymbol{\blackdiamond}{\mathbin}{mathb}{"0C}
\DeclareMathOperator{\rav}{{\mathsf{rav}}}

\renewcommand{\leq}{\leqslant}
\renewcommand{\geq}{\geqslant}
\newcommand{\exi}{\exists\,}

\newcommand{\proxc}[1]{\mathbin%
{\ensuremath{\overset{#1}{\diamond}}}}
\newcommand{\proxcc}[1]{\mathbin%
{\ensuremath{\overset{#1}{\blackdiamond}}}}
\newcommand{\Rm}[1]{\overset{\mathord{\diamond}}{\mathsf{M}}_{#1}}
\newcommand{\Rcm}[1]{\overset{\mathord{\blackdiamond}}{
\mathsf{M}}_{#1}}
\renewenvironment{abstract}{%
\vspace*{-0.50cm}
\small
\quotation%
\noindent%
{\normalfont\bfseries\sffamily
\nobreak\abstractname\ }%
}{%
\endquotation%
\medskip
}
\renewcommand{\abstractname}{Abstract.}

\newcommand\mscsname{MSC classification.}
\newenvironment{keywords}
{\renewcommand\abstractname{\keywordsname}\begin{abstract}}
{\end{abstract}}
\newenvironment{MSC}
{\renewcommand\abstractname{\mscsname}\begin{abstract}}
{\end{abstract}}
\usepackage[auth-sc]{authblk}
\newcommand{\email}[1]{\href{mailto:#1}{\nolinkurl{#1}}}
\renewcommand*\Affilfont{\normalfont\normalsize}
\newcommand\affilcr{\protect\\ \protect\Affilfont}
\makeatletter
\renewcommand\AB@affilsepx{\protect\\[0.5em]}
\makeatother

\author[]{Diego J. Cornejo}
\affil[]{North Carolina State University
\affilcr
Department of Mathematics
\affilcr
Raleigh, NC 27695, USA
\affilcr
\email{djcornej@ncsu.edu}
}

\begin{document}

\title{%
Parametrized Families of Resolvent Compositions
\thanks{%
This work was supported by the National
Science Foundation under grant CCF-2211123.
}}

\date{~}
\maketitle

\vspace{12mm}

\begin{abstract} 
This paper presents an in-depth analysis of a parametrized
version of the resolvent composition, an operation that combines a
set-valued operator and a linear operator. We provide new
properties and examples, and show that resolvent compositions can
be interpreted as parallel compositions of perturbed operators.
Additionally, we establish new monotonicity results, even in cases
when the initial operator is not monotone. Finally, we derive
asymptotic results regarding operator convergence, specifically
focusing on graph-convergence and the $\rho$-Hausdorff distance.
\end{abstract}

\begin{keywords}
Graph-convergence,
monotone operator,
parallel composition,
resolvent composition,
resolvent mixture,
$\rho$-Hausdorff distance, 
set-convergence.
\end{keywords}

\begin{MSC}
47H04,
47H05,
49J53
\end{MSC}

\newpage

\section{Introduction}
\label{sec:1}

Throughout, $\HH$ is a real Hilbert space with power set $2^\HH$,
identity operator $\Id_{\HH}$, scalar product
$\scal{\cdot}{\cdot}_{\HH}$, and associated norm $\|\cdot\|_{\HH}$.
In addition, $\GG$ is a real Hilbert space, the space of bounded
linear operators from $\HH$ to $\GG$ is denoted by $\BL(\HH,\GG)$,
and $\BL(\HH)=B(\HH,\HH)$. Let $L\in\BL(\HH,\GG)$. The adjoint of
$L$ is denoted by $L^*$, and the parallel composition of a
set-valued operator $B\colon\GG\to2^\GG$ by $L^*$ is the operator
from $\HH$ to $2^\HH$ given by
\begin{equation}
\label{e:parallel}
L^*\pushfwd B=\brk1{L^*\circ B^{-1}\circ L}^{-1}.
\end{equation}

We focus our attention on new methods to combine a set-valued
operator with a linear operator, which have recently been
introduced in \cite{Svva23}, where they have been studied only for
the case $\gamma=1$. 

\begin{definition}
\label{d:1}
Let $L\in\BL(\HH,\GG)$, let $B\colon\GG\to2^\GG$, and let
$\gamma\in\RPP$. The \emph{resolvent composition} of $B$ and $L$
with parameter $\gamma$ is the operator
$L\proxc{\gamma}B\colon\HH\to2^\HH$ given by 
\begin{equation}
\label{e:d1}
L\proxc{\gamma}B
=L^*\pushfwd(B+\gamma^{-1}\Id_{\GG})-\gamma^{-1}\Id_{\HH}
\end{equation}
and the \emph{resolvent cocomposition} of $B$ and $L$ with
parameter $\gamma$ is the operator
$L\proxcc{\gamma}B\colon\HH\to2^\HH$ given by 
$L\proxcc{\gamma}B=(L\proxc{1/\gamma}B^{-1})^{-1}$. Further,
$L\proxc{}B=L\proxc{1}B$ and $L\proxcc{}B=L\proxcc{1}B$.
\end{definition}

Resolvents of set-valued operators are essential in the numerical
solution of monotone inclusion problems
\cite{Acnu24,Tsp21,Ecks92,Glow89,Spin83,Tsen90}. A motivation for
studying the resolvent compositions of Definition~\ref{d:1} stems
from the fact that their resolvent can be computed explicitly,
unlike those of the standard composite operators $L^*\circ B\circ
L$ and $L^*\pushfwd B$, for which the resolvent is typically
intractable and requires dedicated numerical methods
\cite{Arta21,Joca09,Jmaa11}. Resolvent compositions also show up in
relaxations of inconsistent inclusion problems \cite{Jota24,Svva23}.
For instance, these new composite operators can be utilized to
model relaxations of convex feasibility and nonlinear
reconstruction problems \cite{Siim22}. Furthermore, the resolvent
composition of the subdifferential of a proper lower semicontinuous
convex function is the subdifferential of a function called the
{\em proximal composition} (see \cite{Jota24,Svva23,Eect24}), which
has been used in image recovery and machine learning applications
\cite{Eusi24}.

The goal of this paper is to present an in-depth analysis of the
parametrized compositions of Definition~\ref{d:1}. We provide
various new properties and examples, as well as connections with 
connection with the parallel composition $L^*\pushfwd B$ and the
standard composition $L^*\circ B\circ L$. Additionally, we
establish new monotonicity results, including the preservation of
monotonicity, strongly monotonicity, and maximally monotonicity,
and examine the Fitzpatrick function of the resolvent composition.
Finally, we investigate the convergence of the operators
$L\proxc{\gamma}B$ and $L\proxcc{\gamma}B$ as $\gamma$ varies, by
examining the graph-convergence and the $\rho$-Hausdorff distance
convergence.
 
The remainder of the paper is organized as follows. In
Section~\ref{sec:2}, we provide our notation and necessary
mathematical background. In Section~\ref{sec:3}, we investigate
new properties of the parametrized resolvent compositions and
present several examples. Section~\ref{sec:4} is devoted to study
the monotonicity of resolvent compositions. Finally,
Section~\ref{sec:5} provides convergence results for parametrized
resolvent compositions as the parameter varies.

\section{Notation and background}
\label{sec:2}

We first present our notation, which follows \cite{Livre1}.

Let $A\colon\HH\to2^\HH$ be a set-valued operator. The domain of
$A$ is $\dom A=\menge{x\in\HH}{Ax\neq\emp}$, the range of $A$ is
$\ran A=\menge{x^*\in\HH}{(\exi x\in\HH)\,x^*\in Ax}$, the graph of
$A$ is $\gra A=\menge{(x,x^*)\in\HH\times\HH}{x^*\in Ax}$, the
zeros of $A$ is $\zer A=\menge{x\in\HH}{0\in Ax}$, the inverse of
$A$ is the set-valued operator $A^{-1}$ with graph
$\menge{(x^*,x)\in\HH\times\HH}{x^*\in Ax}$.
The resolvent of $A$ is 
\begin{equation}
\label{e:res}
J_{A}=(\Id_{\HH}+A)^{-1}
\end{equation}
and the Yosida approximation of $A$ of index $\gamma\in\RPP$ is
\begin{equation}
\label{e:moyo}
\moyo{A}{\gamma}=\gamma^{-1}(\Id_{\HH}-J_{\gamma A})
=(A^{-1}+\gamma\Id_{\HH})^{-1}.
\end{equation}
In particular, when $\gamma=1$,
\begin{equation}
\label{e:23}
\Id_{\HH}-J_A=J_{A^{-1}}.
\end{equation}
The operator $A$ is monotone if
\begin{equation}
\brk1{\forall(x_1,x_1^*)\in\gra A}
\brk1{\forall(x_2,x_2^*)\in\gra A}\,\,
\scal{x_1-x_2}{x_1^*-x_2^*}_{\HH}\geq 0,
\end{equation}
$\alpha$-strongly monotone for some $\alpha\in\RPP$ if
$A-\alpha\Id_{\HH}$ is monotone, and maximally monotone if it is
monotone and there exists no monotone operator
$B\colon\HH\to2^\HH$ such that $\gra B$ properly
contains $\gra A$. 

Let $L\in\BL(\HH,\GG)$. If $\ran L$ is closed, the generalized (or
Moore--Penrose) inverse of $L$ is denoted by $L^\dagger$. Further,
$L$ is an isometry if $L^*\circ L=\Id_{\HH}$ and a coisometry if 
$L\circ L^*=\Id_{\GG}$. 

Let $D$ be a nonempty subset of $\HH$ and let $T\colon D\to\HH$.
Then $T$ is nonexpansive if
\begin{equation}
(\forall x\in D)(\forall y\in D)\quad
\|Tx-Ty\|_{\HH}\leq\|x-y\|_{\HH},
\end{equation}
and firmly nonexpansive if $2T-\Id_{\HH}$ is nonexpansive.

The Legendre conjugate of $f\colon\HH\to\RXX$ is the function
\begin{equation}
f^*\colon\HH\to\RXX\colon x^*\mapsto
\sup_{x\in\HH}\brk1{\scal{x}{x^*}_{\HH}-f(x)}
\end{equation}
and the Moreau envelope of $f\colon\HH\to\RX$ of parameter
$\gamma\in\RPP$ is
\begin{equation}
\moyo{f}{\gamma}\colon\HH\to\RXX\colon x\mapsto
\inf_{y\in\HH}\brk2{f(y)+\dfrac{1}{2\gamma}\|x-y\|_{\HH}^2}.
\end{equation}
A function $f\colon\HH\to\RX$ is proper if 
$\dom f=\menge{x\in\HH}{f(x)<\pinf}\neq\emp$. 
The set of proper lower semicontinuous convex functions from $\HH$
to $\RX$ is denoted by $\Gamma_0(\HH)$. The subdifferential of a
function  $f\in\Gamma_0(\HH)$ is
\begin{equation}
\partial f\colon\HH\to2^\HH\colon x\mapsto
\menge{x^*\in\HH}{
(\forall z\in\HH)\,\scal{z-x}{x^*}_{\HH}+f(x)\leq f(z)},
\end{equation}
and its inverse is
\begin{equation}
\label{e:29}
(\partial f)^{-1}=\partial f^*.
\end{equation}

Let $C$ be a nonempty convex subset of $\HH$. The normal cone of
$C$ is denoted by $N_C$ and the strong relative interior of $C$ is
denoted by $\sri C$. Additionally, if $C$ is closed, the projection
operator onto $C$ is denoted by $\proj_C$. Finally, the closed ball
with center $x\in\HH$ and radius $\rho\in\RPP$ is denoted by
$B(x;\rho)$.
 
The following facts will be used in the paper.

\begin{lemma}
\label{l:3}
Let $L\in\BL(\HH,\GG)$, let $A\colon\HH\to2^\HH$, let
$U\colon\GG\to\GG$, and let $\rho\in\RPP$. Then the following hold:
\begin{enumerate}
\item
\label{l:3i}
$\dom(L\pushfwd A)\subset L(\dom A)$.
\item
\label{l:3ii}
Let $\KK$ be a real Hilbert space and let $S\in\BL(\GG,\KK)$. Then
$S\pushfwd(L\pushfwd A)=(S\circ L)\pushfwd A$.
\item
\label{l:3iii}
$\rho(L\pushfwd A)=L\pushfwd(\rho A)$.
\item
\label{l:3iv}
$(L\pushfwd A)(\rho\Id_{\GG})=L\pushfwd(A(\rho\Id_{\HH}))$.
\item
\label{l:3v}
$L\pushfwd A+U=L\pushfwd(A+L^*\circ U\circ L)$.
\end{enumerate}
\end{lemma}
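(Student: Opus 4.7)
The plan is to reduce every identity to a graph-level computation. By the natural analog of \eqref{e:parallel} with $L$ in place of $L^*$, we have $L\pushfwd A=(L\circ A^{-1}\circ L^*)^{-1}$, so equivalently
\begin{equation*}
(y,y^*)\in\gra(L\pushfwd A)\iff(\exi x\in\HH)\;\;Lx=y\;\text{and}\;L^*y^*\in Ax.
\end{equation*}
Each of the five assertions then follows by routine bookkeeping from this characterization, and the plan is to carry out these verifications item by item.

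For \ref{l:3i}, I would observe that $\dom(L\pushfwd A)=\ran(L\circ A^{-1}\circ L^*)\subset L(\ran A^{-1})=L(\dom A)$. For \ref{l:3ii}, inverting gives $(L\pushfwd A)^{-1}=L\circ A^{-1}\circ L^*$, hence
\begin{equation*}
\bigl(S\pushfwd(L\pushfwd A)\bigr)^{-1}=S\circ L\circ A^{-1}\circ L^*\circ S^*=(S\circ L)\circ A^{-1}\circ(S\circ L)^*,
\end{equation*}
and a final inversion yields $(S\circ L)\pushfwd A$. Item \ref{l:3v} reduces to a direct substitution: the condition $y^*-Uy\in(L\pushfwd A)y$ expands, using $y=Lx$, to $L^*y^*-L^*ULx\in Ax$, i.e.,\ $L^*y^*\in(A+L^*\circ U\circ L)x$, which is precisely the membership criterion for $\gra(L\pushfwd(A+L^*\circ U\circ L))$.

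The only point that requires care is disentangling \ref{l:3iii} from \ref{l:3iv}: the former scales the values of $A$, while the latter scales its argument (and that of $L\pushfwd A$). For \ref{l:3iii}, I would use the equivalence $L^*y^*\in\rho Ax\iff L^*(\rho^{-1}y^*)\in Ax$, which converts $(y,y^*)\in\gra(L\pushfwd(\rho A))$ into $\rho^{-1}y^*\in(L\pushfwd A)y$, i.e.,\ $y^*\in\rho(L\pushfwd A)y$. For \ref{l:3iv}, the change of variables $x'=\rho x$ turns the defining pair of conditions for membership in $\gra(L\pushfwd(A(\rho\Id_\HH)))$ at $y$, namely $Lx=y$ and $L^*y^*\in A(\rho x)$, into $Lx'=\rho y$ and $L^*y^*\in Ax'$, which is the defining condition for $(L\pushfwd A)(\rho y)$. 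The main obstacle throughout is purely notational discipline: scalar multiplications, linear maps, and inversions must be kept on the correct side of every composition, but no nontrivial analysis is involved.
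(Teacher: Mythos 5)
Your proof is correct and follows essentially the same route as the paper: each item is unfolded from the definition $L\pushfwd A=\brk1{L\circ A^{-1}\circ L^*}^{-1}$ and verified by elementary graph-level or composition-level manipulations, exactly as in the paper's argument (your membership characterization of $\gra(L\pushfwd A)$ is the same device the paper uses for item \ref{l:3v}). The only cosmetic difference is that you prove item \ref{l:3ii} directly by inverting and regrouping, whereas the paper cites \cite[Proposition~25.42(ii)]{Livre1}; your one-line computation is precisely the one underlying that reference.
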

\begin{proof}
\ref{l:3i}: By \eqref{e:parallel},
$\dom(L\pushfwd A)=\ran(L\circ A^{-1}\circ L^*)\subset
L(\ran A^{-1})=L(\dom A)$.

\ref{l:3ii}: \cite[Proposition~25.42(ii)]{Livre1}

\ref{l:3iii}: Since $A^{-1}(\rho^{-1}\Id_{\HH})=(\rho A)^{-1}$, it
follows from \eqref{e:parallel} that $\rho(L\pushfwd A)
=(L\circ A^{-1}\circ L^*(\rho^{-1}\Id_{\GG}))^{-1}
=(L\circ(A^{-1}(\rho^{-1}\Id_{\HH}))\circ L^*)^{-1}
=(L\circ(\rho A)^{-1}\circ L^*)^{-1}
=L\pushfwd(\rho A)$.

\ref{l:3iii}: Since $\rho^{-1}A^{-1}=(A(\rho\Id_{\HH}))^{-1}$, it
follows from \eqref{e:parallel} that
$(L\pushfwd A)(\rho\Id_{\GG})
=(\rho^{-1}L\circ A^{-1}\circ L^*)^{-1}
=(L\circ(\rho^{-1}A^{-1})\circ L^*)^{-1}
=(L\circ(A(\rho\Id_{\HH}))^{-1}\circ L^*)^{-1}
=L\pushfwd(A(\rho\Id_{\HH}))$.

\ref{l:3v}: Let $x\in\HH$ and set $M=L\pushfwd A+U$.
It follows from \eqref{e:parallel} that
\begin{align}
x^*\in Mx&\Leftrightarrow 
x^*-Ux\in\brk1{L\circ A^{-1}\circ L^*}^{-1}x\nonumber\\
&\Leftrightarrow x\in L\brk2{A^{-1}\brk1{L^*x^*-L^*(Ux)}}
\nonumber\\
&\Leftrightarrow(\exi y\in\GG)\quad 
y\in A^{-1}\brk1{L^*x^*-L^*(Ux)}\quad\text{and}\quad x=Ly
\nonumber\\
&\Leftrightarrow(\exi y\in\GG)\quad 
L^*x^*\in Ay+L^*\brk1{U(Ly)}\quad\text{and}\quad x=Ly
\nonumber\\
&\Leftrightarrow(\exi y\in\GG)\quad y\in
\brk1{A+L^*\circ U\circ L}^{-1}(L^*x^*)\quad\text{and}\quad x=Ly
\nonumber\\
&\Leftrightarrow 
x\in L\brk2{\brk1{A+L^*\circ U\circ L}^{-1}(L^*x^*)}
\nonumber\\
&\Leftrightarrow x^*\in\brk1{L\pushfwd(A+L^*\circ U\circ L)}x,
\end{align}
which completes the proof.
\end{proof}

\section{Resolvent compositions}
\label{sec:3}

This section outlines the general properties of the parametrized
compositions of Definition~\ref{d:1}, which will be utilized
subsequently.

\begin{proposition}
\label{p:1}
Let $L\in\BL(\HH,\GG)$, let $B\colon\GG\to2^\GG$, let
$\gamma\in\RPP$, and let $\rho\in\RPP$. Then the following hold:
\begin{enumerate}
\item
\label{p:1i}
$\dom(L\proxc{\gamma}B)\subset L^*(\dom B)$.
\item
\label{p:1ii}
$\ran(L\proxcc{\gamma}B)\subset L^*(\ran B)$.
\item
\label{p:1iii}
$L\proxc{\gamma}B=(L\proxcc{1/\gamma}B^{-1})^{-1}$.
\item
\label{p:1iv}
$\rho(L\proxc{\gamma}B)=L\proxc{\gamma/\rho}(\rho B)$.
\item
\label{p:1v}
$(L\proxc{\gamma}B)(\rho\Id_{\HH})
=L\proxc{\gamma/\rho}(B(\rho\Id_{\GG}))$.
\item
\label{p:1vi}
$\rho(L\proxcc{\gamma}B)=L\proxcc{\gamma/\rho}(\rho B)$.
\item
\label{p:1vii}
$(L\proxcc{\gamma}B)(\rho\Id_{\HH})
=L\proxcc{\gamma/\rho}(B(\rho\Id_{\GG}))$.
\item
\label{p:1viii}
Let $z\in\HH$, let $w\in\GG$, and set 
$\tau_wB\colon x\mapsto B(x-w)$. Then the following hold:
\begin{enumerate}
\item
\label{p:1viiia}
$(L\proxc{\gamma}B)-z=L\proxc{\gamma}(B-Lz)$.
\item
\label{p:1viiib}
$\tau_z(L\proxcc{\gamma}B)=L\proxcc{\gamma}(\tau_{Lz}B)$.
\end{enumerate}
\item
\label{p:1ix}
Let $\KK$ be a real Hilbert space and $S\in\BL(\KK,\HH)$. Then the
following hold:
\begin{enumerate}
\item
\label{p:1ixa}
$S\proxc{\gamma}(L\proxc{\gamma}B)=(L\circ S)\proxc{\gamma}B$.
\item
\label{p:1ixb}
$S\proxcc{\gamma}(L\proxcc{\gamma}B)=(L\circ S)\proxcc{\gamma}B$.
\end{enumerate}
\item
\label{p:1x}
Set $\beta=\gamma/(1+\rho\gamma)$. Then
$L\proxc{\gamma}(B+\rho\Id_{\GG})=(L\proxc{\beta}B)+\rho\Id_{\HH}$.
\item
\label{p:1xi}
$\moyo{(L\proxcc{\gamma+\rho}B}{\rho})
=L\proxcc{\gamma}(\moyo{B}{\rho})$.
\item
\label{p:1xii}
$\moyo{(L\proxcc{\gamma}B)}{\gamma}
=L^*\circ(\moyo{B}{\gamma})\circ L$.
\item
\label{p:1xiii}
$\zer(L\proxcc{\gamma}B)=\zer(L^*\circ(\moyo{B}{\gamma})\circ L)$.
\end{enumerate}
\end{proposition}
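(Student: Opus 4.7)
The plan is to combine part \ref{p:1xii}, which identifies the Yosida approximation of $L\proxcc{\gamma}B$ at index $\gamma$ with $L^*\circ(\moyo{B}{\gamma})\circ L$, with the elementary observation that passing to the Yosida approximation at any positive index preserves the zero set. Once that observation is in hand, the result is immediate:
\begin{equation*}
\zer(L\proxcc{\gamma}B)
=\zer\moyo{(L\proxcc{\gamma}B)}{\gamma}
=\zer\brk1{L^*\circ(\moyo{B}{\gamma})\circ L},
\end{equation*}
where the second equality is exactly part \ref{p:1xii}.

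So the only real content is to verify $\zer\moyo{A}{\gamma}=\zer A$ for an arbitrary $A\colon\HH\to2^\HH$ and $\gamma\in\RPP$. Here I would proceed purely at the level of graphs, using the identity $\moyo{A}{\gamma}=(A^{-1}+\gamma\Id_\HH)^{-1}$ from \eqref{e:moyo}. Indeed, for $x\in\HH$,
\begin{equation*}
x\in\zer\moyo{A}{\gamma}
\;\Longleftrightarrow\;(x,0)\in\gra\moyo{A}{\gamma}
\;\Longleftrightarrow\;x\in\brk1{A^{-1}+\gamma\Id_\HH}(0)
\;\Longleftrightarrow\;x\in A^{-1}(0)
\;\Longleftrightarrow\;x\in\zer A.
\end{equation*}
No monotonicity of $A$ is needed since \eqref{e:moyo} is a purely algebraic identity between graphs.

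There is essentially no main obstacle; the only subtle point is making sure that the formula $\moyo{A}{\gamma}=(A^{-1}+\gamma\Id_\HH)^{-1}$ is applied to an operator that is not assumed to be monotone (recall that $L\proxcc{\gamma}B$ need not be monotone at this stage of the paper, since $B$ is arbitrary). As noted, this identity is already recorded in \eqref{e:moyo} of Section~\ref{sec:2} without any monotonicity hypothesis, so invoking it here is legitimate and the proof reduces to a one-line application of part \ref{p:1xii}.
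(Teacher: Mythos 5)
Your proposal addresses only item \ref{p:1xiii} of the proposition, taking \ref{p:1xii} (and implicitly everything before it) as given; as a proof of the full thirteen-part statement it is therefore incomplete, and I can only assess the one item you actually argue.

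For item \ref{p:1xiii} itself, your argument is correct and follows essentially the same route as the paper: both reduce the claim to item \ref{p:1xii} together with the observation that passing to the Yosida approximation preserves the zero set. The only difference is which half of \eqref{e:moyo} is used to verify $\zer\moyo{A}{\gamma}=\zer A$. The paper uses the resolvent form, writing $0\in\moyo{A}{\gamma}x\Leftrightarrow x\in J_{\gamma A}x\Leftrightarrow 0\in Ax$ via $\moyo{A}{\gamma}=\gamma^{-1}(\Id_{\HH}-J_{\gamma A})$ and \eqref{e:res}, whereas you use the inverse form $\moyo{A}{\gamma}=(A^{-1}+\gamma\Id_{\HH})^{-1}$ and compute $(x,0)\in\gra\moyo{A}{\gamma}\Leftrightarrow x\in A^{-1}(0)$. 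Both are purely graph-theoretic identities valid for arbitrary set-valued $A$, so your remark that no monotonicity of $L\proxcc{\gamma}B$ is needed is accurate and matches the paper's (implicit) stance; neither version buys anything over the other here. If your submission is meant to cover the whole proposition, you still owe proofs of items \ref{p:1i}--\ref{p:1xii}.
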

\begin{proof}
\ref{p:1i}: 
By Definition~\ref{d:1} and Lemma~\ref{l:3}\ref{l:3i},
$\dom(L\proxc{\gamma}B)\subset L^*(\dom(B+\gamma\Id_{\GG}))
=L^*(\dom B)$.

\ref{p:1ii}: By Definition~\ref{d:1} and \ref{p:1i}, 
$\ran(L\proxcc{\gamma}B)=\dom(L\proxc{1/\gamma}B^{-1})\subset
L^*(\dom B^{-1})=L^*(\ran B)$.

\ref{p:1iii}: This follows from Definition~\ref{d:1}.

\ref{p:1iv}: It follows from Definition~\ref{d:1} and
Lemma~\ref{l:3}\ref{l:3iii} that
\begin{align}
\rho\brk1{L\proxc{\gamma}B}
&=\rho\brk1{L^*\pushfwd(B+\gamma^{-1}\Id_{\GG})}
-\rho\gamma^{-1}\Id_{\HH}\nonumber\\
&=L^*\pushfwd(\rho B+\rho\gamma^{-1}\Id_{\GG})
-\rho\gamma^{-1}\Id_{\HH}\nonumber\\
&=L\proxc{\gamma/\rho}(\rho B).
\end{align}

\ref{p:1v}: By Definition~\ref{d:1} and Lemma~\ref{l:3}\ref{l:3iv},
we obtain
\begin{align}
\brk1{L\proxc{\gamma}B}(\rho\Id_{\HH})
&=\brk1{L^*\pushfwd(B+\gamma^{-1}\Id_{\GG})}(\rho\Id_{\HH})
-\rho\gamma^{-1}\Id_{\HH}\nonumber\\
&=L^*\pushfwd\brk1{B(\rho\Id_{\GG})+\rho\gamma^{-1}\Id_{\GG}}
-\rho\gamma^{-1}\Id_{\HH}\nonumber\\
&=L\proxc{\gamma/\rho}\brk1{B(\rho\Id_{\GG})}.
\end{align}

\ref{p:1vi}: By Definition~\ref{d:1} and \ref{p:1v}, 
\begin{equation}
\rho\brk1{L\proxcc{\gamma}B}
=\rho\brk1{L\proxc{1/\gamma}B^{-1}}^{-1}
=\brk2{\brk1{L\proxc{1/\gamma}B^{-1}}(\Id_{\HH}/\rho)}^{-1}
=\brk1{L\proxc{\rho/\gamma}(\rho B)^{-1}}^{-1}
=L\proxcc{\gamma/\rho}(\rho B).
\end{equation}

\ref{p:1vii}: By Definition~\ref{d:1} and \ref{p:1iv},
\begin{equation}
\brk1{L\proxcc{\gamma}B}(\rho\Id_{\HH})
=\brk1{L\proxc{1/\gamma}B^{-1}}^{-1}(\rho\Id_{\HH})
=\brk2{\rho^{-1}\brk1{L\proxc{1/\gamma}B^{-1}}}^{-1}
=\brk2{L\proxc{\rho/\gamma}\brk1{B(\rho\Id_{\GG})}^{-1}}^{-1}
=L\proxcc{\gamma/\rho}\brk1{B(\rho\Id_{\GG})}.
\end{equation}

\ref{p:1viiia}: Set $U\colon\HH\to\HH\colon x\mapsto z$. By
Definition~\ref{d:1} and Lemma~\ref{l:3}\ref{l:3v},
\begin{align}
\brk1{L\proxc{\gamma}B}-z&=L^*\pushfwd(B+\gamma^{-1}\Id_{\GG}
-L\circ U\circ L^*)-\gamma^{-1}\Id_{\HH}\nonumber\\
&=L^*\pushfwd(B-Lz+\gamma^{-1}\Id_{\GG})-\gamma^{-1}\Id_{\HH}
\nonumber\\
&=L\proxc{\gamma}(B-Lz).
\end{align}

\ref{p:1viiib}: Since $\tau_wB=(B^{-1}+w)^{-1}$, we combine
Definition~\ref{d:1} and \ref{p:1viiia} to derive
\begin{equation}
\tau_z\brk1{L\proxcc{\gamma}B}
=\brk2{\brk1{L\proxc{1/\gamma}B^{-1}}+z}^{-1}
=\brk1{L\proxc{1/\gamma}(B^{-1}+Lz)}^{-1}
=\brk1{L\proxc{1/\gamma}(\tau_{Lz}B)^{-1}}^{-1}
=L\proxcc{\gamma}(\tau_{Lz}B).
\end{equation}

\ref{p:1ixa}: By Definition~\ref{d:1} and
Lemma~\ref{l:3}\ref{l:3ii},
\begin{align}
(L\circ S)\proxc{\gamma}B
&=(L\circ S)^*\pushfwd(B+\gamma^{-1}\Id_{\GG})-\gamma^{-1}\Id_{\KK}
\nonumber\\
&=S^*\pushfwd\brk1{L^*\pushfwd(B+\gamma^{-1}\Id_{\GG})}
-\gamma^{-1}\Id_{\KK}\nonumber\\
&=S^*\pushfwd\brk1{(L\proxc{\gamma}B)+\gamma^{-1}\Id_{\HH}}
-\gamma^{-1}\Id_{\KK}\nonumber\\
&=S\proxc{\gamma}\brk1{L\proxc{\gamma}B}.
\end{align}

\ref{p:1ixb}: We combine Definition~\ref{d:1} and \ref{p:1ixa} to
obtain
\begin{equation}
S\proxcc{\gamma}\brk1{L\proxcc{\gamma}B}
=\brk2{S\proxc{1/\gamma}\brk1{L\proxc{1/\gamma}B^{-1}}}^{-1}
=\brk2{(L\circ S)\proxc{1/\gamma}B^{-1}}^{-1}
=(L\circ S)\proxcc{\gamma}B.
\end{equation}

\ref{p:1x}: Since $\beta^{-1}=\gamma^{-1}+\rho$, we deduce from
Definition~\ref{d:1} that
\begin{align}
L\proxc{\gamma}(B+\rho\Id_{\GG})
&=L^*\pushfwd(B+\rho\Id_{\GG}+\gamma^{-1}\Id_{\GG})
-\gamma^{-1}\Id_{\HH}\nonumber\\
&=L^*\pushfwd(B+\beta^{-1}\Id_{\GG})-\beta^{-1}\Id_{\HH}
+\rho\Id_{\HH}\nonumber\\
&=\brk1{L\proxc{\beta}B}+\rho\Id_{\HH}.
\end{align}

\ref{p:1xi}: By \eqref{e:moyo}, Definition~\ref{d:1}, and
\ref{p:1x}, 
\begin{align}
\moyo{\brk1{L\proxcc{\gamma+\rho}B}}{\rho}&=
\brk2{\brk1{L\proxc{1/(\gamma+\rho)}B^{-1}}+\rho\Id_{\HH}}^{-1}
\nonumber\\
&=\brk1{L\proxc{1/\gamma}(B^{-1}+\rho\Id_{\GG})}^{-1}
\nonumber\\
&=L\proxcc{\gamma}(B^{-1}+\rho\Id_{\GG})^{-1}\nonumber\\
&=L\proxcc{\gamma}(\moyo{B}{\rho}).
\end{align}

\ref{p:1xii}: It follows from \eqref{e:moyo} and 
Definition~\ref{d:1} that
\begin{equation}
\moyo{\brk1{L\proxcc{\gamma}B}}{\gamma}
=\brk2{\brk1{L\proxc{1/\gamma}B^{-1}}+\gamma\Id_{\HH}}^{-1}
=\brk1{L^*\pushfwd(B^{-1}+\gamma\Id_{\GG})}^{-1}
=L^*\circ(\moyo{B}{\gamma})\circ L.
\end{equation}

\ref{p:1xiii}: Set $A=L\proxcc{\gamma}B$. It follows from
\ref{p:1xii}, \eqref{e:moyo}, and \eqref{e:res} that
$0\in\zer(L^*\circ(\moyo{B}{\gamma})\circ L)\Leftrightarrow
0\in\zer(\moyo{A}{\gamma})\Leftrightarrow x\in J_{\gamma A}x
\Leftrightarrow 0\in Ax\Leftrightarrow x\in\zer A$.
\end{proof}

The following proposition shows that the resolvent of the operators
of Definition~\ref{d:1} can be computed explicitly.

\begin{proposition}
\label{p:2}
Let $L\in\BL(\HH,\GG)$, let $B\colon\GG\to2^\GG$, and let
$\gamma\in\RPP$. Then the following hold:
\begin{enumerate}
\item
\label{p:2i}
$J_{\gamma\brk1{L\proxc{\gamma}B}}=L^*\circ J_{\gamma B}\circ L$.
\item
\label{p:2ii}
$J_{\gamma\brk1{L\proxcc{\gamma}B}}=\Id_{\HH}-L^*\circ(\Id_{\GG}-
J_{\gamma B})\circ L$.
\end{enumerate}
\end{proposition}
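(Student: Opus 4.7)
The plan is to reduce both identities to the definition of the resolvent \eqref{e:res} and the definition of the parallel composition \eqref{e:parallel}, using the scaling rules of Lemma~\ref{l:3} together with the fundamental identity \eqref{e:23} applied to a scaled operator.

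For \ref{p:2i}, I would start from Definition~\ref{d:1} and compute
\begin{equation*}
\Id_{\HH}+\gamma\brk1{L\proxc{\gamma}B}
=\gamma\brk1{L^*\pushfwd(B+\gamma^{-1}\Id_{\GG})}.
\end{equation*}
Applying Lemma~\ref{l:3}\ref{l:3iii} with $\rho=\gamma$ pulls the scalar inside the parallel composition:
\begin{equation*}
\gamma\brk1{L^*\pushfwd(B+\gamma^{-1}\Id_{\GG})}
=L^*\pushfwd(\gamma B+\Id_{\GG}).
\end{equation*}
Inverting via \eqref{e:parallel} and recognizing $(\gamma B+\Id_{\GG})^{-1}=J_{\gamma B}$ yields $J_{\gamma(L\proxc{\gamma}B)}=L^*\circ J_{\gamma B}\circ L$.

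For \ref{p:2ii}, I would exploit Definition~\ref{d:1} to write $L\proxcc{\gamma}B=(L\proxc{1/\gamma}B^{-1})^{-1}$. The key observation is that the identity \eqref{e:23}, when applied to the operator $\gamma A$ in place of $A$, yields
\begin{equation*}
J_{\gamma A}+J_{\gamma^{-1}A^{-1}}=\Id_{\HH}.
\end{equation*}
Taking $A=L\proxcc{\gamma}B$, so that $A^{-1}=L\proxc{1/\gamma}B^{-1}$, and then invoking \ref{p:2i} with parameter $1/\gamma$ and operator $B^{-1}$ in place of $B$, gives
\begin{equation*}
J_{\gamma^{-1}(L\proxc{1/\gamma}B^{-1})}
=L^*\circ J_{\gamma^{-1}B^{-1}}\circ L
=L^*\circ(\Id_{\GG}-J_{\gamma B})\circ L,
\end{equation*}
where the last step uses \eqref{e:23} applied to $\gamma B$. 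Subtracting from $\Id_{\HH}$ delivers the claim.

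The main obstacle here is bookkeeping: one must carefully track how the parameter $\gamma$ interacts with the scaling of $B$ versus of the whole composition, and must apply \eqref{e:23} to the correctly scaled operator (namely $\gamma B$, not $B$). Once the identity $J_{\gamma A}+J_{\gamma^{-1}A^{-1}}=\Id_{\HH}$ is written down and combined with \ref{p:2i}, no further computation is needed; everything else is algebraic rearrangement via Definition~\ref{d:1} and Lemma~\ref{l:3}.
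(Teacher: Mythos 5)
Your argument for \ref{p:2i} is correct and is essentially the paper's proof: the paper packages the step $\gamma\brk1{L\proxc{\gamma}B}=L\proxc{}(\gamma B)$ as Proposition~\ref{p:1}\ref{p:1iv}, which is exactly your direct application of Lemma~\ref{l:3}\ref{l:3iii}.

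Your proof of \ref{p:2ii}, however, rests on a false identity. Applying \eqref{e:23} to $\gamma A$ gives $\Id_{\HH}-J_{\gamma A}=J_{(\gamma A)^{-1}}$, and $(\gamma A)^{-1}=A^{-1}\circ(\gamma^{-1}\Id_{\HH})$, which is \emph{not} the scalar multiple $\gamma^{-1}A^{-1}$ for a general set-valued $A$. Consequently the claimed identity $J_{\gamma A}+J_{\gamma^{-1}A^{-1}}=\Id_{\HH}$ fails when $\gamma\neq 1$: take $\HH=\RR$, $A=\partial\lvert\cdot\rvert$, $\gamma=2$, and $x=3$; then $J_{2A}x=\prox_{2\lvert\cdot\rvert}(3)=1$, while $\gamma^{-1}A^{-1}=\tfrac{1}{2}N_{[-1,1]}=N_{[-1,1]}$ (a normal cone is invariant under positive scaling), so $J_{\gamma^{-1}A^{-1}}x=\proj_{[-1,1]}3=1$, and $1+1\neq 3$. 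The same error occurs a second time in the step $J_{\gamma^{-1}B^{-1}}=\Id_{\GG}-J_{\gamma B}$. (The correct scaled version of \eqref{e:23} is the inverse resolvent identity $\Id_{\HH}=J_{\gamma A}+\gamma J_{\gamma^{-1}A^{-1}}\circ(\gamma^{-1}\Id_{\HH})$, consistent with \eqref{e:moyo}.) The two false equalities compensate each other only in the sense that the endpoints of your chain agree because the final identity happens to be true; since neither intermediate equality holds, the chain does not constitute a proof. The repair is to remove the scalar \emph{before} inverting, which is what the paper does: by Proposition~\ref{p:1}\ref{p:1vi}, $\gamma\brk1{L\proxcc{\gamma}B}=L\proxcc{}(\gamma B)=\brk1{L\proxc{}(\gamma B)^{-1}}^{-1}$, so that \eqref{e:23} applies verbatim (with no scaling) to give $J_{\gamma\brk1{L\proxcc{\gamma}B}}=\Id_{\HH}-J_{L\proxc{}(\gamma B)^{-1}}$; then part \ref{p:2i} with parameter $1$ and operator $(\gamma B)^{-1}$, followed by one more application of \eqref{e:23} to $\gamma B$, yields the claim.
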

\begin{proof}
\ref{p:2i}: By \eqref{e:res}, Proposition~\ref{p:1}\ref{p:1iv}, and
Definition~\ref{d:1}, 
\begin{equation}J_{\gamma\brk1{L\proxc{\gamma}B}}
=\brk2{\Id_{\HH}+\gamma\brk1{L\proxc{\gamma}B}}^{-1}
=\brk2{\Id_{\HH}+\brk1{L\proxc{}(\gamma B)}}^{-1}
=\brk1{L^*\pushfwd(\gamma B+\Id_{\GG})}^{-1}
=L^*\circ J_{\gamma B}\circ L.
\end{equation}

\ref{p:2ii}: By Proposition~\ref{p:1}\ref{p:1vi},
Definition~\ref{d:1}, \eqref{e:23}, and \ref{p:2i}, we obtain
\begin{equation}
J_{\gamma\brk1{L\proxcc{\gamma}B}}=J_{L\proxcc{}(\gamma B)}
=J_{\brk1{L\proxc{}(\gamma B)^{-1}}^{-1}}
=\Id_{\HH}-J_{L\proxc{}(\gamma B)^{-1}}
=\Id_{\HH}-L^*\circ(\Id_{\GG}-J_{\gamma B})\circ L,
\end{equation}
as claimed.
\end{proof}

The next result interprets resolvent compositions as parallel
compositions of perturbed operators.

\begin{proposition}
\label{p:3}
Let $L\in\BL(\HH,\GG)$, let $B\colon\GG\to2^\GG$, let
$\gamma\in\RPP$, and set $\Psi=\Id_{\GG}-L\circ L^*$.
Then the following hold:
\begin{enumerate}
\item
\label{p:3i}
$L\proxc{\gamma}B=L^*\pushfwd(B+\gamma^{-1}\Psi)$.
\item
\label{p:3ii}
$L\proxcc{\gamma}B=L^*\circ(B^{-1}+\gamma\Psi)^{-1}\circ L$.
\end{enumerate}
\end{proposition}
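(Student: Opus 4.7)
The plan is to reduce both statements to Definition~\ref{d:1} and then to invoke Lemma~\ref{l:3}\ref{l:3v} (perturbation of a parallel composition by an additive term) and equation \eqref{e:parallel} (the definition of $\pushfwd$ in terms of inverse and composition). No maximal monotonicity or surjectivity input is needed; the identities are purely algebraic.

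For \ref{p:3i}, I start from Definition~\ref{d:1}, which gives
\begin{equation*}
L\proxc{\gamma}B=L^*\pushfwd(B+\gamma^{-1}\Id_{\GG})-\gamma^{-1}\Id_{\HH},
\end{equation*}
so it suffices to absorb the additive term $-\gamma^{-1}\Id_{\HH}$ into the parallel composition. Applying Lemma~\ref{l:3}\ref{l:3v} with $L$ replaced by $L^*\in\BL(\GG,\HH)$ (whose adjoint is $L$), with $A=B+\gamma^{-1}\Id_{\GG}$, and with $U=-\gamma^{-1}\Id_{\HH}$, I obtain
\begin{equation*}
L^*\pushfwd(B+\gamma^{-1}\Id_{\GG})-\gamma^{-1}\Id_{\HH}
=L^*\pushfwd\brk1{B+\gamma^{-1}\Id_{\GG}-\gamma^{-1}L\circ L^*}
=L^*\pushfwd(B+\gamma^{-1}\Psi),
\end{equation*}
which is the claim.

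For \ref{p:3ii}, I invoke Proposition~\ref{p:1}\ref{p:1iii} to rewrite $L\proxcc{\gamma}B=(L\proxc{1/\gamma}B^{-1})^{-1}$ and then apply \ref{p:3i} with $B^{-1}$ and $1/\gamma$ in place of $B$ and $\gamma$ to get $L\proxc{1/\gamma}B^{-1}=L^*\pushfwd(B^{-1}+\gamma\Psi)$. Finally, by \eqref{e:parallel},
\begin{equation*}
\brk1{L^*\pushfwd(B^{-1}+\gamma\Psi)}^{-1}
=L^*\circ(B^{-1}+\gamma\Psi)^{-1}\circ L,
\end{equation*}
yielding the desired formula.

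The only subtle point is bookkeeping in the first step: one must be careful that when Lemma~\ref{l:3}\ref{l:3v} is applied to $L^*\pushfwd(\cdot\,)$, the perturbation expression becomes $L\circ U\circ L^*$ (not $L^*\circ U\circ L$), because the adjoint of $L^*$ is $L$. Once this is handled, both identities drop out immediately.
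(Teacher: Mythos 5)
Your proof is correct and follows essentially the same route as the paper: part \ref{p:3i} absorbs the term $-\gamma^{-1}\Id_{\HH}$ into the parallel composition via Lemma~\ref{l:3}\ref{l:3v} applied with $L^*$ in the role of the lemma's linear operator (so the perturbation appears as $L\circ U\circ L^*$, exactly the bookkeeping you flag), and part \ref{p:3ii} then inverts the resulting identity using \eqref{e:parallel}. The only cosmetic slip is that the identity $L\proxcc{\gamma}B=(L\proxc{1/\gamma}B^{-1})^{-1}$ is Definition~\ref{d:1} itself rather than Proposition~\ref{p:1}\ref{p:1iii} (which records the reverse relation), but nothing hinges on this.
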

\begin{proof}
\ref{p:3i}: Combining Definition~\ref{d:1} and
Lemma~\ref{l:3}\ref{l:3v}, we deduce that
\begin{align}
L\proxc{\gamma}B
&=L^*\pushfwd\brk1{B+\gamma^{-1}\Id_{\GG}}-\gamma^{-1}\Id_{\HH}
\nonumber\\
&=L^*\pushfwd\brk1{B+\gamma^{-1}\Id_{\GG}
+L\circ(-\gamma^{-1}\Id_{\HH})\circ L^*}\nonumber\\
&=L^*\pushfwd\brk1{B+\gamma^{-1}\Psi}.
\end{align}

\ref{p:3ii}: It follows from Definition~\ref{d:1} and \ref{p:3i}
that
\begin{equation}
L\proxcc{\gamma}B=\brk1{L\proxc{1/\gamma}B^{-1}}^{-1}
=\brk1{L^*\pushfwd(B^{-1}+\gamma\Psi)}^{-1}
=L^*\circ(B^{-1}+\gamma\Psi)^{-1}\circ L,
\end{equation}
as announced.
\end{proof}

We proceed to provide particular instances in which the standard,
parallel, and resolvent compositions coincide.

\begin{proposition}
\label{p:4}
Let $L\in\BL(\HH,\GG)$, let $B\colon\GG\to2^\GG$, and let
$\gamma\in\RPP$. Then the following hold:
\begin{enumerate}
\item
\label{p:4i}
Suppose that $L$ is an isometry. Then
$L\proxc{\gamma}B=L\proxcc{\gamma}B$.
\item
\label{p:4ii}
Suppose that $L$ is a coisometry. Then $L\proxc{\gamma}B
=L^*\pushfwd B$ and $L\proxcc{\gamma}B=L^*\circ B\circ L$.
\item
\label{p:4iii}
Suppose that $L$ is invertible with $L^{-1}=L^*$. Then 
$L\proxc{\gamma}B=L^*\pushfwd B=L^*\circ B\circ L
=L\proxcc{\gamma}B$.
\end{enumerate}
\end{proposition}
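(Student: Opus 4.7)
All three assertions will follow by plugging the stated isometry/coisometry hypothesis into the tools already available, namely the resolvent formulas of Proposition~\ref{p:2} and the parallel-composition identity of Proposition~\ref{p:3}. I will handle each item in turn, using \ref{p:4ii} to cover the common statement in \ref{p:4iii}.

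\textbf{Part \ref{p:4i}.} The plan is to compare the resolvents via Proposition~\ref{p:2}. Since $L^*\!\circ L=\Id_{\HH}$, one has $L^*\!\circ\Id_{\GG}\circ L=\Id_{\HH}$, and therefore
\begin{equation*}
\Id_{\HH}-L^*\circ(\Id_{\GG}-J_{\gamma B})\circ L
=L^*\circ J_{\gamma B}\circ L.
\end{equation*}
By Proposition~\ref{p:2}\ref{p:2i}--\ref{p:2ii}, this is exactly
$J_{\gamma(L\proxcc{\gamma}B)}=J_{\gamma(L\proxc{\gamma}B)}$. Since resolvents determine their operators (via $A=\gamma^{-1}(J_{\gamma A}^{-1}-\Id_{\HH})$), the two operators coincide.

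\textbf{Part \ref{p:4ii}.} Here $L\circ L^*=\Id_{\GG}$, so the operator $\Psi=\Id_{\GG}-L\circ L^*$ of Proposition~\ref{p:3} vanishes. Proposition~\ref{p:3}\ref{p:3i} then immediately gives
\begin{equation*}
L\proxc{\gamma}B=L^*\pushfwd(B+\gamma^{-1}\Psi)=L^*\pushfwd B,
\end{equation*}
while Proposition~\ref{p:3}\ref{p:3ii} yields
\begin{equation*}
L\proxcc{\gamma}B=L^*\circ(B^{-1}+\gamma\Psi)^{-1}\circ L
=L^*\circ B\circ L.
\end{equation*}
(As a cross-check, the first identity can also be derived directly from Definition~\ref{d:1} and Lemma~\ref{l:3}\ref{l:3v} applied with $U=\gamma^{-1}\Id_{\HH}$: the term $L\circ U\circ L^*=\gamma^{-1}\Id_{\GG}$ cancels the $\gamma^{-1}\Id_{\GG}$ perturbation.)

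\textbf{Part \ref{p:4iii}.} The hypothesis $L^{-1}=L^*$ means that $L$ is simultaneously an isometry and a coisometry. Combining \ref{p:4i} with the two identities of \ref{p:4ii} immediately yields the chain
\begin{equation*}
L\proxc{\gamma}B=L^*\pushfwd B,\qquad
L\proxcc{\gamma}B=L^*\circ B\circ L,\qquad
L\proxc{\gamma}B=L\proxcc{\gamma}B,
\end{equation*}
so all four operators coincide.

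\textbf{Main obstacle.} There is no real obstacle: the earlier structural results (Proposition~\ref{p:2} and Proposition~\ref{p:3}) have been crafted so that both parts reduce to a one-line substitution. The only point that requires a moment of care is the resolvent identity in \ref{p:4i}, where one must recognize that the subtracted term $\Id_{\GG}-J_{\gamma B}$ re-expands after conjugation by $L^*,L$ and precisely cancels the leading $\Id_{\HH}$ thanks to the isometry property.
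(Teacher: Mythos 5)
Your proposal is correct and follows essentially the same route as the paper: part \ref{p:4i} by comparing the two resolvent formulas of Proposition~\ref{p:2} under $L^*\circ L=\Id_{\HH}$ and recovering the operators via $\gamma A=J_{\gamma A}^{-1}-\Id_{\HH}$, part \ref{p:4ii} by observing that $\Psi=\Id_{\GG}-L\circ L^*$ vanishes in Proposition~\ref{p:3}, and part \ref{p:4iii} by combining the two. No gaps.
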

\begin{proof}
\ref{p:4i}: Since $L^*\circ L=\Id_{\HH}$, we deduce from
Proposition~\ref{p:2} and \eqref{e:res} that
\begin{equation}
\gamma(L\proxcc{\gamma}B)
=\brk2{J_{\gamma\brk1{L\proxcc{\gamma}B}}}^{-1}-\Id_{\HH}
=\brk1{L^*\circ J_{\gamma B}\circ L}^{-1}-\Id_{\HH}
=\brk2{J_{\gamma\brk1{L\proxc{\gamma}B}}}^{-1}-\Id_{\HH}
=\gamma(L\proxc{\gamma}B).
\end{equation} 

\ref{p:4ii}: Since $L$ is a coisometry, 
$\Psi=\Id_{\GG}-L\circ L^*=0$. Therefore, we derive from
Proposition~\ref{p:3} that $L\proxc{\gamma}B=L^*\pushfwd B$ and
$L\proxcc{\gamma}B=L^*\circ B\circ L$.

\ref{p:4iii}: This follows from \ref{p:4i} and \ref{p:4ii}.
\end{proof}

\begin{corollary}
Let $L\in\BL(\HH,\GG)$, let $B\colon\GG\to2^\GG$, and let
$\gamma\in\RPP$. Let $\KK$ be a real Hilbert space and suppose that
$M\in\BL(\KK,\HH)$ and $S\in\BL(\GG)$ are coisometries. Then the
following hold:
\begin{enumerate}
\item
$M^*\pushfwd(L\proxc{\gamma}B)=(L\circ M)\proxc{\gamma}B$.
\item
$M^*\circ(L\proxcc{\gamma}B)\circ M=(L\circ M)\proxcc{\gamma}B$.
\item
$L\proxc{\gamma}(S^*\pushfwd B)=(S\circ L)\proxc{\gamma}B$.
\item
$L\proxcc{\gamma}(S^*\circ B\circ S)=(S\circ L)\proxcc{\gamma}B$
\end{enumerate}
\end{corollary}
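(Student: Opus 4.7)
The plan is to combine the associativity identities of Proposition~\ref{p:1}\ref{p:1ixa}--\ref{p:1ixb} with the coisometry reductions supplied by Proposition~\ref{p:4}\ref{p:4ii}. Since $M$ and $S$ are coisometries, Proposition~\ref{p:4}\ref{p:4ii} yields, for any compatible set-valued operator $A$, the four identities
$M\proxc{\gamma}A=M^{*}\pushfwd A$,
$M\proxcc{\gamma}A=M^{*}\circ A\circ M$,
$S\proxc{\gamma}A=S^{*}\pushfwd A$, and
$S\proxcc{\gamma}A=S^{*}\circ A\circ S$.
Each of the four claims then reduces to a single application of Proposition~\ref{p:1}\ref{p:1ixa} or Proposition~\ref{p:1}\ref{p:1ixb}, followed by a rewriting of one factor via these identities.

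For item~(i), I would specialize Proposition~\ref{p:1}\ref{p:1ixa} by taking the ``$S$'' of that statement to be $M\in\BL(\KK,\HH)$, obtaining $M\proxc{\gamma}(L\proxc{\gamma}B)=(L\circ M)\proxc{\gamma}B$, and then replace the outer $M\proxc{\gamma}$ by $M^{*}\pushfwd$. Item~(ii) is handled analogously from Proposition~\ref{p:1}\ref{p:1ixb}: the left-hand side $M\proxcc{\gamma}(L\proxcc{\gamma}B)$ equals $(L\circ M)\proxcc{\gamma}B$ by that result, and the coisometry identity rewrites it as $M^{*}\circ(L\proxcc{\gamma}B)\circ M$.

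For items~(iii) and~(iv) the roles swap: here the coisometry $S\in\BL(\GG)$ plays the role of the \emph{outer} operator in Proposition~\ref{p:1}\ref{p:1ix}, while $L\in\BL(\HH,\GG)$ plays the role of the inner one (so we apply Proposition~\ref{p:1}\ref{p:1ix} with $\KK$ replaced by $\HH$, its ``$L$'' replaced by $S$, and its ``$S$'' replaced by $L$). This gives $L\proxc{\gamma}(S\proxc{\gamma}B)=(S\circ L)\proxc{\gamma}B$ and $L\proxcc{\gamma}(S\proxcc{\gamma}B)=(S\circ L)\proxcc{\gamma}B$; substituting $S\proxc{\gamma}B=S^{*}\pushfwd B$ and $S\proxcc{\gamma}B=S^{*}\circ B\circ S$ inside the argument of the outer resolvent (co)composition produces the announced equalities.

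No genuine obstacle is expected. The only step requiring care is the bookkeeping of which operator plays which role in the associativity formula: the coisometry $M$ is the \emph{inner} linear map in items~(i)--(ii), whereas the coisometry $S$ is the \emph{outer} linear map in items~(iii)--(iv). Once this is pinned down, everything reduces to two line-long substitutions.
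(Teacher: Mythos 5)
Your proposal is correct and is exactly the paper's argument: the paper's proof reads ``Combine Proposition~\ref{p:4}\ref{p:4ii} and Proposition~\ref{p:1}\ref{p:1ix},'' and your role assignments (the coisometry $M$ as the inner factor of $L\circ M$ in items (i)--(ii), the coisometry $S$ as the outer factor of $S\circ L$ in items (iii)--(iv)) are precisely the intended instantiations. Nothing is missing.
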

\begin{proof}
Combine Proposition~\ref{p:4}\ref{p:4ii} and
Proposition~\ref{p:1}\ref{p:1ix}.
\end{proof}

Now, we present several examples of resolvent compositions and
cocompositions, starting with the representation of the Yosida
approximation as one such composition.

\begin{example}
\label{ex:yosi}
Let $A\colon\HH\to2^\HH$ and $\gamma\in\RPP$. Set $L=\Id_{\HH}/2$
and $B=2A(2\Id_{\HH})$. Then
$L\proxcc{\gamma/3}B=\moyo{A}{\gamma}$. 
\end{example}
\begin{proof}
It follows from Proposition~\ref{p:3}\ref{p:3ii} that
$L\proxcc{4\gamma/3}A
=(1/2)(A^{-1}+\gamma\Id_{\HH})^{-1}(\Id_{\HH}/2)
=(1/2)\moyo{A}{\gamma}(\Id_{\HH}/2)$. Therefore,
Proposition~\ref{p:1}\ref{p:1vi}--\ref{p:1vii} implies that
$\moyo{A}{\gamma}=2(L\proxcc{4\gamma/3}A)(2\Id_{\HH})
=L\proxcc{\gamma/3}B$, as claimed.
\end{proof}

\begin{example}
Let $V$ be a closed vector subspace of $\HH$, $L\in\BL(\HH,\GG)$,
$B\colon\GG\to2^\GG$, and $\gamma\in\RPP$. Suppose that $L$ is
surjective and that $L^*\circ L=\proj_V$. Then
$L\proxc{\gamma}B=L^*\pushfwd B$,
$L\proxcc{\gamma}B=L^*\circ B\circ L$, and
$\moyo{(L^*\circ B\circ L)}{\gamma}
=L^*\circ(\moyo{B}{\gamma})\circ L$.
\end{example}
\begin{proof}
In this case, $L$ is a coisometry. Therefore,
Proposition~\ref{p:4}\ref{p:4ii} implies that
$L\proxc{\gamma}B=L^*\pushfwd B$ and $L\proxcc{\gamma}B=L^*\circ
B\circ L$. Further, we use Proposition~\ref{p:1}\ref{p:1xii} to
deduce that $\moyo{(L^*\circ B\circ L)}{\gamma}
=\moyo{(L\proxcc{\gamma}B)}{\gamma}
=L^*\circ(\moyo{B}{\gamma})\circ L$, which completes the proof.
\end{proof}

\begin{example}
\label{ex:37}
Let $S\in\BL(\HH,\GG)$, let $A\colon\GG\to2^\GG$, let 
$\gamma\in\RPP$, and let $\mu\in\RPP$. Suppose that 
$S\circ S^*=\mu\Id_{\GG}$. Then the
following hold:
\begin{enumerate}
\item
\label{ex:37i}
Set $L=S/\sqrt{\mu}$ and $B=\sqrt{\mu}A(\sqrt{\mu}\Id_{\GG})$. Then
$L\proxcc{\gamma}B=S^*\circ A\circ S$.
\item
\label{ex:37ii}
$J_{\gamma S^*\circ A\circ S}
=\Id_{\HH}-\mu^{-1}S^*\circ(\Id_{\GG}-J_{\mu\gamma A})\circ S$.
\end{enumerate}
\end{example}
\begin{proof}
\ref{ex:37i}: In this case, $L$ is a coisometry and
Proposition~\ref{p:4}\ref{p:4ii} yields
$L\proxcc{\gamma}B=L^*\circ B\circ L=S^*\circ A\circ S$.

\ref{ex:37ii}: By \ref{ex:37i}, $S^*\circ A\circ S
=L\proxcc{\gamma}B$. Therefore, the result follows from 
Proposition~\ref{p:2}\ref{p:2ii} and basic resolvent calculus.
\end{proof}

\begin{example}
\label{ex:39}
Suppose that $L\in\BL(\HH,\GG)$ satisfies $\|L\|<1$, let
$B\colon\GG\to2^\GG$, and let $\gamma\in\RPP$. Let 
$\XX=\HH\oplus\GG$, set
$\Psi=\Id_{\GG}-L\circ L^*$, and set
$L_\Psi\colon\XX\to\GG\colon(x,y)\mapsto Lx+\Psi^{1/2}y$. Then
\begin{equation}
\label{e:ex39a}
L_\Psi\proxcc{\gamma}B\colon\XX\to2^\XX\colon(x,y)\mapsto
\brk2{L^*\brk1{B(Lx+\Psi^{1/2}y)}}\times
\brk2{\Psi^{1/2}\brk1{B(Lx+\Psi^{1/2}y)}}.
\end{equation}
\end{example}
\begin{proof}
Note that $\Psi$ is self-adjoint and that 
\begin{equation}
(\forall y\in\GG)\quad\scal{\Psi y}{y}_{\GG}
=\|y\|^2_{\GG}-\|L^*y\|^2_{\HH}\geq(1-\|L\|^2)\|y\|^2_{\GG}.
\end{equation}
Thus, $\Psi$ is strongly monotone and $\Psi^{1/2}$ is well
defined. Further, since
$L_\Psi^*\colon\GG\to\XX\colon y\mapsto(L^*y,\Psi^{1/2}y)$, we
deduce that
\begin{equation}
(\forall y\in\GG)\quad
L_\Psi(L_\Psi^*y)=L_\Psi(L^*y,\Psi^{1/2}y)=L(L^*y)+\Psi y=y.
\end{equation}
Therefore, $L_\Psi$ is a coisometry, and it follows from 
Proposition~\ref{p:4}\ref{p:4ii} that
$L_\Psi\proxcc{\gamma}B=L_\Psi^*\circ B\circ L_\Psi$, which
establishes \eqref{e:ex39a}.
\end{proof}

\begin{example}[resolvent mixture]
\label{ex:mix}
Let $0\neq p\in\NN$ and let $\gamma\in\RPP$. For every
$k\in\{1,\ldots,p\}$, let $\GG_k$ be a real Hilbert space, let
$L_k\in\BL(\HH,\GG_k)$, let $B_k\colon\GG_k\to2^{\GG_k}$, and let
$\alpha_k\in\RPP$.  Let $\GG=\bigoplus_{k=1}^p\GG_k$, and set
\begin{equation}
\label{e:mix3}
L\colon\HH\to\GG\colon x\mapsto
\brk1{\sqrt{\alpha_1}L_1x,\ldots,\sqrt{\alpha_p}L_px}
\end{equation}
and
\begin{equation}
\label{e:mix4}
B\colon\GG\to2^\GG\colon(y_k)_{1\leq k\leq p}\mapsto
\brk2{\sqrt{\alpha_1}B_1\brk1{y_1/\sqrt{\alpha_1}}}\times\cdots
\times\brk2{\sqrt{\alpha_p}B_p\brk1{y_p/\sqrt{\alpha_p}}}.
\end{equation}
Then Proposition~\ref{p:2} yields
\begin{equation}
\label{e:mix1}
J_{\gamma\brk1{L\proxc{\gamma}B}}
=\sum_{k=1}^p\alpha_kL_k^*\circ J_{\gamma B_k}\circ L_k
\end{equation}
and
\begin{equation}
\label{e:mix2}
J_{\gamma\brk1{L\proxcc{\gamma}B}}
=\Id_{\HH}-\sum_{k=1}^p\alpha_kL_k^*\circ
(\Id_{\GG_k}-J_{\gamma B_k})\circ L_k.
\end{equation}
The operators
\begin{equation}
\Rm{\gamma}(B_k,L_k)_{1\leq k\leq p}
=L\proxc{\gamma}B
=\brk3{\sum_{k=1}^p\alpha_kL_k^*\circ\brk1{B_k
+\gamma^{-1}\Id_{\GG_k}}^{-1}\circ L_k}^{-1}-\gamma^{-1}\Id_{\HH}
\end{equation}
and
\begin{equation}
\Rcm{\gamma}(B_k,L_k)_{1\leq k\leq p}
=L\proxcc{\gamma}B
=\brk4{\brk3{\sum_{k=1}^p\alpha_kL_k^*\circ\brk1{B_k^{-1}
+\gamma\Id_{\GG_k}}^{-1}\circ L_k}^{-1}
-\gamma\Id_{\HH}}^{-1}
\end{equation}
are called the {\em resolvent mixture} and {\em comixture},
respectively, and were initially introduced in
\cite[Example~3.4]{Svva23} (see also \cite{Jota24} for further
developments).
\end{example}

\begin{example}[resolvent average]
\label{ex:rave}
In the context of Example~\ref{ex:mix}, suppose that 
$\sum_{k=1}^p\alpha_k=1$ and that, for every
$k\in\{1,\ldots,p\}$, $\GG_k=\HH$ and $L_k=\Id_{\HH}$. 
Since $L^*\colon\GG\to\HH\colon(y_k)_{1\leq k\leq p}\mapsto
\sum_{k=1}^p\sqrt{\alpha_k}y_k$, the linear operator $L$ is an
isometry. Thus, Proposition~\ref{p:4}\ref{p:4i} yields
$L\proxc{\gamma}B=L\proxcc{\gamma}B$. This operator is called the
\emph{resolvent average} of $(B_k)_{1\leq k\leq p}$ and
$(\alpha_k)_{1\leq k\leq p}$, denoted by
$\rav_{\gamma}(B_k,\alpha_k)_{1\leq k\leq p}$, which has been
studied in \cite{Baus16,Livre1,Jota24,Svva23,Wang11}, namely,
\begin{equation}
L\proxc{\gamma}B
=\brk3{\sum_{k=1}^p\alpha_k
\brk1{B_k+\gamma^{-1}\Id_{\HH}}^{-1}}^{-1}-\gamma^{-1}\Id_{\HH}
=\rav_{\gamma}(B_k,\alpha_k)_{1\leq k\leq p}.
\end{equation}
In addition, \eqref{e:mix1} yields
$J_{\gamma\rav_{\gamma}(B_k,\alpha_k)_{1\leq k\leq p}}
=\sum_{k=1}^p\alpha_kJ_{\gamma B_k}$.
\end{example}

\section{Monotonicity of resolvent compositions}
\label{sec:4}

We leverage the results of Section~\ref{sec:3} to establish
conditions that ensure the monotonicity of resolvent compositions.

\begin{proposition}
\label{p:20}
Suppose that $0\neq L\in\BL(\HH,\GG)$, let $B\colon\GG\to2^\GG$,
let $\gamma\in\RPP$, let $\alpha\in\intv[r]{-1/\gamma}{\pinf}$, and
set $\beta=(\alpha+\gamma^{-1})\|L\|^{-2}-\gamma^{-1}$. Suppose
that $B-\alpha\Id_{\GG}$ is monotone. Then
$L\proxc{\gamma}B-\beta\Id_{\HH}$ is monotone. 
\end{proposition}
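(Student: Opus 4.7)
The plan is to reduce the claim to a general monotonicity-preservation property of the parallel composition and then invoke Definition~\ref{d:1}. Set $\eta=\alpha+\gamma^{-1}$, which is nonnegative by the hypothesis $\alpha\in\intv[r]{-1/\gamma}{\pinf}$, and let $C=B+\gamma^{-1}\Id_{\GG}=(B-\alpha\Id_{\GG})+\eta\Id_{\GG}$. Then $C$ is $\eta$-strongly monotone (just monotone if $\eta=0$). By Definition~\ref{d:1}, and since $\beta+\gamma^{-1}=\eta\|L\|^{-2}$, we have $L\proxc{\gamma}B-\beta\Id_{\HH}=L^*\pushfwd C-\eta\|L\|^{-2}\Id_{\HH}$. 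Hence it suffices to establish the following: if $C\colon\GG\to 2^\GG$ is $\eta$-strongly monotone with $\eta\in\RP$, then $L^*\pushfwd C$ is $\eta\|L\|^{-2}$-strongly monotone.

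The key step is to prove this auxiliary assertion by unfolding the definition $L^*\pushfwd C=(L\circ C^{-1}\circ L^*)^{-1}$ from \eqref{e:parallel}. Take $(x_1,x_1^*),(x_2,x_2^*)\in\gra(L^*\pushfwd C)$. By the definition, there exist $y_1,y_2\in\GG$ with $x_i=L^*y_i$ and $Lx_i^*\in Cy_i$ for $i\in\{1,2\}$. Using self-adjointness and the strong monotonicity of $C$,
\begin{equation*}
\scal{x_1-x_2}{x_1^*-x_2^*}_{\HH}=\scal{L^*(y_1-y_2)}{x_1^*-x_2^*}_{\HH}=\scal{y_1-y_2}{Lx_1^*-Lx_2^*}_{\GG}\geq\eta\|y_1-y_2\|_{\GG}^2.
\end{equation*}
Since $\|L^*\|=\|L\|\neq 0$, we have $\|x_1-x_2\|_{\HH}=\|L^*(y_1-y_2)\|_{\HH}\leq\|L\|\,\|y_1-y_2\|_{\GG}$, so $\|y_1-y_2\|_{\GG}^2\geq\|L\|^{-2}\|x_1-x_2\|_{\HH}^2$. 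Combining these yields $\scal{x_1-x_2}{x_1^*-x_2^*}_{\HH}\geq\eta\|L\|^{-2}\|x_1-x_2\|_{\HH}^2$, which is the desired strong monotonicity.

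Applying this auxiliary result to $C=B+\gamma^{-1}\Id_{\GG}$ shows that $L^*\pushfwd C-\eta\|L\|^{-2}\Id_{\HH}$ is monotone, and the reduction in the first paragraph completes the proof. The main obstacle is the middle step: one must be careful to track that the pre-image $y_i\in\GG$ arising from the parallel-composition unfolding is precisely the vector against which the strong monotonicity inequality for $C$ applies, and then convert the lower bound in terms of $\|y_1-y_2\|_{\GG}$ into one in terms of $\|x_1-x_2\|_{\HH}$ via the elementary norm estimate for $L^*$. The degenerate case $\eta=0$ is automatic, since then the statement reduces to the standard fact that $L^*\pushfwd$ preserves monotonicity.
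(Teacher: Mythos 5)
Your proof is correct, but it follows a genuinely different route from the paper's. The paper first rewrites $L\proxc{\gamma}B=L^*\pushfwd(B+\gamma^{-1}\Psi)$ with $\Psi=\Id_{\GG}-L\circ L^*$ (Proposition~\ref{p:3}\ref{p:3i}) and then uses Lemma~\ref{l:3}\ref{l:3v} to absorb the perturbation $-\beta\Id_{\HH}$ \emph{inside} the parallel composition, obtaining $L\proxc{\gamma}B-\beta\Id_{\HH}=L^*\pushfwd\mathcal{M}$ with $\mathcal{M}=(B-\alpha\Id_{\GG})+(\alpha+\gamma^{-1})\|L\|^{-2}(\|L\|^2\Id_{\GG}-L\circ L^*)$; monotonicity of $\mathcal{M}$ is then read off as a sum of monotone operators, and the conclusion follows from the cited fact that parallel composition preserves monotonicity. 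You instead keep the shift \emph{outside}: you observe that $C=B+\gamma^{-1}\Id_{\GG}$ is $\eta$-strongly monotone with $\eta=\alpha+\gamma^{-1}\geq 0$ and prove from scratch, by unfolding the graph of the parallel composition, the quantitative transfer statement that $L^*\pushfwd C$ is $\eta\|L\|^{-2}$-strongly monotone. Your computation is sound: the conditions $x_i=L^*y_i$ and $Lx_i^*\in Cy_i$ are exactly what \eqref{e:parallel} gives, the inner-product manipulation is correct, and the step multiplying $\|y_1-y_2\|_{\GG}^2\geq\|L\|^{-2}\|x_1-x_2\|_{\HH}^2$ by $\eta$ is legitimate precisely because $\eta\geq 0$. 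Your approach is more elementary and self-contained (it dispenses with both Lemma~\ref{l:3}\ref{l:3v} and the external reference for monotonicity preservation, and yields a reusable strong-monotonicity lemma for $L^*\pushfwd$ along the way), whereas the paper's is more modular and consistent with its calculus of perturbed parallel compositions. One small slip: you wrote $L^*\pushfwd C=(L\circ C^{-1}\circ L^*)^{-1}$, which has $L$ and $L^*$ in the wrong order relative to \eqref{e:parallel}; the correct expression is $(L^*\circ C^{-1}\circ L)^{-1}$, and the graph conditions you actually use are the ones coming from this correct formula, so the argument is unaffected.
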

\begin{proof}
Set $\Psi=\Id_{\GG}-L\circ L^*$ and 
$\mathcal{M}=(B-\alpha\Id_{\GG})
+(\alpha+\gamma^{-1})\|L\|^{-2}(\|L\|^2\Id_{\GG}-L\circ L^*)$.
It follows from Proposition~\ref{p:3}\ref{p:3i} and
Lemma~\ref{l:3}\ref{l:3v} that
\begin{align}
\label{e:p20}
\brk1{L\proxc{\gamma}B}-\beta\Id_{\HH}
&=L^*\pushfwd\brk1{B+\gamma^{-1}\Psi}-\beta\Id_{\HH}
\nonumber\\
&=L^*\pushfwd\brk1{B+\gamma^{-1}\Psi
+L\circ(-\beta\Id_{\HH})\circ L^*}\nonumber\\
&=L^*\pushfwd\brk1{
(B-\alpha\Id_{\GG})
+(\alpha+\gamma^{-1})\|L\|^{-2}(\|L\|^2\Id_{\GG}-L\circ L^*)}
\nonumber\\
&=L^*\pushfwd\mathcal{M}.
\end{align}
Since $\alpha+\gamma^{-1}\geq 0$ and the operators 
$B-\alpha\Id_{\GG}$ and $\|L\|^2\Id_{\GG}-L\circ L^*$ are
monotone, \cite[Propositions~20.10]{Livre1} implies
that $\mathcal{M}$ is monotone. Therefore, the assertion follows
from \eqref{e:p20} and \cite[Proposition~25.41(ii)]{Livre1}.
\end{proof}

\begin{corollary}
\label{c:42}
Suppose that $L\in\BL(\HH,\GG)$ satisfies $0<\|L\|\leq 1$, let
$B\colon\GG\to2^\GG$, let $\gamma\in\RPP$, let $\alpha\in\RPP$, and
set $\beta=(\alpha+\gamma^{-1})\|L\|^{-2}-\gamma^{-1}$. Suppose
that $B$ is $\alpha$-strongly monotone. Then $L\proxc{\gamma}B$ is
$\beta$-strongly monotone.
\end{corollary}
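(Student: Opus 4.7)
The corollary is essentially a direct specialization of Proposition~\ref{p:20}, so the plan is short and the only real work is verifying hypotheses.

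First, I would observe that the hypothesis $\alpha\in\RPP$ and $\gamma\in\RPP$ immediately gives $\alpha>0>-1/\gamma$, so in particular $\alpha\in\intv[r]{-1/\gamma}{\pinf}$, which is the admissible range of the preceding proposition. Next, the assumption that $B$ is $\alpha$-strongly monotone is, by the definition recalled in Section~\ref{sec:2}, precisely the statement that $B-\alpha\Id_{\GG}$ is monotone. Thus the hypotheses of Proposition~\ref{p:20} are satisfied with the very same $\alpha$ and $\beta=(\alpha+\gamma^{-1})\|L\|^{-2}-\gamma^{-1}$, and that proposition yields monotonicity of $(L\proxc{\gamma}B)-\beta\Id_{\HH}$.

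It remains to translate this into \emph{strong} monotonicity, which by definition requires $\beta>0$. Here I would use the remaining hypothesis $0<\|L\|\leq 1$: this gives $\|L\|^{-2}\geq 1$, and since $\alpha+\gamma^{-1}>0$ we get
\begin{equation}
\beta=(\alpha+\gamma^{-1})\|L\|^{-2}-\gamma^{-1}\geq(\alpha+\gamma^{-1})-\gamma^{-1}=\alpha>0.
\end{equation}
Hence $\beta\in\RPP$, and monotonicity of $(L\proxc{\gamma}B)-\beta\Id_{\HH}$ is exactly $\beta$-strong monotonicity of $L\proxc{\gamma}B$, completing the proof.

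There is no real obstacle here; the only subtlety worth flagging is the role of the bound $\|L\|\leq 1$, whose sole function is to guarantee positivity of $\beta$ (the monotonicity conclusion of Proposition~\ref{p:20} holds without it, but it could produce a nonpositive $\beta$ that fails to certify strong monotonicity).
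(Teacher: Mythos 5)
Your proof is correct and follows essentially the same route as the paper: both apply Proposition~\ref{p:20} with the given $\alpha$ and then verify $\beta>0$ from $\|L\|^{-2}\geq 1$ (the paper writes $\beta=\alpha\|L\|^{-2}+\gamma^{-1}(\|L\|^{-2}-1)>0$, which is the same observation as your bound $\beta\geq\alpha>0$). Your explicit remarks on the admissible range of $\alpha$ and on the role of $\|L\|\leq 1$ are accurate but add nothing beyond the paper's two-line argument.
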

\begin{proof}
Since $\beta=\alpha\|L\|^{-2}+\gamma^{-1}(\|L\|^{-2}-1)>0$, the
conclusion follows from Proposition~\ref{p:20}.
\end{proof}

\begin{corollary}
\label{c:43}
Suppose that $L\in\BL(\HH,\GG)$ satisfies $0<\|L\|<1$, let
$B\colon\GG\to2^\GG$ be monotone, let $\gamma\in\RPP$, and set
$\beta=\gamma^{-1}(\|L\|^{-2}-1)$. Then $L\proxc{\gamma}B$ is
$\beta$-strongly monotone.
\end{corollary}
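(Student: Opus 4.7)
The plan is to apply Proposition~\ref{p:20} directly with $\alpha=0$. Since $B$ is monotone, the operator $B-0\cdot\Id_{\GG}=B$ is monotone, so the hypothesis of Proposition~\ref{p:20} is satisfied with this choice of $\alpha$. I need to verify the admissibility condition $\alpha\in\intv[r]{-1/\gamma}{\pinf}$, which holds trivially since $0>-1/\gamma$ because $\gamma>0$.

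Substituting $\alpha=0$ into the expression $\beta=(\alpha+\gamma^{-1})\|L\|^{-2}-\gamma^{-1}$ from Proposition~\ref{p:20} gives
\begin{equation}
\beta=\gamma^{-1}\|L\|^{-2}-\gamma^{-1}=\gamma^{-1}(\|L\|^{-2}-1),
\end{equation}
which matches the $\beta$ in the statement. Proposition~\ref{p:20} then yields that $L\proxc{\gamma}B-\beta\Id_{\HH}$ is monotone.

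It remains only to check that this $\beta$ is strictly positive, which is where the hypothesis $\|L\|<1$ is used: since $0<\|L\|<1$, we have $\|L\|^{-2}>1$, hence $\beta>0$. By definition of $\beta$-strong monotonicity, $L\proxc{\gamma}B$ is therefore $\beta$-strongly monotone. There is no real obstacle here; the result is a direct specialization of the previous proposition to the monotone (rather than strongly monotone) setting, where the strong monotonicity of the composed operator is purchased entirely from the contractivity $\|L\|<1$ of the linear part.
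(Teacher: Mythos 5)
Your proof is correct and follows exactly the paper's own route: the paper also obtains Corollary~\ref{c:43} by specializing Proposition~\ref{p:20} to $\alpha=0$, with the positivity of $\beta=\gamma^{-1}(\|L\|^{-2}-1)$ coming from $\|L\|<1$. Your additional checks of the admissibility of $\alpha=0$ and the sign of $\beta$ are exactly the (implicit) content of the paper's one-line argument.
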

\begin{proof}
This follows from Proposition~\ref{p:20} when $\alpha=0$.
\end{proof}

\begin{proposition}
\label{p:10}
Let $L\in\BL(\HH,\GG)$, $B\colon\GG\to2^\GG$, and
$\gamma\in\RPP$. Suppose that $B+\gamma^{-1}(\Id_{\GG}-L\circ L^*)$
is monotone. Then the following hold:
\begin{enumerate}
\item
\label{p:10i}
$L\proxc{\gamma}B$ is monotone.
\item
\label{p:10ii}
Suppose that $\ran L\subset\ran(\Id_{\GG}+\gamma B)$. Then
$L\proxc{\gamma}B$ is maximally monotone.
\end{enumerate}
\end{proposition}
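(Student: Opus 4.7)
The plan is to exploit Proposition~\ref{p:3}\ref{p:3i}, which rewrites
\begin{equation*}
L\proxc{\gamma}B=L^*\pushfwd(B+\gamma^{-1}\Psi),\qquad\Psi=\Id_{\GG}-L\circ L^*,
\end{equation*}
so that the standing hypothesis becomes precisely a monotonicity assumption on the operator to which the parallel composition is applied.

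For \ref{p:10i}, I would invoke the fact that parallel composition by a bounded linear operator preserves monotonicity (see, e.g., \cite[Proposition~25.41]{Livre1}): directly from the definition $L^*\pushfwd M=(L^*\circ M^{-1}\circ L)^{-1}$ one checks that if $M$ is monotone then so is $L^*\circ M^{-1}\circ L$, since for any selections $u_i\in M^{-1}(Lx_i)$ one has $\scal{L^*u_1-L^*u_2}{x_1-x_2}_{\HH}=\scal{u_1-u_2}{Lx_1-Lx_2}_{\GG}\geq 0$; hence its inverse is also monotone. Applying this with $M=B+\gamma^{-1}\Psi$ yields \ref{p:10i} at once.

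For \ref{p:10ii}, once monotonicity is in hand from \ref{p:10i}, I would invoke Minty's theorem: it suffices to check the range condition $\ran(\Id_{\HH}+\gamma(L\proxc{\gamma}B))=\HH$, equivalently $\dom J_{\gamma(L\proxc{\gamma}B)}=\HH$. Proposition~\ref{p:2}\ref{p:2i}, interpreted as an identity of set-valued operators, gives $J_{\gamma(L\proxc{\gamma}B)}=L^*\circ J_{\gamma B}\circ L$. Since $\dom J_{\gamma B}=\ran(\Id_{\GG}+\gamma B)$, the assumed inclusion $\ran L\subset\ran(\Id_{\GG}+\gamma B)$ makes $J_{\gamma B}\circ L$ defined on all of $\HH$, so that $\dom J_{\gamma(L\proxc{\gamma}B)}=\HH$. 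Minty's theorem then delivers maximal monotonicity.

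I do not anticipate a serious obstacle: both parts reduce to previously established identities (Propositions~\ref{p:2} and~\ref{p:3}) combined with standard facts about monotone operators, namely preservation under parallel composition and Minty's characterization of maximal monotonicity via the range condition. The only mild subtlety is reading Proposition~\ref{p:2}\ref{p:2i} set-theoretically so that the domain computation is legitimate prior to any appeal to single-valuedness; this is precisely where the hypothesis in \ref{p:10ii} is tailored to force surjectivity of the resolvent.
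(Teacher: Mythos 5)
Your proposal is correct and follows essentially the same route as the paper: part \ref{p:10i} via Proposition~\ref{p:3}\ref{p:3i} and the preservation of monotonicity under parallel composition, and part \ref{p:10ii} via the resolvent identity of Proposition~\ref{p:2}\ref{p:2i} together with Minty's theorem. The only difference is cosmetic---you verify the parallel-composition monotonicity fact by hand where the paper cites \cite[Proposition~25.41(ii)]{Livre1}, and the paper makes explicit the harmless passage through $\gamma(L\proxc{\gamma}B)$ before applying Minty.
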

\begin{proof}
Set $\Psi=\Id_{\GG}-L\circ L^*$ and recall from
Proposition~\ref{p:3}\ref{p:3i} that
$L\proxc{\gamma}B=L^*\pushfwd(B+\gamma^{-1}\Psi)$.

\ref{p:10i}: Since $B+\gamma^{-1}\Psi$ is monotone, we deduce from
\cite[Proposition~25.41(ii)]{Livre1} that $L\proxc{\gamma}B$ is
monotone.

\ref{p:10ii}: Since monotonicity is preserved under multiplication
by positive scalars, $\gamma(L\proxc{\gamma}B)$ is monotone by
\ref{p:10i}. Further, by assumption, 
$\ran L\subset\ran(\Id_{\GG}+\gamma B)
=\dom(\Id_{\GG}+\gamma B)^{-1}=\dom J_{\gamma B}$. Therefore,
$\dom(L^*\circ J_{\gamma B}\circ L)=\dom(J_{\gamma B}\circ L)=\HH$,
and it follows from Proposition~\ref{p:2}\ref{p:2i} that
$\ran(\Id_{\HH}+\gamma(L\proxc{\gamma}B))
=\dom J_{\gamma(L\proxc{\gamma}B)}=\HH$. Altogether, we deduce from
\cite[Theorem~21.1 (Minty)]{Livre1} that $\gamma(L\proxc{\gamma}B)$
is maximally monotone. Hence, $L\proxc{\gamma}B$ is maximally
monotone.
\end{proof}

The following result recovers \cite[Proposition~4.4(i)--(ii) and
Theorem~4.5(i)--(ii)]{Svva23}, which were proven when $\gamma=1$
using distinct approaches. 

\begin{corollary}
\label{c:15}
Suppose that $L\in\BL(\HH,\GG)$ satisfies $\|L\|\leq 1$, let
$B\colon\GG\to2^\GG$ be monotone, and let $\gamma\in\RPP$.
Then the following hold:
\begin{enumerate}
\item
\label{c:15i}
$L\proxc{\gamma}B$ and
$L\proxcc{\gamma}B$ are monotone.
\item
\label{c:15ii}
Suppose that $B$ is maximally monotone. Then $L\proxc{\gamma}B$ and
$L\proxcc{\gamma}B$ are maximally monotone.
\end{enumerate}
\end{corollary}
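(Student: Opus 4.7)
The plan is to derive both parts directly from Proposition~\ref{p:10} and the duality identity $L\proxcc{\gamma}B=(L\proxc{1/\gamma}B^{-1})^{-1}$ from Definition~\ref{d:1}, exploiting the observation that monotonicity and maximal monotonicity are preserved under inversion of the graph.

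First I would verify the hypothesis of Proposition~\ref{p:10}. Set $\Psi=\Id_{\GG}-L\circ L^*$. Since $\|L\|\leq 1$, the self-adjoint operator $\Psi$ is positive, hence monotone as a single-valued linear operator. Because $B$ is monotone and the sum of two monotone operators is monotone, $B+\gamma^{-1}\Psi$ is monotone. Therefore Proposition~\ref{p:10}\ref{p:10i} yields immediately that $L\proxc{\gamma}B$ is monotone. For the cocomposition, note that $B^{-1}$ is monotone (monotonicity is symmetric in the two coordinates of the graph), so applying the same argument to $B^{-1}$ with parameter $1/\gamma$ shows that $L\proxc{1/\gamma}B^{-1}$ is monotone; inverting the graph then gives \ref{c:15i}.

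For \ref{c:15ii}, I would invoke Proposition~\ref{p:10}\ref{p:10ii}. Its extra hypothesis is $\ran L\subset\ran(\Id_{\GG}+\gamma B)$, and when $B$ is maximally monotone Minty's theorem gives $\ran(\Id_{\GG}+\gamma B)=\GG$, so the inclusion is automatic. Hence $L\proxc{\gamma}B$ is maximally monotone. For $L\proxcc{\gamma}B$, since $B^{-1}$ is likewise maximally monotone, the same argument applied with $B$ replaced by $B^{-1}$ and $\gamma$ by $1/\gamma$ shows that $L\proxc{1/\gamma}B^{-1}$ is maximally monotone; since maximal monotonicity is invariant under inversion, the conclusion follows.

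There is no real obstacle here; the only subtlety is keeping track of the parameter swap $\gamma\leftrightarrow 1/\gamma$ when passing from the resolvent composition of $B$ to the resolvent cocomposition via $B^{-1}$, and noting that the condition $\|L\|\leq 1$ is symmetric enough to apply unchanged in both cases. The argument is essentially a short reduction to Proposition~\ref{p:10}.
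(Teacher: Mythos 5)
Your proposal is correct and follows essentially the same route as the paper: verify that $B+\gamma^{-1}(\Id_{\GG}-L\circ L^*)$ is monotone using $\|L\|\leq 1$, invoke Proposition~\ref{p:10} (with Minty's theorem supplying the range condition in the maximal case), and treat the cocomposition by applying the same argument to $B^{-1}$ with parameter $1/\gamma$ and inverting. No substantive differences to report.
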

\begin{proof} Set $\Psi=\Id_{\GG}-L\circ L^*$. Since $\|L\|\leq 1$,
$\Psi$ is monotone. Thus, \cite[Proposition~20.10]{Livre1} implies
that $B+\gamma^{-1}\Psi$ and $B^{-1}$ are monotone.

\ref{c:15i}: By Proposition~\ref{p:10}\ref{p:10i},
$L\proxc{\gamma}B$ and $L\proxc{1/\gamma}B^{-1}$ are monotone.
Therefore, we combine Definition~\ref{d:1} and
\cite[Proposition~20.10]{Livre1} to deduce that
$L\proxcc{\gamma}B=(L\proxc{1/\gamma}B^{-1})^{-1}$ is monotone.

\ref{c:15ii}: By \cite[Proposition~20.22]{Livre1}, $\gamma B$ and
$B^{-1}$ are maximally monotone. Therefore,
\cite[Theorem~21.1]{Livre1} yields 
$\ran(\Id_{\GG}+\gamma B)=\GG$, and, by
Proposition~\ref{p:10}\ref{p:10ii}, $L\proxc{\gamma}B$ is maximally
monotone. Similarly, $L\proxc{1/\gamma}B^{-1}$ is maximally
monotone, and it follows from \cite[Proposition~20.22]{Livre1} that
$L\proxcc{\gamma}B=(L\proxc{1/\gamma}B^{-1})^{-1}$ is maximally
monotone as well.
\end{proof}

Next, we state several instantiations of resolvent compositions
which are monotone.

\begin{example}
\label{ex:mix2}
Let $0\neq p\in\NN$ and let $\gamma\in\RPP$. For every
$k\in\{1,\ldots,p\}$, let $\GG_k$ be a real Hilbert space, let
$L_k\in\BL(\HH,\GG_k)$, let $B_k\colon\GG_k\to2^{\GG_k}$ be
maximally monotone, and let $\alpha_k\in\RPP$. Suppose that
$\sum_{k=1}^p\alpha_k\|L_k\|^2\leq 1$. Then
$\Rm{\gamma}(B_k,L_k)_{1\leq k\leq p}$ and
$\Rcm{\gamma}(B_k,L_k)_{1\leq k\leq p}$ are maximally monotone.
\end{example}
\begin{proof}
Define $L$ as in \eqref{e:mix3} and $B$ as in \eqref{e:mix4}. Thus,
the assumption $\sum_{k=1}^p\alpha_k\|L_k\|^2\leq 1$ implies that
$\|L\|\leq 1$, and, by \cite[Proposition~23.18]{Livre1}, $B$ is
maximally monotone. Since 
$\Rm{\gamma}(B_k,L_k)_{1\leq k\leq p}=L\proxc{\gamma}B$ and
$\Rcm{\gamma}(B_k,L_k)_{1\leq k\leq p}=L\proxcc{\gamma}B$, the
conclusion follows from Corollary~\ref{c:15}\ref{c:15ii}.
\end{proof}

\begin{example}
\label{ex:dr}
Let $A_1\colon\HH\to2^\HH$ and $A_2\colon\HH\to2^\HH$ be maximally
monotone, let $\GG=\HH\oplus\HH$, set $L\colon\HH\to\GG\colon
x\mapsto(x,-x)$, and set
\begin{equation}
B\colon\GG\to2^\GG\colon(x,y)\mapsto
\brk1{A_1x}\times\brk1{A_2^{-1}y-2x}.
\end{equation}
Then $L\proxc{}B$ is maximally monotone and
\begin{equation}
\label{e:dr}
J_{L\proxc{}{B}}
=\dfrac{1}{2}\Id_{\HH}
+\dfrac{1}{2}\brk1{2J_{A_2}-\Id_{\HH}}\circ
\brk1{2J_{A_1}-\Id_{\HH}}.
\end{equation}
In other words, the resolvent of $L\proxc{}B$ is the 
{\em Douglas--Rachford splitting operator} of $A_2$ and $A_1$ 
(see \cite{Livre1,Ecks92}).
\end{example}
\begin{proof}
Set $\Psi=\Id_{\GG}-L\circ L^*$,
$\mathcal{M}\colon\GG\to2^\GG\colon(x,y)\mapsto 
(A_1x)\times(A_2^{-1}y)$, and $S\colon\GG\to\GG\colon(x,y)\mapsto
(y,-x)$. Note that $\Psi\colon(x,y)\mapsto(y,x)$, that $S$ is
monotone, and that, since $A_1$ and $A_2$ are monotone,
$\mathcal{M}$ is monotone as well. Thus,
\cite[Proposition~20.10]{Livre1} implies that
$B+\Psi=\mathcal{M}+S$ is monotone. Further, it is straightforward
verify that
\begin{equation}
\label{e:exdr}
\brk1{\forall(x,y)\in\GG}\quad
J_B(x,y)=\brk1{J_{A_1}x,J_{A_2^{-1}}(y+2J_{A_1}x)}.
\end{equation}
Therefore, it follows from Proposition~\ref{p:10}\ref{p:10ii} that
$L\proxc{}B$ is maximally monotone. In addition, since
$L^*\colon\GG\to\HH\colon(x,y)\mapsto x-y$,
Proposition~\ref{p:2}\ref{p:2i}, \eqref{e:exdr}, and \eqref{e:23}
yield
\begin{align}
(\forall x\in\HH)\quad
J_{L\proxc{}B}x&=L^*\brk1{J_B(x,-x)}\nonumber\\
&=L^*\brk1{J_{A_1}x,J_{A_2^{-1}}(2J_{A_1}x-x)}\nonumber\\
&=J_{A_1}x-J_{A_2^{-1}}(2J_{A_1}x-x)\nonumber\\
&=J_{A_1}x-\brk1{2J_{A_1}x-x-J_{A_2}(2J_{A_1}x-x)}\nonumber\\
&=\dfrac{1}{2}x+\dfrac{1}{2}\brk1{2J_{A_2}-\Id_{\HH}}(
2J_{A_1}x-x),
\end{align}
which establishes \eqref{e:dr}.
\end{proof}

\begin{remark}
Consider the setting of Example~\ref{ex:dr}. Then the operator $B$
is not necessarily monotone (take $A_1=0$ and $A_2=N_{\{0\}}$)
and the norm of the linear operator is greater than $1$
($\|L\|=\sqrt{2}$). As a result, the resolvent composition 
$L\proxc{}B$ can be maximally monotone, even in cases when $B$ is
not monotone and $\|L\|>1$.
\end{remark}

The following example recovers the operator used in \cite{Mali23}
for finding a zero in the sum of $p\geq 2$ maximally monotone
operators. For the sake of simplicity, we represent operators using
matrices. 

\begin{example}
Let $\gamma\in\RPP$, let $\KK$ be a real Hilbert space, let
$p\in\NN\setminus\{0,1\}$, and let $(A_k)_{1\leq k\leq p}$ be a
family of maximally monotone operators in $\KK$. Let
$\HH=\bigoplus_{k=1}^{p-1}\KK$, let $\GG=\bigoplus_{k=1}^p\KK$,
set
\begin{equation}
L=\begin{bmatrix}
\Id_{\KK}\\
-\Id_{\KK}&\Id_{\KK}\\
&\ddots&\ddots&\\
&&-\Id_{\KK}&\Id_{\KK}\\
&&&-\Id_{\KK}
\end{bmatrix}\in\BL(\HH,\GG),
\end{equation}
and set
\begin{equation}
B=\gamma\begin{bmatrix}
A_1\\
-\Id_{\KK}&A_2\\
&\ddots&\ddots\\
&&-\Id_{\KK}&A_{p-1}\\
-\Id_{\KK}&&&-\Id_{\KK}&A_p
\end{bmatrix}\colon\GG\to2^\GG.
\end{equation}
Then, for every $\boldsymbol{z}=(z_k)_{1\leq k\leq p-1}\in\HH$, 
\begin{equation}
\label{e:300} 
J_{\gamma\brk2{(\gamma L)\proxcc{\gamma}B^{-1}}}\boldsymbol{z}
=\boldsymbol{z}+\gamma^2
\begin{bmatrix}
x_2-x_1\\
x_3-x_2\\
\vdots\\
x_p-x_{p-1}
\end{bmatrix},
\quad\text{where}\quad
\begin{cases}
x_1=J_{A_1}z_1,\\
(\forall k\in\{2,\ldots,p-1\})\,\,
x_k=J_{A_k}(z_k+x_{k-1}-z_{k-1}),\\
x_p=J_{A_p}(x_1+x_{p-1}-z_{p-1}).
\end{cases}
\end{equation}
\end{example}
\begin{proof}
Let $\boldsymbol{z}=(z_k)_{1\leq k\leq p-1}\in\HH$. It is
straightforward verify that $J_{\gamma^{-1}B}(L\boldsymbol{z})
=(x_k)_{1\leq k\leq p}=\boldsymbol{x}$. On the other hand, recall
from \cite[Proposition~23.20]{Livre1} that
$\Id_{\GG}-J_{\gamma B^{-1}}
=\gamma J_{\gamma^{-1}B}(\gamma^{-1}\Id_{\GG})$. Further, note that
\begin{equation}
\label{e:310}
L^*\boldsymbol{x}=
\begin{bmatrix}
x_1-x_2\\
x_2-x_3\\
\vdots\\
x_{p-1}-x_p
\end{bmatrix}
\end{equation}
 Altogether, we
deduce from Proposition~\ref{p:2}\ref{p:2ii} and \eqref{e:310} that
\begin{equation}
J_{\gamma\brk2{(\gamma L)\proxcc{\gamma}B^{-1}}}\boldsymbol{z}
=\boldsymbol{z}-(\gamma L)^*\brk1{
\gamma J_{\gamma^{-1}B}(L\boldsymbol{z})}
=\boldsymbol{z}-\gamma^2L^*\boldsymbol{x}
=\boldsymbol{z}+\gamma^2
\begin{bmatrix}
x_2-x_1\\
x_3-x_2\\
\vdots\\
x_p-x_{p-1},
\end{bmatrix},
\end{equation}
which establishes \eqref{e:300}.
\end{proof}

\begin{remark}
As shown in \cite[Lemma~3]{Mali23}, the operator given in
\eqref{e:300} is $\gamma^2$-averaged when $\gamma\in\zeroun$.
\end{remark}

\begin{example}
\label{ex:1}
Let $L\in\BL(\HH,\GG)$ be such that $\ran L$ is closed, let  
$B\colon\GG\to2^\GG$ be maximally monotone, and let
$\gamma\in\RPP$. Let $\XX$ be the real Hilbert space obtained by
endowing $\HH$ with the scalar product
\begin{equation}
\label{e:ex1}
\scal{\cdot}{\cdot}_{\XX}\colon\XX\times\XX\to\RR\colon(x,y)
\mapsto\scal{Lx}{Ly}_{\GG}+\scal{x}{\proj_{\ker L}y}_{\HH},
\end{equation}
and set $L_{\XX}\colon\XX\to\GG\colon x\mapsto Lx$. Then the
following hold: 
\begin{enumerate}
\item
\label{ex:1i}
$L_{\XX}\proxc{\gamma}B$ and $L_{\XX}\proxcc{\gamma}B$ are
maximally monotone.
\item
\label{ex:1ii}
$J_{\gamma\brk1{L_{\XX}\proxc{\gamma}B}}
=L^\dagger\circ J_{\gamma B}\circ L$.
\item
\label{ex:1iii}
$J_{\gamma\brk1{L_{\XX}\proxcc{\gamma}B}}
=\Id_{\XX}-L^\dagger\circ(\Id_{\GG}-J_{\gamma B})\circ L$.
\item
\label{ex:1iv}
Suppose that $\ker L=\{0\}$. Then 
$L_{\XX}\proxc{\gamma}B=L_{\XX}\proxcc{\gamma}B$.
\item
\label{ex:1v}
Suppose that $\ran L=\GG$. Then
$L_{\XX}\proxc{\gamma}B=L^\dagger\pushfwd B$ and
$L_{\XX}\proxcc{\gamma}B=L^\dagger\circ B\circ L$.
\end{enumerate}
\end{example}
\begin{proof}
Let $x\in\HH$ and $y\in\GG$. It follows from \eqref{e:ex1} that
$\|L_{\XX}\|\leq 1$ since
\begin{equation}
(\forall z\in\HH)\quad
\|L_{\XX}z\|_{\GG}^2=\|Lz\|_{\GG}^2
\leq\|Lz\|_{\GG}^2+\|\proj_{\ker L}z\|_{\HH}^2
=\|z\|_{\XX}^2.
\end{equation}
Further, the identities $L^*y=L^*(L(L^\dagger y))$
and $L^\dagger y\in(\ker L)^\bot$
\cite[Proposition~3.30(i)]{Livre1} imply that
\begin{align}
\scal{L_{\XX}x}{y}_{\GG}&=\scal{x}{L^*y}_{\HH}\nonumber\\
&=\scal{x}{L^*\brk1{L(L^\dagger y)}}_{\HH}\nonumber\\
&=\scal{Lx}{L(L^\dagger y)}_{\GG}
+\scal{x}{\proj_{\ker L}(L^\dagger y)}_{\HH}\nonumber\\
&=\scal{x}{L^\dagger y}_{\XX}.
\end{align}
In turn, $L_{\XX}^*\colon\GG\to\XX\colon y\mapsto L^\dagger y$.

\ref{ex:1i}: A consequence of Corollary~\ref{c:15}\ref{c:15ii}.

\ref{ex:1ii}--\ref{ex:1iii}: A consequence of
Proposition~\ref{p:2}.

\ref{ex:1iv}: By \cite[Proposition~Corollary~3.32(iv)]{Livre1},
$L^\dagger\circ L=\Id_{\HH}$. Therefore, $L_{\XX}$ is an isometry,
and the assertion follows from Proposition~\ref{p:4}\ref{p:4i}.

\ref{ex:1v}: By \cite[Proposition~3.30(ii)]{Livre1}, 
$L\circ L^\dagger=\Id_{\GG}$. Therefore, $L_{\XX}$ is a coisometry,
and the assertion follows from Proposition~\ref{p:4}\ref{p:4ii}.
\end{proof}

\begin{remark}
When $L\in\BL(\HH,\GG)$ satisfies $L^\dagger\circ L=\Id_{\HH}$,
the operator
$T=L^\dagger\circ J_{\gamma\partial\|\cdot\|_{\HH}}\circ L$
has been used to enhance the performance of wavelet-domain 
denoising \cite{Coif95}. Consequently,
Example~\ref{ex:1}\ref{ex:1ii} shows that this method implicitly
involves resolvent compositions.
\end{remark}

\begin{example}
\label{ex:20}
Let $U\in\BL(\HH)$ be a self-adjoint and strongly monotone
operator. In the context of Example~\ref{ex:1}, assume that
$\GG=\HH$ and that $L=U^{-1/2}$. Then 
\begin{equation}
L_{\XX}\proxc{\gamma}B
=U^{1/2}\circ B\circ U^{-1/2}.
\end{equation}
\end{example}
\begin{proof}
In this case, $L$ is invertible and $L^\dagger=L^{-1}=U^{1/2}$.
Therefore, Example~\ref{ex:1}\ref{ex:1iv}--\ref{ex:1v} implies
that $L_{\XX}\proxc{\gamma}B=L_{\XX}\proxcc{\gamma}B
=U^{1/2}\circ B\circ U^{-1/2}$, as claimed.
\end{proof}

\begin{example}
\label{ex:proxc}
Suppose that $L\in\BL(\HH,\GG)$ satisfies $0<\|L\|\leq 1$, let
$g\colon\GG\to\RX$ be a proper function that admits a continuous
affine minorant, let $\gamma\in\RPP$, let
\begin{equation}
L\proxc{\gamma}g
=\brk2{\moyo{\brk1{g^*}}{\frac{1}{\gamma}}\circ L}^*
-\frac{1}{2\gamma}\|\cdot\|_{\HH}^2
\end{equation}
be the \emph{proximal composition} of $g$ and $L$, and let
$L\proxcc{\gamma}g=(L\proxc{1/\gamma}g^*)^*$ be the \emph{proximal
cocomposition} of $g$ and $L$ (see \cite{Jota24,Svva23,Eect24}).
Then the following hold:
\begin{enumerate}
\item
\label{ex:proxci}
$L\proxc{\gamma}\partial g^{**}=\partial(L\proxc{\gamma}g)$.
\item
\label{ex:proxcii}
$L\proxcc{\gamma}\partial g^{**}=\partial(L\proxcc{\gamma}g)$.
\end{enumerate}
\end{example}
\begin{proof}
Set $\Psi=\Id_{\GG}-L\circ L^*$ and recall from
\cite[Lemma~2.1(v)]{Eect24} that $g^*\in\Gamma_0(\GG)$.

\ref{ex:proxci}: By \cite[Proposition~3.11(i)]{Eect24} and
Proposition~\ref{p:3}\ref{p:3i},
$\partial(L\proxc{\gamma}g)
=L^*\pushfwd(\partial g^{**}+\gamma^{-1}\Psi)
=L\proxc{\gamma}\partial g^{**}$.

\ref{ex:proxcii}: Note that \eqref{e:29} and the identity
$g^{***}=g^*$ yield $(\partial g^{**})^{-1}=\partial g^{***}
=\partial g^*$. Therefore, it follows from
\cite[Proposition~3.11(ii)]{Eect24} and
Proposition~\ref{p:3}\ref{p:3ii} that
$\partial(L\proxcc{\gamma}g)
=L^*\circ(\partial g^*+\gamma\Psi)^{-1}\circ L
=L\proxcc{\gamma}\partial g^{**}$, which provides the desired
identity.
\end{proof}

We conclude this section by examining the resolvent composition of
uniformly monotone operators as well as the Fitzpatrick function of 
resolvent compositions. 

\begin{proposition}
\label{p:41}
Suppose that $L\in\BL(\HH,\GG)$ satisfies $0<\|L\|\leq 1$, let
$B\colon\GG\to2^\GG$ be maximally monotone, and let
$\gamma\in\RPP$. Suppose that $B$ is uniformly monotone with
modulus $\phi\colon\RP\to\RPX$, i.e.,
$\phi$ is increasing, vanishes only at $0$, and
\begin{equation}
\brk1{\forall(y_1,y_1^*)\in\gra B}\brk1{\forall(y_2,y_2^*)\in
\gra B}\quad
\scal{y_1-y_2}{y_1^*-y_2^*}_{\GG}\geq\phi(\|y_1-y_2\|_{\GG}),
\end{equation}
and set $\phi_L=L\proxc{\gamma/2}(\phi\circ\|\cdot\|_{\GG})$. Then
\begin{equation}
\brk1{\forall(x_1,x_1^*)\in\gra(L\proxc{\gamma}B)}
\brk1{\forall(x_2,x_2^*)\in\gra(L\proxc{\gamma}B)}\quad
\scal{x_1-x_2}{x_1^*-x_2^*}_{\GG}\geq\phi_L(x_1-x_2).
\end{equation}
\end{proposition}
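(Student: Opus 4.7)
The plan is to reduce the inequality to the uniform monotonicity of $B$ by extracting companion points in $\gra B$ from the pairs in $\gra(L\proxc{\gamma}B)$, and then transferring the resulting inequality back to $\HH$ via the identity $x_i=L^*y_i$ that drops out of the parallel composition representation.

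First I would unpack the graph membership. By Definition~\ref{d:1}, $(x_i,x_i^*)\in\gra(L\proxc{\gamma}B)$ is equivalent to $(x_i,x_i^*+\gamma^{-1}x_i)\in\gra(L^*\pushfwd(B+\gamma^{-1}\Id_{\GG}))$, and \eqref{e:parallel} then yields $y_i\in\GG$ such that $x_i=L^*y_i$ and $L(x_i^*+\gamma^{-1}x_i)-\gamma^{-1}y_i\in By_i$. Setting $y_i^*=Lx_i^*+\gamma^{-1}(Lx_i-y_i)$ places $(y_i,y_i^*)$ in $\gra B$, so uniform monotonicity gives $\scal{y_1-y_2}{y_1^*-y_2^*}_{\GG}\geq\phi(\|y_1-y_2\|_{\GG})$. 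Expanding $y_1^*-y_2^*=L(x_1^*-x_2^*)+\gamma^{-1}L(x_1-x_2)-\gamma^{-1}(y_1-y_2)$ and using $L^*(y_1-y_2)=x_1-x_2$ simplifies the left-hand side to $\scal{x_1-x_2}{x_1^*-x_2^*}_{\HH}+\gamma^{-1}\|x_1-x_2\|_{\HH}^2-\gamma^{-1}\|y_1-y_2\|_{\GG}^2$, yielding the key lower bound
\[
\scal{x_1-x_2}{x_1^*-x_2^*}_{\HH}\geq\phi(\|y_1-y_2\|_{\GG})+\gamma^{-1}\|y_1-y_2\|_{\GG}^2-\gamma^{-1}\|x_1-x_2\|_{\HH}^2.
\]

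Next I would identify the right-hand side with an upper bound on $\phi_L(x_1-x_2)$. Using the proximal composition formula of Example~\ref{ex:proxc} with $g=\phi\circ\|\cdot\|_{\GG}$, combined with the infimal-convolution identity $(\moyo{g^*}{2/\gamma})^*=g^{**}+\gamma^{-1}\|\cdot\|_{\GG}^2$ and the (qualified) conjugation rule $(h\circ L)^*=L^*\pushfwd h^*$, one obtains the majorization $\phi_L(x)\leq\inf_{L^*y=x}\{\phi(\|y\|_{\GG})+\gamma^{-1}\|y\|_{\GG}^2\}-\gamma^{-1}\|x\|_{\HH}^2$. Since $L^*(y_1-y_2)=x_1-x_2$, substituting $y=y_1-y_2$ into this infimum concludes the proof. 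The main obstacle is this final step: establishing the variational representation of $\phi_L$ rigorously despite the possible non-convexity of $\phi\circ\|\cdot\|_{\GG}$. This should be handled by routing the computation through the biconjugate $(\phi\circ\|\cdot\|_{\GG})^{**}$ (which minorizes $\phi\circ\|\cdot\|_{\GG}$, hence preserves the inequality direction) and by verifying the standard qualification condition for $(h\circ L)^*=L^*\pushfwd h^*$ in the present setting.
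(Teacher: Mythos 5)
Your argument is correct and follows the paper's proof essentially step for step: you extract the same companion points $(y_i,y_i^*)\in\gra B$ with $L^*y_i=x_i$ (the paper gets them via Proposition~\ref{p:2}\ref{p:2i}, you via \eqref{e:parallel} directly, which is the same computation), obtain the same key lower bound $\scal{x_1-x_2}{x_1^*-x_2^*}_{\HH}\geq\phi(\|y_1-y_2\|_{\GG})+\gamma^{-1}\brk1{\|y_1-y_2\|_{\GG}^2-\|x_1-x_2\|_{\HH}^2}$, and close by the same variational representation of $L\proxc{\gamma/2}(\phi\circ\|\cdot\|_{\GG})$, which the paper simply cites from \cite[Proposition~3.2(i)]{Eect24}. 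The only point worth noting is that the ``main obstacle'' you flag is not one: you only need the inequality $\brk1{h\circ L}^*(x)\leq h^*(v)$ for each $v$ with $L^*v=x$ (applied to $h=\moyo{(g^*)}{2/\gamma}$), which follows from the definition of the conjugate with no qualification condition, so routing through the biconjugate of $\phi\circ\|\cdot\|_{\GG}$ suffices exactly as you propose.
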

\begin{proof}
Note that $\phi\circ\|\cdot\|_{\GG}\geq 0$ and that
$\phi(\|0\|_{\GG})=0$. Thus, $\phi\circ\|\cdot\|_{\GG}$ is a proper
function minorized by the affine function $0$. Further, by
\cite[Proposition~13.16]{Livre1}, 
$\phi\circ\|\cdot\|_{\GG}\geq(\phi\circ\|\cdot\|_{\GG})^{**}$.
On the other hand, recall from Corollary~\ref{c:15}\ref{c:15ii}
that $L\proxc{\gamma}B$ is maximally monotone. Let
$(x_1,x_1^*)\in\gra(L\proxc{\gamma}B)$ and 
$(x_2,x_2^*)\in\gra(L\proxc{\gamma}B)$. It follows from
Proposition~\ref{p:2}\ref{p:2i} that
\begin{align}
\label{e:mod}
(\forall k\in\{1,2\})\quad
x_k^*\in\brk1{L\proxc{\gamma}B}x_k&\Leftrightarrow
J_{\gamma\brk1{L\proxc{\gamma}B}}(x_k+\gamma x_k^*)=x_k\nonumber\\
&\Leftrightarrow
\begin{cases}
(\exi p_k\in\GG)\,\,L^*p_k=x_k\\
J_{\gamma B}\brk1{L(x_k+\gamma x_k^*)}=p_k
\end{cases}\nonumber\\
&\Leftrightarrow
\begin{cases}
(\exi p_k\in\GG)\,\,L^*p_k=x_k\\
\brk1{p_k,L(\gamma^{-1}x_k+x_k^*)-\gamma^{-1}p_k}\in\gra B.
\end{cases}
\end{align}
Since $B$ is uniformly monotone with modulus $\phi$, we deduce that
\begin{align}
\label{e:mod2}
\gamma^{-1}\|x_1-x_2\|_{\HH}^2+\scal{x_1-x_2}{x_1^*-x_2^*}_{\HH}
-\gamma^{-1}\|p_1-p_2\|_{\GG}^2={}
&\scal{p_1-p_2}{\gamma^{-1}L(x_1-x_2)+L(x_1^*-x_2^*)}_{\GG}
\nonumber\\
&-\gamma^{-1}\scal{p_1-p_2}{p_1-p_2}_{\GG}
\nonumber\\
\geq{}&\phi(\|p_1-p_2\|_{\GG}).
\end{align}
Therefore, since $L^*(p_1-p_2)=x_1-x_2$, we deduce from
\eqref{e:mod2} and \cite[Proposition~3.2(i)]{Eect24} that
\begin{align}
\scal{x_1-x_2}{x_1^*-x_2^*}_{\HH}&\geq
\phi(\|p_1-p_2\|_{\GG})
+\gamma^{-1}\brk1{\|p_1-p_2\|_{\GG}^2-\|x_1-x_2\|_{\HH}^2}
\nonumber\\
&\geq\inf_{\substack{v\in\GG\\ L^*v=x_1-x_2}}\brk2{
\phi(\|v\|_{\GG})+\gamma^{-1}\brk1{\|v\|_{\GG}-\|L^*v\|^2_{\HH}}}
\nonumber\\
&\geq\inf_{\substack{v\in\GG\\ L^*v=x_1-x_2}}\brk2{
\brk1{\phi\circ\|\cdot\|_{\GG}}^{**}(v)
+\gamma^{-1}\brk1{\|v\|_{\GG}-\|L^*v\|^2_{\HH}}}
\nonumber\\
&=\brk1{L\proxc{\gamma/2}(\phi\circ\|\cdot\|_{\GG})}(x_1-x_2),
\end{align}
which completes the proof.
\end{proof}

\begin{proposition}[Fitzpatrick function]
\label{p:25}
Suppose that $L\in\BL(\HH,\GG)$ satisfies $\|L\|\leq 1$,
let $B\colon\GG\to2^\GG$ be maximally monotone, let
\begin{equation}
\label{e:p25}
F_B\colon\GG\times\GG\to\RXX\colon(x,x^*)\mapsto
\sup_{(v,v^*)\in\gra B}\brk1{\scal{v}{x^*}_{\GG}
+\scal{x}{v^*}_{\GG}-\scal{v}{v^*}_{\GG}}
\end{equation}
be its Fitzpatrick function, and let $\gamma\in\RPP$. Then the
following hold:
\begin{enumerate}
\item
Let $x\in\ker(\Id_{\HH}-L^*\circ L)$ and $x^*\in\HH$. Then
\label{p:25i}
$F_{\gamma\brk1{L\proxc{\gamma}B}}(x,x^*)\leq 
F_{\gamma B}(Lx,Lx^*)$.
\item
\label{p:25ii}
Let $x\in\HH$ and $x^*\in\ker(\Id_{\HH}-L^*\circ L)$. Then
$F_{\gamma\brk1{L\proxcc{\gamma}B}}(x,x^*)\leq 
F_{\gamma B}(Lx,Lx^*)$.
\end{enumerate}
\end{proposition}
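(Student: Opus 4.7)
The plan is to prove both parts by a direct computation on the Fitzpatrick integrand, characterizing the graphs of $\gamma(L\proxc{\gamma}B)$ and $\gamma(L\proxcc{\gamma}B)$ via lifted pairs in $\gra(\gamma B)$, then collapsing the cross-terms using the kernel hypothesis and exploiting $\|L\|\leq 1$ to bound a residual. Note that Corollary~\ref{c:15}\ref{c:15ii} already guarantees that $L\proxc{\gamma}B$ and $L\proxcc{\gamma}B$ are maximally monotone, so their Fitzpatrick functions are well-defined.

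For~\ref{p:25i}, I would first derive, from Definition~\ref{d:1} and the identity $(L^*\pushfwd A)^{-1}=L^*\circ A^{-1}\circ L$ implicit in \eqref{e:parallel} (applied to $A=B+\gamma^{-1}\Id_{\GG}$), the graph characterization
\begin{equation*}
v^*\in\gamma\brk1{L\proxc{\gamma}B}v\,\Longleftrightarrow\,
(\exi p\in\GG)\,\,v=L^*p\text{ and }(p,L(v+v^*)-p)\in\gra(\gamma B).
\end{equation*}
Fixing such a $(v,v^*)$ and setting $p^*=L(v+v^*)-p$, so that $(p,p^*)\in\gra(\gamma B)$, I would then substitute $p^*=Lv+Lv^*-p$ and use $v=L^*p$ together with the hypothesis $L^*Lx=x$ to expand the $F_{\gamma B}$-integrand. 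A direct computation shows that the cross-terms telescope and yield
\begin{equation*}
\scal{p}{Lx^*}_\GG+\scal{Lx}{p^*}_\GG-\scal{p}{p^*}_\GG
=\scal{v}{x^*}_\HH+\scal{x}{v^*}_\HH-\scal{v}{v^*}_\HH
+\brk1{\|p\|_\GG^2-\|L^*p\|_\HH^2},
\end{equation*}
and the residual is non-negative because $\|L\|\leq 1$. Passing to the supremum over $(v,v^*)\in\gra(\gamma(L\proxc{\gamma}B))$, and noting that the associated pair $(p,p^*)$ traces out a subset of $\gra(\gamma B)$, delivers the required bound $F_{\gamma(L\proxc{\gamma}B)}(x,x^*)\leq F_{\gamma B}(Lx,Lx^*)$.

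For~\ref{p:25ii}, I would repeat the same strategy, this time using Proposition~\ref{p:3}\ref{p:3ii} with $\Psi=\Id_\GG-L\circ L^*$ to obtain the analogous characterization
\begin{equation*}
v^*\in\gamma\brk1{L\proxcc{\gamma}B}v\,\Longleftrightarrow\,
(\exi q^*\in\GG)\,\,v^*=L^*q^*\text{ and }(Lv-\Psi q^*,q^*)\in\gra(\gamma B).
\end{equation*}
Setting $q=Lv-\Psi q^*$, one checks that $q+q^*=L(v+v^*)$, so that the same template applies with the roles of the two coordinates swapped: now the hypothesis $L^*Lx^*=x^*$ is what is needed to kill the cross-terms. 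The identical algebraic expansion produces a residual of the form $\|q^*\|_\GG^2-\|L^*q^*\|_\HH^2\geq 0$, and the supremum argument concludes. The only real obstacle is executing the two graph characterizations cleanly — once these are in place, the subsequent algebra is routine but must be organized so that every stray cross-term cancels, leaving only the norm residual.
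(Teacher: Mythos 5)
Your argument is correct, and it takes a genuinely different route from the paper. The paper proves \ref{p:25i} by combining Minty's parametrization of the Fitzpatrick function, $F_{A}(x,x^*)=\sup_{y}\brk1{\scal{J_{A}y}{x^*}_{\HH}+\scal{x}{y-J_{A}y}_{\HH}-\scal{J_{A}y}{y-J_{A}y}_{\HH}}$, with the resolvent identity $J_{\gamma(L\proxc{\gamma}B)}=L^*\circ J_{\gamma B}\circ L$ of Proposition~\ref{p:2}\ref{p:2i}, estimating the resulting expression term by term and then enlarging the supremum from $\menge{Ly}{y\in\HH}$ to all of $\GG$; it then deduces \ref{p:25ii} from \ref{p:25i} via the symmetry $F_{A^{-1}}(x,x^*)=F_A(x^*,x)$ and the inversion relation of Definition~\ref{d:1}. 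You instead work directly on the graph: your characterization of $\gra\brk1{\gamma(L\proxc{\gamma}B)}$ is exactly the one the paper derives in \eqref{e:mod} (in the proof of Proposition~\ref{p:41}), and I have checked that your telescoping identity is exact --- with $v=L^*p$, $p^*=L(v+v^*)-p$, and $L^*Lx=x$, one indeed gets $\scal{p}{Lx^*}_{\GG}+\scal{Lx}{p^*}_{\GG}-\scal{p}{p^*}_{\GG}=\scal{v}{x^*}_{\HH}+\scal{x}{v^*}_{\HH}-\scal{v}{v^*}_{\HH}+\|p\|_{\GG}^2-\|L^*p\|_{\HH}^2$, and likewise for \ref{p:25ii} via Proposition~\ref{p:3}\ref{p:3ii}, where $q+q^*=L(v+v^*)$ and the kernel hypothesis on $x^*$ is the one that cancels the cross-terms. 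Your approach buys an explicit nonnegative slack term quantifying the gap in the inequality and avoids Minty's parametrization altogether, at the cost of redoing the computation for \ref{p:25ii} rather than invoking the inverse symmetry; the paper's route is shorter because it reuses Proposition~\ref{p:2} and the $F_{A^{-1}}$ identity. Either is publishable as written, though you should spell out the two graph characterizations (they follow from \eqref{e:parallel} and Proposition~\ref{p:3}\ref{p:3ii} exactly as you indicate) rather than leaving them as claims.
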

\begin{proof}
We recall from Corollary~\ref{c:15}\ref{c:15ii} that
$L\proxc{\gamma}B$ and $L\proxcc{\gamma}B$ are maximally monotone.

\ref{p:25i}: By Minty's parametrization
\cite[Remark~23.23(ii)]{Livre1},
\begin{equation}
\label{e:p25a}
F_{\gamma\brk1{L\proxc{\gamma}B}}(x,x^*)
=\sup_{y\in\HH}\brk3{
\scal2{J_{\gamma\brk1{L\proxc{\gamma}B}}y}{x^*}_{\HH}
+\scal2{x}{y-J_{\gamma\brk1{L\proxc{\gamma}B}}y}_{\HH}
-\scal2{J_{\gamma\brk1{L\proxc{\gamma}B}}y}{y
-J_{\gamma\brk1{L\proxc{\gamma}B}}y}_{\HH}}.
\end{equation}
Thus, by virtue of Proposition~\ref{p:2}\ref{p:2i} and 
$\|L\|\leq 1$, we deduce that, for every $y\in\HH$,
\begin{align}
\label{e:p25b}
&\scal2{x}{y-J_{\gamma\brk1{L\proxc{\gamma}B}}y}_{\HH}
+\scal2{J_{\gamma\brk1{L\proxc{\gamma}B}}y}{x^*}_{\HH}
-\scal2{J_{\gamma\brk1{L\proxc{\gamma}B}}y}{y
-J_{\gamma\brk1{L\proxc{\gamma}B}}y}_{\HH}
\nonumber\\
&=\scal2{x}{y-L^*\brk1{J_{\gamma B}(Ly)}}_{\HH}
+\scal2{L^*\brk1{J_{\gamma B}(Ly)}}{x^*}_{\HH}
-\scal2{L^*\brk1{J_{\gamma B}(Ly)}}{y
-L^*\brk1{J_{\gamma B}(Ly)}}_{\HH}
\nonumber\\
&\leq\scal{x-L^*(Lx)}{y}_{\HH}
+\brk2{\scal{Lx}{Ly-J_{\gamma B}(Ly)}_{\GG}
+\scal{J_{\gamma B}(Ly)}{Lx^*}_{\GG}
-\scal{J_{\gamma B}(Ly)}{Ly-J_{\gamma B}(Ly)}_{\GG}}
\nonumber\\
&\leq\sup_{v\in\GG}\brk2{\scal{Lx}{v-J_{\gamma B}v}_{\GG}
+\scal{J_{\gamma B}v}{Lx^*}_{\GG}
-\scal{J_{\gamma B}v}{v-J_{\gamma B}v}_{\GG}}
\nonumber\\
&=F_{\gamma B}(Lx,Lx^*).
\end{align}
Therefore, taking the supremum over $y\in\HH$ in \eqref{e:p25b},
the conclusion follows from \eqref{e:p25a}.

\ref{p:25ii}: By Proposition~\ref{p:1}\ref{p:1vi}, 
Definition~\ref{d:1}, \eqref{e:p25}, and \ref{p:25i},
\begin{align}
F_{\gamma\brk1{L\proxcc{\gamma}B}}(x,x^*)
=F_{\brk1{L\proxc{}(\gamma B)^{-1}}^{-1}}(x,x^*)
=F_{L\proxc{}(\gamma B)^{-1}}(x^*,x)
\leq F_{(\gamma B)^{-1}}(Lx^*,Lx)
=F_{\gamma B}(Lx,Lx^*),
\end{align}
which completes the proof.
\end{proof}

\begin{corollary}
\label{c:fitz}
Suppose that $L\in\BL(\HH,\GG)$ is an isometry, let
$B\colon\GG\to2^\GG$ be maximally monotone, and let
$\gamma\in\RPP$. Then
\begin{equation}
(\forall x\in\HH)(\forall x^*\in\HH)\quad
F_{\gamma\brk1{L\proxc{\gamma}B}}(x,x^*)\leq
F_{\gamma B}(Lx,Lx^*)
\end{equation}
\end{corollary}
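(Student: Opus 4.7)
The plan is to recognize Corollary~\ref{c:fitz} as an immediate specialization of Proposition~\ref{p:25}\ref{p:25i}. The key observation is that the isometry hypothesis $L^*\circ L=\Id_\HH$ collapses the restriction $x\in\ker(\Id_\HH-L^*\circ L)$ to the trivial condition $x\in\HH$, so the pointwise bound from part~\ref{p:25i} of that proposition extends to the full product space $\HH\times\HH$.

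Concretely, I would first verify the two hypotheses of Proposition~\ref{p:25}. Maximal monotonicity of $B$ is assumed directly. For the norm bound, any isometry satisfies
\begin{equation}
(\forall x\in\HH)\quad \|Lx\|_{\GG}^2=\scal{Lx}{Lx}_{\GG}
=\scal{x}{L^*Lx}_{\HH}=\|x\|_{\HH}^2,
\end{equation}
so $\|L\|\leq 1$ (with equality when $\HH\neq\{0\}$). Next, the identity $L^*\circ L=\Id_\HH$ gives $\Id_\HH-L^*\circ L=0$, so $\ker(\Id_\HH-L^*\circ L)=\HH$. Consequently, every $x\in\HH$ satisfies the membership condition of Proposition~\ref{p:25}\ref{p:25i}, and one may apply that proposition to an arbitrary pair $(x,x^*)\in\HH\times\HH$ to obtain
\begin{equation}
F_{\gamma(L\proxc{\gamma}B)}(x,x^*)\leq F_{\gamma B}(Lx,Lx^*),
\end{equation}
which is the claimed inequality.

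I do not foresee any genuine obstacle, since all the analytical work, in particular the Minty parametrization of the Fitzpatrick function and the manipulations using $J_{\gamma(L\proxc{\gamma}B)}=L^*\circ J_{\gamma B}\circ L$, was carried out in the proof of Proposition~\ref{p:25}. The only content specific to this corollary is the elementary remark that an isometry trivializes the subspace constraint appearing there.
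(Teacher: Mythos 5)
Your argument is exactly the paper's: the isometry identity $L^*\circp L$... rather, $L^*\circ L=\Id_{\HH}$ makes $\ker(\Id_{\HH}-L^*\circ L)=\HH$, so Proposition~\ref{p:25}\ref{p:25i} applies to every pair $(x,x^*)$; your additional check that an isometry satisfies $\|L\|\leq 1$ is a correct and harmless elaboration. No differences of substance.
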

\begin{proof}
Since $L$ is an isometry, $\ker(\Id_{\GG}-L^*\circ L)=\HH$.
Therefore, the conclusion is a consequence of
Proposition~\ref{p:25}\ref{p:25i}.
\end{proof}

\begin{corollary}[\protect{\cite[Theorem~2.13]{Baus16}}]
Let $0\neq p\in\NN$ and let $\gamma\in\RPP$. For every
$k\in\{1,\ldots,p\}$, let $B_k\colon\HH\to2^\HH$ be maximally
monotone and let $\alpha_k\in\RPP$. Suppose that
$\sum_{k=1}^p\alpha_k=1$. Then
\begin{equation}
F_{\gamma\rav_{\gamma}(B_k,\alpha_k)_{1\leq k\leq p}}\leq
\sum_{k=1}^p\alpha_kF_{\gamma B_k}.
\end{equation}
\end{corollary}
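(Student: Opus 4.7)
The plan is to realize the resolvent average as a resolvent composition through an isometry, as done in Example~\ref{ex:rave}, and then combine Corollary~\ref{c:fitz} with the behavior of the Fitzpatrick function under Cartesian products and scalar reparametrizations. Specifically, let $\GG=\bigoplus_{k=1}^p\HH$, let $L\colon\HH\to\GG\colon x\mapsto(\sqrt{\alpha_k}x)_{1\leq k\leq p}$, and let $B\colon\GG\to2^\GG\colon(y_k)_k\mapsto\big(\sqrt{\alpha_k}B_k(y_k/\sqrt{\alpha_k})\big)_{1\leq k\leq p}$, as in Examples~\ref{ex:mix} and \ref{ex:rave}. Then $\sum_k\alpha_k=1$ makes $L$ an isometry, each $B_k$ is maximally monotone, so $B$ is maximally monotone as a Cartesian product, and $L\proxc{\gamma}B=\rav_\gamma(B_k,\alpha_k)_{1\leq k\leq p}$.

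Next, I apply Corollary~\ref{c:fitz} to obtain
\begin{equation*}
(\forall x\in\HH)(\forall x^*\in\HH)\quad
F_{\gamma\rav_\gamma(B_k,\alpha_k)_{1\leq k\leq p}}(x,x^*)
=F_{\gamma(L\proxc{\gamma}B)}(x,x^*)\leq F_{\gamma B}(Lx,Lx^*).
\end{equation*}
It therefore remains to show that $F_{\gamma B}(Lx,Lx^*)=\sum_{k=1}^p\alpha_kF_{\gamma B_k}(x,x^*)$.

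For this, I first use the fact that if an operator decomposes as a direct product $B'=B_1'\times\cdots\times B_p'$, then its Fitzpatrick function \eqref{e:p25} decouples as $F_{B'}((y_k)_k,(y_k^*)_k)=\sum_kF_{B_k'}(y_k,y_k^*)$, an immediate consequence of the product structure of $\gra B'$ and the additivity of the scalar product of $\GG$. Applying this to $\gamma B$, whose $k$th factor is $y\mapsto\gamma\sqrt{\alpha_k}B_k(y/\sqrt{\alpha_k})$, and plugging in the coordinates $y_k=\sqrt{\alpha_k}x$, $y_k^*=\sqrt{\alpha_k}x^*$, the substitution $(v,v^*)=(\sqrt{\alpha_k}w,\gamma\sqrt{\alpha_k}w^*)$ over $\gra B_k$ turns each Fitzpatrick supremum into
\begin{equation*}
\sup_{(w,w^*)\in\gra B_k}
\brk1{\alpha_k\scal{w}{x^*}_\HH+\gamma\alpha_k\scal{x}{w^*}_\HH
-\gamma\alpha_k\scal{w}{w^*}_\HH}=\alpha_kF_{\gamma B_k}(x,x^*),
\end{equation*}
where in the last equality I recognized the Fitzpatrick function of $\gamma B_k$ via the analogous reparametrization $v^*=\gamma w^*$ on $\gra B_k$. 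Summing over $k$ yields the desired identity $F_{\gamma B}(Lx,Lx^*)=\sum_k\alpha_kF_{\gamma B_k}(x,x^*)$, which combined with the previous inequality completes the proof.

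The only real obstacle is bookkeeping the two scalings (the internal $\sqrt{\alpha_k}$ rescaling baked into $B$ and the external factor $\gamma$) while performing the change of variables in the Fitzpatrick supremum; the rest is a direct appeal to already established results (Corollary~\ref{c:fitz} and Example~\ref{ex:rave}).
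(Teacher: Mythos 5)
Your proposal is correct and follows exactly the paper's route: realize $\rav_\gamma(B_k,\alpha_k)_{1\le k\le p}$ as $L\proxc{\gamma}B$ via Example~\ref{ex:rave}, apply Corollary~\ref{c:fitz}, and identify $F_{\gamma B}(Lx,Lx^*)$ with $\sum_{k}\alpha_kF_{\gamma B_k}(x,x^*)$. The only difference is that you spell out the product/rescaling computation for the Fitzpatrick function, which the paper asserts without detail; your bookkeeping of the $\sqrt{\alpha_k}$ and $\gamma$ scalings is accurate.
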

\begin{proof}
Define $L$ and $B$ as in Example~\ref{ex:rave} and recall that
$L\proxc{\gamma}B=\rav_{\gamma}(B_k,\alpha_k)_{1\leq k\leq p}$. In
this case, $L$ is an isometry, and it follows from
Corollary~\ref{c:fitz} that, for every $x\in\HH$ and $x^*\in\HH$,
\begin{equation}
F_{\gamma\rav_{\gamma}(B_k,\alpha_k)_{1\leq k\leq p}}(x,x^*)
\leq F_{\gamma B}(Lx,Lx^*)
=\sum_{k=1}^p\alpha_kF_{\gamma B_k}(x,x^*),
\end{equation}
as announced.
\end{proof}

\section{Asymptotic behavior of resolvent compositions}
\label{sec:5}

We examine the convergence of the operators $L\proxc{\gamma}B$ and
$L\proxcc{\gamma}B$ when $\gamma$ varies, studying their
corresponding graph. We begin by recalling some definitions related
to set-convergence, which enable us to characterize the convergence
of operators through their graphs.

\subsection{Set-convergence}

\begin{definition}[Painlev\'{e}--Kuratowski]
Let $(C_n)_{n\in\NN}$ be a sequence of subsets of $\HH$. The
\emph{lower limit} of the sequence $(C_n)_{n\in\NN}$ is the closed
subset of $\HH$ defined by 
\begin{equation}
\varliminf C_n=\menge{x\in\HH}{(\exi
(x_n)_{n\in\NN}\;\;\text{in}\;\;\HH)
(\forall n\in\NN)\,\,x_n\in C_n\,\,\text{and}\,\,x_n\to x}.
\end{equation}
The \emph{upper limit} of the sequence $(C_n)_{n\in\NN}$ is the
closed subset of $\HH$ defined by 
\begin{equation}
\varlimsup C_n=\menge{x\in\HH}{
(\exi (x_n)_{n\in\NN}\;\text{in}\;\HH)
(\exi(k_n)_{n\in\NN}\;\text{in}\;\NN)
(\forall n\in\NN)\,\,x_n\in C_{k_n}\,\,
\text{and}\,\,x_n\to x}.
\end{equation}
The sequence $(C_n)_{n\in\NN}$ is \emph{Painlev\'{e}--Kuratowski}
convergent if its upper limit coincides with its lower limit. The
limit set in this case is given by
\begin{equation}
\lim_{n\to\pinf}C_n=\varliminf C_n=\varlimsup C_n.
\end{equation}
\end{definition}

Let $C$ and $D$ be subsets of $\HH$. The \emph{excess function} of
$C$ on $D$ is defined by
\begin{equation}
e(C,D)=\sup_{x\in C}d_D(x),
\end{equation}
with the convention that $e(\emp,D)=0$.

\begin{definition}[$\rho$-Hausdorff distance \cite{Atto91,Peno90}]
Let $C$ and $D$ be subsets of $\HH$, let $\rho\in\RP$, and set
$C_\rho=C\cap B(0;\rho)$ and $D_\rho=D\cap B(0;\rho)$ The
\emph{$\rho$-Hausdorff distance} between $C$ and $D$ is 
\begin{equation}
\haus_{\rho}(C,D)=\max\{e(C_\rho,D),e(D_\rho,C)\}.
\end{equation}
A sequence $(C_n)_{n\in\NN}$ of subsets of $\HH$ converges with
respect to the $\rho$-Hausdorff distance to the subset $C$ of
$\HH$ if
\begin{equation}
(\forall\rho\in\RPP)\quad\lim_{n\to\pinf}\haus_{\rho}(C_n,C)=0.
\end{equation}
\end{definition}

\subsection{Graph-convergence of operators}

\begin{definition}
Let $(A_n)_{n\in\NN}$ and $A$ be set-valued operators from $\HH$ to
$2^\HH$. Then $(A_n)_{n\in\NN}$ \emph{graph-converges} to
$A$, denoted by $A_n\xrightarrow{g}A$, if $(\gra A_n)_{n\in\NN}$
converges to $\gra A$ in the Painlev\'{e}--Kuratowski sense.
\end{definition}

\begin{definition}
Let $A\colon\HH\to2^\HH$, $B\colon\HH\to2^\HH$, and $\rho\in\RP$.
The \emph{$\rho$-Hausdorff distance} between $A$ and $B$ is
\begin{equation}
\haus_{\rho}(A,B)=\haus_{\rho}(\gra A,\gra B).
\end{equation}
A sequence $(A_n)_{n\in\NN}$ of operators from $\HH$ to $2^\HH$
converges with respect to the $\rho$-Hausdorff distance to the
operator $A\colon\HH\to2^\HH$ if
\begin{equation}
(\forall \rho\in\RPP)\quad
\lim_{n\to\pinf}\haus_{\rho}(A_n,A)=0.
\end{equation}
\end{definition}

Some equivalences of graph-convergence for maximally monotone
operators are summarized in the following result (see e.g.
\cite{Atto84}).

\begin{lemma}
\label{l:57}
Let $(A_n)_{n\in\NN}$ and $A$ be maximally monotone operators from
$\HH$ to $2^\HH$. Then the following are equivalent:
\begin{enumerate}
\item
$A_n\xrightarrow{g}A$.
\item
$(\forall \gamma\in\RPP)(\forall x\in\HH)\,\,
J_{\gamma A_n}x\to J_{\gamma A}x$.
\item
$(\exi \gamma\in\RPP)(\forall x\in\HH)\,\,
J_{\gamma A_n}x\to J_{\gamma A}x$.
\end{enumerate}
\end{lemma}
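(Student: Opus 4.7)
The plan is to prove the chain of implications \ref{l:57}(i)$\Rightarrow$\ref{l:57}(ii)$\Rightarrow$\ref{l:57}(iii)$\Rightarrow$\ref{l:57}(i). The implication (ii)$\Rightarrow$(iii) is immediate, since (ii) asserts pointwise convergence of $J_{\gamma A_n}$ for every $\gamma\in\RPP$, while (iii) requires only a single such $\gamma$. The two substantive implications both exploit Minty's parametrization: for any maximally monotone operator $M$ and any $\gamma\in\RPP$, the map $y\mapsto(J_{\gamma M}y,\gamma^{-1}(y-J_{\gamma M}y))$ is a bijection between $\HH$ and $\gra M$, and the resolvent $J_{\gamma M}$ is nonexpansive.

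For (i)$\Rightarrow$(ii), I would fix $\gamma\in\RPP$ and $y\in\HH$, set $x=J_{\gamma A}y$, so that $(x,\gamma^{-1}(y-x))\in\gra A$. By graph-convergence, there exists a sequence $(x_n,u_n^*)\in\gra A_n$ with $x_n\to x$ and $u_n^*\to\gamma^{-1}(y-x)$. Setting $y_n=x_n+\gamma u_n^*$, Minty's parametrization gives $x_n=J_{\gamma A_n}y_n$ and $y_n\to y$. Nonexpansivity then yields
\begin{equation}
\|J_{\gamma A_n}y-J_{\gamma A}y\|_{\HH}
\leq\|J_{\gamma A_n}y-J_{\gamma A_n}y_n\|_{\HH}+\|x_n-x\|_{\HH}
\leq\|y-y_n\|_{\HH}+\|x_n-x\|_{\HH}\to 0.
\end{equation}

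For (iii)$\Rightarrow$(i), let $\gamma\in\RPP$ be as in the hypothesis. I must establish $\gra A=\varliminf\gra A_n=\varlimsup\gra A_n$. For the inclusion $\gra A\subset\varliminf\gra A_n$, take $(x,x^*)\in\gra A$, let $y=x+\gamma x^*$, and set $x_n=J_{\gamma A_n}y$ and $x_n^*=\gamma^{-1}(y-x_n)$; Minty gives $(x_n,x_n^*)\in\gra A_n$, and the hypothesis forces $x_n\to J_{\gamma A}y=x$ and hence $x_n^*\to x^*$. For the inclusion $\varlimsup\gra A_n\subset\gra A$, take $(x,x^*)\in\varlimsup\gra A_n$ with a sequence $(x_{k_n},x_{k_n}^*)\in\gra A_{k_n}$ converging to $(x,x^*)$. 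Setting $y_{k_n}=x_{k_n}+\gamma x_{k_n}^*\to x+\gamma x^*=:y$, the identity $x_{k_n}=J_{\gamma A_{k_n}}y_{k_n}$ combined with nonexpansivity gives
\begin{equation}
\|x_{k_n}-J_{\gamma A}y\|_{\HH}\leq\|J_{\gamma A_{k_n}}y_{k_n}-J_{\gamma A_{k_n}}y\|_{\HH}
+\|J_{\gamma A_{k_n}}y-J_{\gamma A}y\|_{\HH}\to 0,
\end{equation}
whence $x=J_{\gamma A}y$, i.e., $(x,x^*)\in\gra A$ by Minty.

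The main obstacle will be the upper-limit inclusion: without a uniform control on the operators $A_n$ and relying only on pointwise convergence of the resolvents, the inequality above must do double duty, combining nonexpansivity (to absorb the drift $y_{k_n}\to y$) and the hypothesis (to handle the variation in $n$). Maximal monotonicity of $A$ is essential here, as it is what allows the conclusion $(x,x^*)\in\gra A$ from the single Minty-type relation $x=J_{\gamma A}(x+\gamma x^*)$; without maximality, such an identity would not recover membership in the graph.
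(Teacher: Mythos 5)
Your proof is correct. The paper itself does not prove Lemma~\ref{l:57}: it is stated as a known fact with a pointer to Attouch's monograph \cite{Atto84}, so there is no in-paper argument to compare against. Your chain (i)$\Rightarrow$(ii)$\Rightarrow$(iii)$\Rightarrow$(i) via Minty's parametrization and nonexpansiveness of the resolvents is exactly the standard argument from that reference, and each step checks out: the liminf inclusion gives (i)$\Rightarrow$(ii), and in (iii)$\Rightarrow$(i) the two inclusions $\gra A\subset\varliminf\gra A_n$ and $\varlimsup\gra A_n\subset\gra A$, together with $\varliminf\gra A_n\subset\varlimsup\gra A_n$, force all three sets to coincide. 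The only point worth flagging is in the upper-limit inclusion: you need the index sequence $(k_n)_{n\in\NN}$ to tend to $\pinf$ in order to conclude $J_{\gamma A_{k_n}}y\to J_{\gamma A}y$ from the pointwise hypothesis (iii); the paper's displayed definition of $\varlimsup C_n$ does not state this explicitly, but it is the intended (standard) reading, and with it your argument is complete.
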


\begin{lemma}
\label{l:69}
Let $(A_n)_{n\in\NN}$ and $A$ be maximally monotone operators from
$\HH$ to $2^\HH$, and let $(\gamma_n)_{n\in\NN}$ and $\gamma$ be in
$\RPP$. Suppose that $A_n\xrightarrow{g}A$ and
$\gamma_n\to\gamma$. Then the following hold:
\begin{enumerate}
\item
\label{l:69i}
$A_n^{-1}\xrightarrow{g}A^{-1}$.
\item
\label{l:69ii}
$\gamma_n A_n\xrightarrow{g}\gamma A$
\end{enumerate}
\end{lemma}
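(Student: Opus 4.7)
The plan is to invoke Lemma~\ref{l:57}, which for maximally monotone operators equates graph-convergence with pointwise convergence of resolvents at some (equivalently, any) positive parameter; both claims reduce to verifying such pointwise convergence.

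For part \ref{l:69i}, maximal monotonicity is preserved under inversion, so $A^{-1}$ and each $A_n^{-1}$ are maximally monotone. Applying Lemma~\ref{l:57} in the forward direction to $(A_n)$ yields $J_{A_n}x\to J_Ax$ for every $x\in\HH$. Combining this with the identity \eqref{e:23}, I get
\begin{equation*}
J_{A_n^{-1}}x=x-J_{A_n}x\longrightarrow x-J_Ax=J_{A^{-1}}x,
\end{equation*}
which, via Lemma~\ref{l:57} applied at $\gamma=1$ to $(A_n^{-1})$, gives $A_n^{-1}\xrightarrow{g}A^{-1}$.

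Part \ref{l:69ii} is more delicate, since both the parameter and the operator vary with $n$. The operators $\gamma_n A_n$ and $\gamma A$ remain maximally monotone, so by Lemma~\ref{l:57} (applied at $\mu=1$) it suffices to show that $y_n:=J_{\gamma_n A_n}x$ converges to $J_{\gamma A}x$ for every $x\in\HH$. The plan has two ingredients. First, I would prove that $(y_n)_{n\in\NN}$ is bounded: fix any $(z_0,z_0^*)\in\gra A$, use $\gra A\subset\varliminf\gra A_n$ to extract $(z_n,z_n^*)\in\gra A_n$ with $(z_n,z_n^*)\to(z_0,z_0^*)$, and combine this with $(y_n,(x-y_n)/\gamma_n)\in\gra A_n$ and the monotonicity of $A_n$ to obtain
\begin{equation*}
\|y_n-z_n\|_{\HH}^2\leq\scal{y_n-z_n}{x-z_n-\gamma_nz_n^*}_{\HH}\leq\|y_n-z_n\|_{\HH}\,\|x-z_n-\gamma_nz_n^*\|_{\HH},
\end{equation*}
so that $\|y_n-z_n\|_{\HH}$ is bounded, since $z_n\to z_0$ and $\gamma_nz_n^*\to\gamma z_0^*$. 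Second, I would apply the classical resolvent identity
\begin{equation*}
J_{\gamma_n A_n}x=J_{\gamma A_n}\!\left(\tfrac{\gamma}{\gamma_n}x+\Big(1-\tfrac{\gamma}{\gamma_n}\Big)y_n\right),
\end{equation*}
denote by $w_n$ the argument on the right-hand side, and observe that $w_n\to x$ because $\gamma_n\to\gamma$ and $(y_n)$ is bounded. Using the nonexpansiveness of $J_{\gamma A_n}$ together with Lemma~\ref{l:57} applied to $(A_n)$ at the fixed parameter $\gamma$,
\begin{equation*}
\|y_n-J_{\gamma A}x\|_{\HH}\leq\|w_n-x\|_{\HH}+\|J_{\gamma A_n}x-J_{\gamma A}x\|_{\HH}\longrightarrow 0,
\end{equation*}
which delivers the required pointwise convergence.

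The main obstacle is precisely the simultaneous variation of the parameter and the operator in \ref{l:69ii}: the monotonicity-based boundedness of $(y_n)$ is the nontrivial technical input, without which one cannot pass to a fixed parameter. Once this bound is secured, the resolvent identity cleanly decouples the two perturbations and reduces the problem to the single-parameter statement already covered by Lemma~\ref{l:57}.
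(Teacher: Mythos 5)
Your proof is correct. Part \ref{l:69i} is exactly the paper's argument: combine \eqref{e:23} with Lemma~\ref{l:57}. For part \ref{l:69ii} you follow the same overall strategy as the paper --- reduce to pointwise resolvent convergence via Lemma~\ref{l:57} and split $\|J_{\gamma_nA_n}x-J_{\gamma A}x\|_{\HH}$ by the triangle inequality into a parameter-perturbation term and an operator-perturbation term at the fixed index $\gamma$ --- but the treatment of the first term differs. The paper invokes \cite[Proposition~23.31(iii)]{Livre1}, which bounds $\|J_{\gamma_nA_n}x-J_{\gamma A_n}x\|_{\HH}$ by $\lvert 1-\gamma_n/\gamma\rvert\,\|x-J_{\gamma A_n}x\|_{\HH}$; since $J_{\gamma A_n}x\to J_{\gamma A}x$ is already known from Lemma~\ref{l:57}, the factor $\|x-J_{\gamma A_n}x\|_{\HH}$ is automatically bounded and no further estimate is required. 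You instead apply the resolvent identity in the orientation $J_{\gamma_nA_n}x=J_{\gamma A_n}w_n$, which produces a bound in terms of $\|x-J_{\gamma_nA_n}x\|_{\HH}$, and this is why you must separately establish the boundedness of $(J_{\gamma_nA_n}x)_{n\in\NN}$. That extra step is carried out correctly (the inclusion $\gra A\subset\varliminf\gra A_n$ does furnish the approximating pairs $(z_n,z_n^*)$, and the monotonicity estimate is sound), but it can be bypassed entirely: writing the same identity with the roles of $\gamma$ and $\gamma_n$ exchanged, namely $J_{\gamma A_n}x=J_{\gamma_nA_n}\bigl(\tfrac{\gamma_n}{\gamma}x+(1-\tfrac{\gamma_n}{\gamma})J_{\gamma A_n}x\bigr)$, and using nonexpansiveness of $J_{\gamma_nA_n}$ yields precisely the inequality the paper cites, so the ``main obstacle'' you single out disappears.
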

\begin{proof}
\ref{l:69i}: This follows from \eqref{e:23} and Lemma~\ref{l:57}.

\ref{l:69ii}: Let $x\in\HH$ and set $(\forall n\in\NN)$
$\theta_n=1-\gamma_n/\gamma$. By
\cite[Proposition~23.31(iii)]{Livre1},
\begin{align}
\label{e:l69}
\|J_{\gamma_n A_n}x-J_{\gamma A}x\|_{\HH}
&\leq\|J_{\gamma_n A_n}x-J_{\gamma A_n}x\|_{\HH}
+\|J_{\gamma A_n}x-J_{\gamma A}x\|_{\HH}
\nonumber\\
&\leq\lvert\theta_n\rvert\|x-J_{\gamma A_n}x\|_{\HH}
+\|J_{\gamma A_n}x-J_{\gamma A}x\|_{\HH}.
\end{align}
Further, $A_n\xrightarrow{g}A$ and Lemma~\ref{l:57} yield
$J_{\gamma A_n}x\to J_{\gamma A}x$. Altogether, since 
$\theta_n\to 0$, we deduce from \eqref{e:l69} that
$J_{\gamma_nA_n}x\to J_{\gamma A}x$. Finally, invoking
Lemma~\ref{l:57} one more, we obtain the assertion.
\end{proof}

\begin{lemma}[\protect{\cite[Propositions~1.1 and 1.2]{Atto93}}]
\label{l:58}
Let $A_1\colon\HH\to2^\HH$ and $A_2\colon\HH\to2^\HH$ be maximally
monotone, and let $\gamma\in\RPP$. Consider 
\begin{equation}
\label{e:hausd}
(\forall\delta\in\RP)\quad
d_{\gamma,\delta}(A_1,A_2)
=\sup_{x\in B(0;\delta)}\|J_{\gamma A_1}x-J_{\gamma A_2}x\|_{\HH}.
\end{equation}
Then the following hold:
\begin{enumerate}
\item
\label{l:58i}
$(\forall\rho\in\RP)$
$\haus_{\rho}(A_1,A_2)\leq
\max\{1,\gamma^{-1}\}d_{\gamma,(1+\gamma)\rho}(A_1,A_2)$.
\item
\label{l:58ii}
Set $\rho=\max\{\delta+\|J_{\gamma A_1}0\|_{\HH},
\gamma^{-1}(\delta+\|J_{\gamma A_1}0\|_{\HH}\}$. Then 
$d_{\gamma,\delta}(A_1,A_2)\leq(2+\gamma)\haus_{\rho}(A_1,A_2)$.
\end{enumerate}
\end{lemma}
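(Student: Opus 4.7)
The whole argument will rest on \emph{Minty's parametrization}: for a maximally monotone $A\colon\HH\to2^\HH$ and $\gamma\in\RPP$, the map $\gra A\to\HH\colon(x,x^*)\mapsto x+\gamma x^*$ is a bijection with inverse $y\mapsto(J_{\gamma A}y,\gamma^{-1}(y-J_{\gamma A}y))=(J_{\gamma A}y,\moyo{A}{\gamma}y)$. Thus comparing two graphs reduces to comparing two resolvents and two Yosida approximations along a common parameter $y$.

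For item~\ref{l:58i}, I would estimate the two excesses that appear in $\haus_{\rho}(A_1,A_2)$ symmetrically. Pick $(x_1,x_1^*)\in\gra A_1$ lying in the graph ball of radius $\rho$, set $y=x_1+\gamma x_1^*$ and $(x_2,x_2^*)=(J_{\gamma A_2}y,\gamma^{-1}(y-J_{\gamma A_2}y))\in\gra A_2$. Then $\|x_1-x_2\|_{\HH}=\|J_{\gamma A_1}y-J_{\gamma A_2}y\|_{\HH}$ and $\|x_1^*-x_2^*\|_{\HH}=\gamma^{-1}\|J_{\gamma A_1}y-J_{\gamma A_2}y\|_{\HH}$, while $\|y\|_{\HH}\leq\|x_1\|_{\HH}+\gamma\|x_1^*\|_{\HH}\leq(1+\gamma)\rho$, hence $\|J_{\gamma A_1}y-J_{\gamma A_2}y\|_{\HH}\leq d_{\gamma,(1+\gamma)\rho}(A_1,A_2)$. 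The joint distance in $\gra A$ is therefore controlled by $\max\{1,\gamma^{-1}\}\,d_{\gamma,(1+\gamma)\rho}(A_1,A_2)$. Swapping the roles of $A_1$ and $A_2$ gives the bound on the second excess, and taking the maximum yields the desired inequality.

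For item~\ref{l:58ii}, I would start with $x\in B(0;\delta)$ and apply Minty parametrization to $A_1$: set $p=J_{\gamma A_1}x$ and $p^*=\gamma^{-1}(x-p)\in A_1p$. Using that $J_{\gamma A_1}$ is nonexpansive and $\moyo{A_1}{\gamma}$ is $\gamma^{-1}$-Lipschitz, I obtain $\|p\|_{\HH}\leq\delta+\|J_{\gamma A_1}0\|_{\HH}$ and $\|p^*\|_{\HH}\leq\gamma^{-1}(\delta+\|J_{\gamma A_1}0\|_{\HH})$, so $(p,p^*)$ lives in the graph ball of radius $\rho$ defined in the statement. By the definition of $\haus_{\rho}$, for every $\epsilon>0$ there exists $(q,q^*)\in\gra A_2$ with $\|p-q\|_{\HH}$ and $\|p^*-q^*\|_{\HH}$ both at most $\haus_{\rho}(A_1,A_2)+\epsilon$. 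Since $x=p+\gamma p^*$, the triangle inequality gives $\|x-(q+\gamma q^*)\|_{\HH}\leq\|p-q\|_{\HH}+\gamma\|p^*-q^*\|_{\HH}$, and nonexpansiveness of $J_{\gamma A_2}$ together with $J_{\gamma A_2}(q+\gamma q^*)=q$ yields $\|J_{\gamma A_2}x-q\|_{\HH}\leq\|p-q\|_{\HH}+\gamma\|p^*-q^*\|_{\HH}$. Combining with $\|J_{\gamma A_1}x-J_{\gamma A_2}x\|_{\HH}\leq\|p-q\|_{\HH}+\|q-J_{\gamma A_2}x\|_{\HH}$ produces $\|J_{\gamma A_1}x-J_{\gamma A_2}x\|_{\HH}\leq(2+\gamma)(\haus_{\rho}(A_1,A_2)+\epsilon)$. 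Letting $\epsilon\to0$ and taking the supremum over $x\in B(0;\delta)$ completes the proof.

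The main bookkeeping obstacle is choosing the norm on $\HH\times\HH$ consistently, since the factor $\max\{1,\gamma^{-1}\}$ in~\ref{l:58i} and the formula for $\rho$ in~\ref{l:58ii} correspond to the $\ell^\infty$ product norm on graphs; once that convention is fixed, the remaining estimates are direct applications of Minty's parametrization and resolvent nonexpansiveness.
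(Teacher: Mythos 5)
The paper offers no proof of this lemma: it is imported verbatim from \cite[Propositions~1.1 and 1.2]{Atto93}, so there is no internal argument to compare yours against. Your proof is correct and is essentially the standard one behind the cited result. Both halves rest on Minty's parametrization exactly as you use it: in \ref{l:58i} you send a truncated graph point $(x_1,x_1^*)$ of $A_1$ to $y=x_1+\gamma x_1^*$ with $\|y\|_{\HH}\leq(1+\gamma)\rho$ and pair it with $\brk1{J_{\gamma A_2}y,\gamma^{-1}(y-J_{\gamma A_2}y)}\in\gra A_2$, which yields the two component gaps $\|J_{\gamma A_1}y-J_{\gamma A_2}y\|_{\HH}$ and $\gamma^{-1}\|J_{\gamma A_1}y-J_{\gamma A_2}y\|_{\HH}$; in \ref{l:58ii} the bounds $\|p\|_{\HH}\leq\delta+\|J_{\gamma A_1}0\|_{\HH}$ and $\|p^*\|_{\HH}\leq\gamma^{-1}(\delta+\|J_{\gamma A_1}0\|_{\HH})$ place $(p,p^*)$ in the truncated graph, and the chain $\|J_{\gamma A_1}x-J_{\gamma A_2}x\|_{\HH}\leq 2\|p-q\|_{\HH}+\gamma\|p^*-q^*\|_{\HH}$ gives precisely the factor $2+\gamma$. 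The one genuine subtlety is the one you flag yourself: the constants $\max\{1,\gamma^{-1}\}$ and the formula for $\rho$ in \ref{l:58ii} come out exactly only if graph points and the truncation ball $B(0;\rho)$ in $\HH\times\HH$ are measured with the $\ell^\infty$ product norm $\max\{\|x\|_{\HH},\|x^*\|_{\HH}\}$; with the canonical Hilbertian product norm, \ref{l:58i} would acquire an extra factor of $\sqrt{2}$. Since that is the convention under which the cited propositions hold, your proof is complete once it is stated explicitly.
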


\subsection{Convergence of resolvent compositions}

We proceed to study the graph-convergence and convergence with
respect to the $\rho$-Hausdorff distance of resolvent compositions.

\begin{proposition}
\label{p:70}
Let $(L_n)_{n\in\NN}$ and $L$ be in $\BL(\HH,\GG)$, let
$(B_n)_{n\in\NN}$ and $B$ be maximally monotone operators from
$\GG$ to $2^\GG$, and let $(\gamma_n)_{n\in\NN}$ and $\gamma$ be in
$\RPP$. Suppose that $L_n\to L$, $B_n\xrightarrow{g}B$,
$\gamma_n\to\gamma$, and $(\forall n\in\NN)$ $\|L_n\|\leq 1$.
Then the following hold:
\begin{enumerate}
\item
\label{p:70i}
$L_n\proxc{\gamma_n}B_n\xrightarrow{g}L\proxc{\gamma}B$.
\item
\label{p:70ii}
$L_n\proxcc{\gamma_n}B_n\xrightarrow{g}L\proxcc{\gamma}B$.
\end{enumerate}
\end{proposition}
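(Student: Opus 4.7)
\smallskip

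\noindent\textbf{Proof plan.} My plan is to apply Lemma~\ref{l:57} to reduce graph-convergence to pointwise convergence of resolvents, exploiting the explicit formulas in Proposition~\ref{p:2}. The cleanest identity, Proposition~\ref{p:2}\ref{p:2i}, requires the inside and outside parameters to match; by Proposition~\ref{p:1}\ref{p:1iv} this matching occurs exactly when I consider the scaled operator $\gamma_nA_n$ with $A_n=L_n\proxc{\gamma_n}B_n$, yielding
\begin{equation*}
J_{\gamma_n A_n}=L_n^*\circ J_{\gamma_n B_n}\circ L_n,
\end{equation*}
and similarly for $A=L\proxc{\gamma}B$. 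My strategy is therefore to first prove $\gamma_nA_n\xrightarrow{g}\gamma A$ and then recover $A_n\xrightarrow{g}A$ via Lemma~\ref{l:69}\ref{l:69ii} applied to $1/\gamma_n\to 1/\gamma$.

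For part~\ref{p:70i}, I would begin by noting that $\|L\|\leq 1$ follows from $\|L_n\|\leq 1$ and $L_n\to L$ in operator norm, so Corollary~\ref{c:15}\ref{c:15ii} guarantees that both $A_n$ and $A$ are maximally monotone, justifying the use of Lemma~\ref{l:57}. Next, $\gamma_n B_n\xrightarrow{g}\gamma B$ by Lemma~\ref{l:69}\ref{l:69ii}, whence Lemma~\ref{l:57} yields $J_{\gamma_n B_n}y\to J_{\gamma B}y$ for every $y\in\GG$. I then handle the moving argument $L_nx$ by the triangle inequality
\begin{equation*}
\|J_{\gamma_n B_n}(L_nx)-J_{\gamma B}(Lx)\|_{\GG}
\leq \|L_nx-Lx\|_{\GG}+\|J_{\gamma_n B_n}(Lx)-J_{\gamma B}(Lx)\|_{\GG},
\end{equation*}
using the $1$-Lipschitz continuity of $J_{\gamma_n B_n}$. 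Applying $L_n^*\to L^*$ in norm to the resulting bounded sequence gives $J_{\gamma_nA_n}x\to J_{\gamma A}x$ pointwise, so Lemma~\ref{l:57} delivers $\gamma_nA_n\xrightarrow{g}\gamma A$, and a final application of Lemma~\ref{l:69}\ref{l:69ii} with $1/\gamma_n\to 1/\gamma$ closes part~\ref{p:70i}.

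Part~\ref{p:70ii} I would derive from part~\ref{p:70i} via the defining identity $L_n\proxcc{\gamma_n}B_n=(L_n\proxc{1/\gamma_n}B_n^{-1})^{-1}$. Since $B_n$ is maximally monotone, so is $B_n^{-1}$, and Lemma~\ref{l:69}\ref{l:69i} supplies $B_n^{-1}\xrightarrow{g}B^{-1}$. Applying part~\ref{p:70i} with $B_n^{-1}$ and $1/\gamma_n$ in place of $B_n$ and $\gamma_n$ yields $L_n\proxc{1/\gamma_n}B_n^{-1}\xrightarrow{g}L\proxc{1/\gamma}B^{-1}$, and a second application of Lemma~\ref{l:69}\ref{l:69i} concludes. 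The main obstacle is the step where moving points $L_nx$ are fed into the varying resolvents $J_{\gamma_n B_n}$: Lemma~\ref{l:57} only furnishes convergence at \emph{fixed} arguments, and it is the uniform nonexpansiveness of resolvents, used through the triangle inequality above, that bridges this gap cleanly.
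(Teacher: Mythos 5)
Your proof is correct and follows essentially the same route as the paper: reduce to pointwise resolvent convergence via Lemma~\ref{l:57}, use Proposition~\ref{p:2}\ref{p:2i} together with nonexpansiveness of $J_{\gamma_nB_n}$ and Lemma~\ref{l:69} to handle the moving data, and obtain \ref{p:70ii} by inversion. The only (harmless) difference is bookkeeping: the paper fixes the resolvent parameter at $\gamma$ and absorbs the $\gamma_n$--$\gamma$ mismatch through the resolvent-parameter estimate of \cite[Proposition~23.31(iii)]{Livre1}, whereas you work with $J_{\gamma_n(L_n\proxc{\gamma_n}B_n)}$ directly and transfer back with Lemma~\ref{l:69}\ref{l:69ii}; your explicit remark that $\|L\|\leq 1$ is inherited in the limit is a welcome detail the paper leaves implicit.
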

\begin{proof}
We recall from Corollary~\ref{c:15}\ref{c:15ii} that the operators
$(L_n\proxc{\gamma_n}B_n)_{n\in\NN}$, $L\proxc{\gamma}B$,
$(L_n\proxcc{\gamma_n}B_n)_{n\in\NN}$, and $L\proxcc{\gamma}B$ are
maximally monotone. Therefore, by Lemma~\ref{l:57}, it is
enough to verify the convergence of the resolvent of these
operators.

\ref{p:70i}: Let $x\in\HH$ and set $(\forall n\in\NN)$
$\theta_n=1-\gamma/\gamma_n$. It follows from
\cite[Proposition~23.31(iii)]{Livre1},
Proposition~\ref{p:2}\ref{p:2i}, and Lemma~\ref{l:69} that
\begin{align}
\label{e:p70a}
\|J_{\gamma\brk1{L_n\proxc{\gamma_n}B_n}}x
-J_{\gamma\brk1{L\proxc{\gamma}B}}x\|_{\HH}&\leq
\|J_{\gamma\brk1{L_n\proxc{\gamma_n}B_n}}x
-J_{\gamma_n\brk1{L_n\proxc{\gamma_n}B_n}}x\|_{\HH}
+\|J_{\gamma_n\brk1{L_n\proxc{\gamma_n}B_n}}x
-J_{\gamma\brk1{L\proxc{\gamma}B}}x\|_{\HH}\nonumber\\
&\leq\lvert\theta_n\rvert\,
\|x-J_{\gamma_n\brk1{L_n\proxc{\gamma_n}B_n}}x\|_{\HH}
+\|L_n^*\brk1{J_{\gamma B_n}(L_nx)}-
L^*\brk1{J_{\gamma B}(Lx)}\|_{\HH}\nonumber\\
&=\lvert\theta_n\rvert\,
\|x-L_n^*\brk1{J_{\gamma_nB_n}(L_nx)}\|_{\HH}
+\|L_n^*\brk1{J_{\gamma B_n}(L_nx)}-
L^*\brk1{J_{\gamma B}(Lx)}\|_{\HH}.
\end{align}
Further, nonexpansiveness of $J_{\gamma_nB_n}$ implies that
\begin{align}
\label{e:p70b}
\|J_{\gamma_n B_n}(L_nx)-J_{\gamma B}(Lx)\|_{\GG}&\leq
\|J_{\gamma_n B_n}(L_nx)-J_{\gamma_n B_n}(Lx)\|_{\GG}
+\|J_{\gamma_n B_n}(Lx)-J_{\gamma B}(Lx)\|_{\GG}\nonumber\\
&\leq\|L_nx-Lx\|_{\GG}
+\|J_{\gamma_n B_n}(Lx)-J_{\gamma B}(Lx)\|_{\GG}.
\end{align}
On the other hand, by Lemma~\ref{l:69}\ref{l:69ii},
$\gamma_nB\xrightarrow{g}\gamma B$. Since $\theta_n\to 0$ and
$L_n\to L$, we combine \eqref{e:p70a} and \eqref{e:p70b} to obtain 
$J_{\gamma\brk1{L_n\proxc{\gamma_n}B_n}}x\to
J_{\gamma\brk1{L\proxc{\gamma}B}}x$. Therefore, the conclusion
follows from Lemma~\ref{l:57}.

\ref{p:70ii}: By Lemma~\ref{l:69}\ref{l:69i},
$B_n^{-1}\xrightarrow{g}B^{-1}$. Therefore, \ref{p:70i} yields
$L_n\proxc{1/\gamma_n}B_n^{-1}\xrightarrow{g}
L\proxc{1/\gamma}B^{-1}$. Altogether, by Definition~\ref{d:1} and
Lemma~\ref{l:69}\ref{l:69i} once more, 
\begin{equation}
L_n\proxcc{\gamma_n}B_n=\brk1{L_n\proxc{1/\gamma_n}B_n^{-1}}^{-1}
\xrightarrow{g}\brk1{L\proxc{1/\gamma}B^{-1}}^{-1}
=L\proxcc{\gamma}B,
\end{equation}
as asserted.
\end{proof}

\begin{proposition}
\label{p:510}
Let $(L_n)_{n\in\NN}$ and $L$ be in $\BL(\HH,\GG)$, let
$B\colon\GG\to2^\GG$ be maximally monotone, and let
$(\gamma_n)_{n\in\NN}$ be a sequence in $\RPP$. Suppose that
$L_n\to L$ and that $(\forall n\in\NN)$ $\|L_n\|\leq 1$. Then the
following hold:
\begin{enumerate}
\item
\label{p:510i}
Suppose that $\gamma_n\downarrow 0$. Then the following are
satisfied:
\begin{enumerate}
\item
\label{p:510ia}
$L_n\proxc{}(\gamma_n B)\xrightarrow{g}L\proxc{}N_{\cdom B}$.
\item
\label{p:510ib}
$L_n\proxcc{}(\gamma_n B)\xrightarrow{g}L\proxcc{}N_{\cdom B}$.
\end{enumerate}
\item
\label{p:510ii}
Suppose that $\gamma_n\uparrow\pinf$ and that $\zer B\neq\emp$.
Then the following are satisfied:
\begin{enumerate}
\item
\label{p:510iia}
$L_n\proxc{}(\gamma_n B)\xrightarrow{g}L\proxc{}N_{\zer B}$.
\item
\label{p:510iib}
$L_n\proxcc{}(\gamma_n B)\xrightarrow{g}L\proxcc{}N_{\zer B}$.
\end{enumerate}
\end{enumerate}
\end{proposition}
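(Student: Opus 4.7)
My plan is to reduce both parts of the proposition to a direct application of Proposition~\ref{p:70}, after identifying the graph-limit of the sequence $(\gamma_nB)_{n\in\NN}$. Since the parameter of the resolvent composition is fixed at $1$, I view $L_n\proxc{}(\gamma_nB)$ and $L_n\proxcc{}(\gamma_nB)$ as resolvent (co)compositions with \emph{constant} parameter $\gamma=1$ of the operators $A_n=\gamma_nB$. Proposition~\ref{p:70} then applies as soon as $A_n\xrightarrow{g}A$, where $A$ will be the appropriate normal cone, because the hypotheses on $L_n$ are already in force and $A$ is maximally monotone (normal cones of nonempty closed convex sets are maximally monotone; note that $\cdom B$ is closed convex since $B$ is maximally monotone, and $\zer B$ is also closed convex in case~\ref{p:510ii}).

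The key lemma to establish is therefore the graph-convergence of $(\gamma_nB)_{n\in\NN}$. Thanks to Lemma~\ref{l:57}\ref{l:57iii} applied with the parameter $1$, it suffices to verify pointwise convergence of the resolvents $J_{\gamma_nB}$. In case~\ref{p:510i}, where $\gamma_n\downarrow 0$, I invoke the classical result that $J_{\gamma_nB}x\to\proj_{\cdom B}x$ for every $x\in\HH$ (see, e.g., \cite[Theorem~23.48]{Livre1}). Since $\proj_{\cdom B}=J_{N_{\cdom B}}$, Lemma~\ref{l:57} yields $\gamma_nB\xrightarrow{g}N_{\cdom B}$. In case~\ref{p:510ii}, where $\gamma_n\uparrow\pinf$ and $\zer B\neq\emp$, I invoke the companion classical result that $J_{\gamma_nB}x\to\proj_{\zer B}x$ pointwise (see, e.g., \cite[Corollary~23.47]{Livre1}); again $\proj_{\zer B}=J_{N_{\zer B}}$, so Lemma~\ref{l:57} gives $\gamma_nB\xrightarrow{g}N_{\zer B}$.

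With these graph-convergences in hand, I finish by applying Proposition~\ref{p:70}\ref{p:70i} and~\ref{p:70ii} with constant parameter sequence $\gamma_n=\gamma=1$, operator sequence $L_n\to L$ satisfying $\|L_n\|\leq 1$, and maximally monotone sequence $\gamma_nB$ graph-converging to $N_{\cdom B}$ (resp.\ $N_{\zer B}$). Proposition~\ref{p:70}\ref{p:70i} immediately delivers~\ref{p:510ia} (resp.~\ref{p:510iia}), and Proposition~\ref{p:70}\ref{p:70ii} delivers~\ref{p:510ib} (resp.~\ref{p:510iib}).

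The main obstacle is simply locating the right citations for the pointwise limits of $J_{\gamma B}$ as $\gamma\downarrow 0$ and $\gamma\uparrow\pinf$; the rest of the argument is a routine reduction to the already-proved Proposition~\ref{p:70}. One subtlety worth flagging is that in case~\ref{p:510ii} the hypothesis $\zer B\neq\emp$ is essential both to guarantee that $N_{\zer B}$ is well defined as a maximally monotone operator and to ensure the pointwise convergence of $J_{\gamma_nB}$ to a projection; without it, the resolvents may escape to infinity.
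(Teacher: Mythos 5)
Your proposal is correct and follows essentially the same route as the paper: establish $\gamma_nB\xrightarrow{g}N_{\cdom B}$ (resp.\ $N_{\zer B}$) by combining the classical pointwise limits of $J_{\gamma_n B}$ with Lemma~\ref{l:57}, then invoke Proposition~\ref{p:70} with the constant parameter $1$. The only cosmetic caveat is that Lemma~\ref{l:57} has no labelled sub-items, so the reference to its third item should just point to the lemma itself.
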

\begin{proof}
\ref{p:510i}:
Let $y\in\GG$. We recall from \cite[Corollary~21.14]{Livre1} that
$\cdom B$ is closed and convex. Further, by 
\cite[Theorem~23.48]{Livre1},
$J_{\gamma_n B}y\to\proj_{\cdom B}y=J_{N_{\cdom B}}y$. Therefore,
it follows from Lemma~\ref{l:57} that
$\gamma_nB\xrightarrow{g}N_{\cdom B}$, and the conclusion is a
consequence of Proposition~\ref{p:70}.

\ref{p:510ii}: 
Let $y\in\GG$. We recall from \cite[Proposition~23.39]{Livre1} that
$\zer B$ is closed and convex. Further, by
\cite[Theorem~23.48]{Livre1}, 
$J_{\gamma_n B}y\to\proj_{\zer B}y=J_{N_{\zer B}}y$. Therefore,
it follows from Lemma~\ref{l:57} that
$\gamma_nB\xrightarrow{g}N_{\zer B}$, and the conclusion is a
consequence of Proposition~\ref{p:70}.
\end{proof}

The following proposition shows that, for a fixed $\gamma\in\RPP$,
resolvent compositions are nonexpansive with respect to
$d_{\gamma,\delta}$.

\begin{proposition}
\label{p:73}
Suppose that $L\in\BL(\HH,\GG)$ satisfies $\|L\|\leq 1$, let
$B_1\colon\GG\to2^\GG$ and $B_2\colon\GG\to2^\GG$ be maximally
monotone, let $\gamma\in\RPP$, and let $\delta\in\RPP$. Then the
following hold:
\begin{enumerate}
\item
\label{p:73i}
$d_{\gamma,\delta}(L\proxc{\gamma}B_1,L\proxc{\gamma}B_2)
=d_{\gamma,\delta}(L\proxcc{\gamma}B_1,L\proxcc{\gamma}B_2)$.
\item
\label{p:73ii}
$d_{\gamma,\delta}(L\proxc{\gamma}B_1,L\proxc{\gamma}B_2)
\leq\|L\|\,d_{\gamma,\|L\|\delta}(B_1,B_2)$.
\item
\label{p:73iii}
$d_{\gamma,\delta}(L\proxcc{\gamma}B_1,L\proxcc{\gamma}B_2)
\leq\|L\|\,d_{\gamma,\|L\|\delta}(B_1,B_2)$.
\end{enumerate}
\end{proposition}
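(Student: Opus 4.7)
The plan is to reduce everything to the resolvent formulas of Proposition~\ref{p:2}, which make both $J_{\gamma(L\proxc{\gamma}B)}$ and $J_{\gamma(L\proxcc{\gamma}B)}$ explicit in terms of $J_{\gamma B}$, and then exploit that the dependence of the cocomposition resolvent on $B$ differs from that of the composition resolvent only by a term independent of $B$.

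For item \ref{p:73i}, I would write, by Proposition~\ref{p:2},
\begin{equation}
(\forall x\in\HH)\quad J_{\gamma(L\proxcc{\gamma}B_k)}x - J_{\gamma(L\proxc{\gamma}B_k)}x = x-L^*Lx\qquad(k\in\{1,2\}),
\end{equation}
so that the $B$-independent correction cancels upon subtraction and
\begin{equation}
J_{\gamma(L\proxcc{\gamma}B_1)}x-J_{\gamma(L\proxcc{\gamma}B_2)}x = J_{\gamma(L\proxc{\gamma}B_1)}x-J_{\gamma(L\proxc{\gamma}B_2)}x.
\end{equation}
Taking norms and then the supremum over $x\in B(0;\delta)$ in view of \eqref{e:hausd} yields \ref{p:73i}.

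For item \ref{p:73ii}, I would invoke Proposition~\ref{p:2}\ref{p:2i} to get
\begin{equation}
\|J_{\gamma(L\proxc{\gamma}B_1)}x-J_{\gamma(L\proxc{\gamma}B_2)}x\|_{\HH}
=\|L^*(J_{\gamma B_1}(Lx)-J_{\gamma B_2}(Lx))\|_{\HH}\leq \|L\|\,\|J_{\gamma B_1}(Lx)-J_{\gamma B_2}(Lx)\|_{\GG}.
\end{equation}
For $x\in B(0;\delta)$ one has $\|Lx\|_{\GG}\leq\|L\|\delta$, so $Lx\in B(0;\|L\|\delta)$, and hence
\begin{equation}
\|J_{\gamma B_1}(Lx)-J_{\gamma B_2}(Lx)\|_{\GG}\leq d_{\gamma,\|L\|\delta}(B_1,B_2).
\end{equation}
Taking the supremum on the left and using \eqref{e:hausd} produces the claimed bound. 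Finally, item \ref{p:73iii} follows immediately by combining \ref{p:73i} and \ref{p:73ii}.

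I do not expect any serious obstacle here: everything hinges on the explicit resolvent identities already established in Proposition~\ref{p:2}, together with the elementary facts that $L^*$ has operator norm $\|L\|$ and that $L$ maps $B(0;\delta)$ into $B(0;\|L\|\delta)$. The only mild subtlety is the cancellation argument in \ref{p:73i}; one must be careful to recognize that the affine term $x-L^*Lx$ depends neither on $B_1$ nor on $B_2$, and therefore disappears in the difference that defines $d_{\gamma,\delta}$.
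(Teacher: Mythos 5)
Your proposal is correct and follows essentially the same route as the paper: part (i) via the cancellation of the $B$-independent term $\Id_{\HH}-L^*\circ L$ coming from Proposition~\ref{p:2}, part (ii) via $\|L^*\|=\|L\|$ and $L\bigl(B(0;\delta)\bigr)\subset B(0;\|L\|\delta)$, and part (iii) by combining the two. The only detail the paper adds is an explicit appeal to Corollary~\ref{c:15}\ref{c:15ii} to record that the composed operators are maximally monotone, which is a negligible omission on your part.
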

\begin{proof}
We recall from Corollary~\ref{c:15}\ref{c:15ii} that, for every
$k\in\{1,2\}$, $L\proxc{\gamma}B_k$ and
$L\proxcc{\gamma}B_k$ are maximally monotone.

\ref{p:73i}: By Proposition~\ref{p:2},
$J_{\gamma\brk1{L\proxc{\gamma}B_1}}-
J_{\gamma\brk1{L\proxc{\gamma}B_2}}
=J_{\gamma\brk1{L\proxcc{\gamma}B_1}}-
J_{\gamma\brk1{L\proxcc{\gamma}B_2}}$. Therefore, the conclusion
follows from \eqref{e:hausd}.

\ref{p:73ii}: Let $x\in B(0;\delta)$. It follows from
Proposition~\ref{p:2}\ref{p:2i} that
\begin{align}
\label{e:p73}
\|J_{\gamma\brk1{L\proxc{\gamma}B_1}}x-
J_{\gamma\brk1{L\proxc{\gamma}B_2}}x\|_{\HH}
&=\|L^*\brk1{J_{\gamma B_1}(Lx)}
-L^*\brk1{J_{\gamma B_2}(Lx)}\|_{\HH}
\nonumber\\
&\leq\|L\|\,\|J_{\gamma B_1}(Lx)-J_{\gamma B_2}(Lx)\|_{\GG}
\nonumber\\
&\leq\|L\|\sup_{u\in B(0;\|L\|\delta)}
\|J_{\gamma B_1}u-J_{\gamma B_2}u\|_{\GG}\nonumber\\
&=\|L\|\,d_{\gamma,\|L\|\delta}(B_1,B_2).
\end{align}
Therefore, by taking the supremum over all $x\in B(0;\delta)$, we
obtain the assertion.

\ref{p:73iii}: A consequence of \ref{p:73i} and \ref{p:73ii}.
\end{proof}

\begin{proposition}
Suppose that $L\in\BL(\HH,\GG)$ satisfies $\|L\|\leq 1$, let
$B_1\colon\GG\to2^\GG$ and $B_2\colon\GG\to2^\GG$ be maximally
monotone, let $\gamma\in\RPP$, and let $\rho\in\RPP$. Then
\begin{equation}
\haus_{\rho}(L\proxc{\gamma}B_1,L\proxc{\gamma}B_2)\leq
\max\{1,\gamma^{-1}\}\|L\|d_{\gamma,\|L\|(1+\gamma)\rho}(B_1,B_2).
\end{equation}
\end{proposition}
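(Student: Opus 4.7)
The plan is to chain together two results already in hand. First, since $\|L\|\leq 1$ and $B_1,B_2$ are maximally monotone, Corollary~\ref{c:15}\ref{c:15ii} guarantees that $L\proxc{\gamma}B_1$ and $L\proxc{\gamma}B_2$ are maximally monotone, so Lemma~\ref{l:58}\ref{l:58i} is applicable to the pair $(L\proxc{\gamma}B_1,L\proxc{\gamma}B_2)$. This yields
\begin{equation*}
\haus_{\rho}(L\proxc{\gamma}B_1,L\proxc{\gamma}B_2)\leq
\max\{1,\gamma^{-1}\}\,d_{\gamma,(1+\gamma)\rho}
(L\proxc{\gamma}B_1,L\proxc{\gamma}B_2).
\end{equation*}

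Next, I would invoke Proposition~\ref{p:73}\ref{p:73ii} with $\delta=(1+\gamma)\rho$, which gives the nonexpansivity-type estimate
\begin{equation*}
d_{\gamma,(1+\gamma)\rho}(L\proxc{\gamma}B_1,L\proxc{\gamma}B_2)
\leq\|L\|\,d_{\gamma,\|L\|(1+\gamma)\rho}(B_1,B_2).
\end{equation*}
Substituting this bound into the previous display produces exactly the advertised inequality.

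There is essentially no obstacle: all of the analytical work was carried out in Proposition~\ref{p:73} (comparison of resolvents through $L^*\circ J_{\gamma B_k}\circ L$ using Proposition~\ref{p:2}\ref{p:2i}) and in Lemma~\ref{l:58} (conversion between $d_{\gamma,\delta}$ and $\haus_{\rho}$). The statement is the straightforward composition of these two bounds with matched radii, so the only small care needed is tracking the scaling: Lemma~\ref{l:58}\ref{l:58i} inflates the radius from $\rho$ to $(1+\gamma)\rho$, and Proposition~\ref{p:73}\ref{p:73ii} further rescales by $\|L\|$, explaining the radius $\|L\|(1+\gamma)\rho$ on the right-hand side.
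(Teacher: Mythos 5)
Your proposal is correct and follows exactly the paper's own argument: the paper's proof is precisely the combination of Corollary~\ref{c:15}\ref{c:15ii}, Lemma~\ref{l:58}\ref{l:58i}, and Proposition~\ref{p:73}\ref{p:73ii}, with the same radius bookkeeping you describe.
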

\begin{proof}
Combine Corollary~\ref{c:15}\ref{c:15ii},
Lemma~\ref{l:58}\ref{l:58i}, and
Proposition~\ref{p:73}\ref{p:73ii}.
\end{proof}

\begin{proposition}
\label{p:74}
Suppose that $L\in\BL(\HH,\GG)$ satisfies $\|L\|\leq 1$ and let
$B\colon\GG\to2^\GG$ be maximally monotone. Assume that 
$L^*\circ B\circ L$ is maximally monotone. Then the following hold:
\begin{enumerate}
\item
\label{p:74i}
$L\proxcc{\gamma}B\xrightarrow{g}L^*\circ B\circ L$ as 
$0<\gamma\to 0$.
\item
\label{p:74ii}
Suppose that one of the following is satisfied:
\begin{enumerate}
\item
\label{p:74iia}
$\ran(B\circ L)$ is bounded.
\item
\label{p:74iib}
There exists $S\in\BL(\HH,\GG)$ such that $S\circ L^*$ is
invertible.
\end{enumerate}
Then
\begin{equation}
(\forall\rho\in\RPP)\quad\lim_{\gamma\to 0}
\haus_{\rho}(L\proxcc{\gamma}B,L^*\circ B\circ L)=0.
\end{equation}
\end{enumerate}
\end{proposition}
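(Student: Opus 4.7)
The plan is to invoke Lemma~\ref{l:57}: since $M_\gamma:=L\proxcc{\gamma}B$ is maximally monotone by Corollary~\ref{c:15}\ref{c:15ii} and $M:=L^*\circ B\circ L$ is so by hypothesis, the graph-convergence asserted in~\ref{p:74i} reduces to showing, for some fixed $\lambda\in\RPP$ and every $x\in\HH$, that $J_{\lambda M_\gamma}x\to J_{\lambda M}x$ as $\gamma\downarrow 0$. Set $\Psi=\Id_{\GG}-L\circ L^*$, so that by Proposition~\ref{p:3}\ref{p:3ii} we have $M_\gamma=L^*\circ(B^{-1}+\gamma\Psi)^{-1}\circ L$. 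I would fix such $\lambda$ and $x$, put $v=J_{\lambda M}x$ and $v_\gamma=J_{\lambda M_\gamma}x$, and then unpack the resolvent inclusions to extract $u\in B(Lv)$ with $L^*u=(x-v)/\lambda$ and $u_\gamma\in B(Lv_\gamma-\gamma\Psi u_\gamma)$ with $L^*u_\gamma=(x-v_\gamma)/\lambda$.

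Applying the monotonicity of $B$ to the pairs $(Lv,u)$ and $(Lv_\gamma-\gamma\Psi u_\gamma,u_\gamma)$, together with the identity $L^*(u-u_\gamma)=(v_\gamma-v)/\lambda$, produces
\begin{equation*}
\|v-v_\gamma\|_{\HH}^2+\lambda\gamma\scal{\Psi u_\gamma}{u_\gamma}_{\GG}
\leq\lambda\gamma\scal{\Psi u_\gamma}{u}_{\GG}.
\end{equation*}
Since $\|L\|\leq 1$, $\Psi$ is self-adjoint and positive with $\|\Psi\|\leq 1$, and hence $\|\Psi u_\gamma\|_{\GG}^2\leq\|\Psi\|\scal{\Psi u_\gamma}{u_\gamma}_{\GG}$. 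Dropping the nonnegative $\|v-v_\gamma\|_{\HH}^2$ in the displayed inequality and invoking Cauchy--Schwarz yields $\|\Psi u_\gamma\|_{\GG}\leq\|\Psi\|\|u\|_{\GG}$; reinserting gives $\|v-v_\gamma\|_{\HH}^2\leq\lambda\gamma\|\Psi\|\|u\|_{\GG}^2\to 0$, establishing~\ref{p:74i}.

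For~\ref{p:74ii}, Lemma~\ref{l:58}\ref{l:58i} reduces convergence of $\haus_\rho(M_\gamma,M)$ to $d_{\lambda,(1+\lambda)\rho}(M_\gamma,M)\to 0$, which amounts to upgrading the previous estimate to a bound uniform for $x\in B(0;(1+\lambda)\rho)$. This hinges on a uniform bound for $\|u\|_{\GG}$: under~\ref{p:74iia}, $u\in B(Lv)\subset\ran(B\circ L)$ is bounded by hypothesis; under~\ref{p:74iib}, the invertibility of $S\circ L^*$ forces $L^*$ injective, so $u=(S\circ L^*)^{-1}(S((x-v)/\lambda))$ is bounded on bounded sets because $v=J_{\lambda M}x$ is nonexpansive in $x$. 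Either way, there exists $C=C(\rho,\lambda)$ with $\sup_{x\in B(0;(1+\lambda)\rho)}\|v_\gamma-v\|_{\HH}\leq C\sqrt{\gamma}\to 0$, which yields the conclusion.

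The main obstacle is the control of $\|\Psi u_\gamma\|_{\GG}$: a direct graph-approximation attempt, constructing $(x_\gamma,x_\gamma^*)\in\gra M_\gamma$ converging to a prescribed $(x,x^*)\in\gra M$, stumbles because $\gamma\Psi u$ need not lie in $\ran L$ and one cannot simply perturb $x$. The resolvent-based route above bypasses this obstruction by extracting the hidden coercivity of $\Psi=\Id_{\GG}-L\circ L^*$ from the monotonicity of $B$.
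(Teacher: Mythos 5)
Your proposal is correct and follows essentially the same route as the paper: reduce to pointwise resolvent convergence via Lemma~\ref{l:57}, unpack the two resolvent inclusions through Proposition~\ref{p:3}\ref{p:3ii}, apply the monotonicity of $B$ to the pairs $(Lv,u)$ and $(Lv_\gamma-\gamma\Psi u_\gamma,u_\gamma)$, and secure the uniform bound on $\|u\|_{\GG}$ from assumption \ref{p:74iia} or \ref{p:74iib} exactly as the paper does. The only difference is in your favor: by exploiting $0\leq\Psi\leq\Id_{\GG}$ to get $\|\Psi u_\gamma\|_{\GG}\leq\|\Psi\|\,\|u\|_{\GG}$ you reach the clean estimate $\|v-v_\gamma\|_{\HH}^2\leq\lambda\gamma\|\Psi\|\,\|u\|_{\GG}^2$, whereas the paper expands $\scal{\Psi y_\gamma}{y_\gamma-y}_{\GG}$ and solves a quadratic inequality in $\|p-p_\gamma\|_{\HH}$, which forces the restriction $\gamma\in\zeroun$ and produces a messier bound.
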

\begin{proof}
Let $\gamma\in\zeroun$ and recall from
Corollary~\ref{c:15}\ref{c:15ii} that $L\proxcc{\gamma}B$ is
maximally monotone. Let $x\in\HH$, let $\rho\in\RPP$, and suppose
that $x\in B(0;2\rho)$. Set $\Psi=\Id_{\GG}-L\circ L^*$, 
set $p=J_{L^*\circ B\circ L}x$, and set
$p_\gamma=J_{L\proxcc{\gamma}B}x$. We deduce from
Proposition~\ref{p:3}\ref{p:3ii} that
\begin{align}
\label{e:p71a}
x-p_\gamma\in\brk1{L\proxcc{\gamma}B}p_\gamma&\Leftrightarrow
x-p_\gamma\in L^*\brk2{\brk1{B^{-1}+\gamma\Psi}^{-1}(Lp_\gamma)}
\nonumber\\
&\Leftrightarrow
\begin{cases}
(\exi y_\gamma\in\GG)\quad x-p_\gamma=L^*y_\gamma\\
Lp_\gamma\in\brk1{B^{-1}+\gamma\Psi}y_\gamma
\end{cases}\nonumber\\
&\Leftrightarrow
\begin{cases}
(\exi y_\gamma\in\GG)\quad x-p_\gamma=L^*y_\gamma\\
y_\gamma\in B(Lp_\gamma-\gamma\Psi y_\gamma).
\end{cases}
\end{align}
On the other hand,
\begin{equation}
\label{e:p71b}
x-p\in L^*\brk1{B(Lp)}\Leftrightarrow
(\exi y\in\GG)\quad x-p=L^*y\quad\text{and}\quad y\in B(Lp).
\end{equation}
Altogether, monotonicity of $B$, \eqref{e:p71a}, and 
\eqref{e:p71b} yield
\begin{align}
\label{e:p71c}
\scal{(Lp_\gamma-\gamma\Psi y_\gamma)-Lp}{y_\gamma-y}_{\GG}\geq 0
&\Leftrightarrow
\scal{p_\gamma-p}{L^*(y_\gamma-y)}_{\HH}
-\gamma\scal{\Psi y_\gamma}{y_\gamma-y}_{\GG}\geq 0\nonumber\\
&\Leftrightarrow\scal{p_\gamma-p}{p-p_\gamma}_{\HH}
-\gamma\scal{\Psi y_\gamma}{y_\gamma-y}_{\GG}\geq 0\nonumber\\
&\Leftrightarrow\|p_\gamma-p\|_{\HH}^2
+\gamma\scal{\Psi y_\gamma}{y_\gamma-y}_{\GG}\leq 0.
\end{align}
Further, since $L^*y_\gamma=x-p_\gamma$ and $L^*y=x-p$, by
Cauchy--Schwarz inequality \cite[Fact~2.11]{Livre1},
\begin{align}
\label{e:p71d}
\scal{\Psi y_\gamma}{y_\gamma-y}_{\GG}
&=\scal{y_\gamma-L(x-p_\gamma)}{y_\gamma-y}_{\GG}
\nonumber\\
&=\scal{y_\gamma}{y_\gamma-y}_{\GG}
-\scal{x-p_\gamma}{L^*(y_\gamma-y)}_{\HH}
\nonumber\\
&=\|y_\gamma\|_{\GG}^2-\scal{y_\gamma}{y}_{\GG}
-\scal{x-p_\gamma}{p-p_\gamma}_{\HH}
\nonumber\\
&\geq\|y_\gamma\|_{\GG}^2-\|y_\gamma\|_{\GG}\,\|y\|_{\GG}
-\brk1{\scal{p-p_\gamma}{p-p_\gamma}_{\HH}
+\scal{x-p}{p-p_\gamma}_{\HH}}
\nonumber\\
&\geq\|y_\gamma\|_{\GG}^2-\|y_\gamma\|_{\GG}\,\|y\|_{\GG}
-\|p-p_\gamma\|_{\HH}^2-\|x-p\|_{\HH}\,\|p-p_\gamma\|_{\HH}
\nonumber\\
&\geq\min_{\alpha\in\RR}\brk1{\alpha^2-\alpha\|y\|_{\GG}}
-\|p-p_\gamma\|_{\HH}^2-\|x-p\|_{\HH}\,\|p-p_\gamma\|_{\HH}
\nonumber\\
&=-\frac{1}{4}\|y\|_{\GG}^2-\|p-p_\gamma\|_{\HH}^2
-\|x-p\|_{\HH}\,\|p-p_\gamma\|_{\HH}.
\end{align}
Set $\delta=2\rho+\|J_{L^*\circ B\circ L}0\|_{\HH}$. Since 
$L^*\circ B\circ L$ is maximally monotone, nonexpansiveness
of $J_{L^*\circ B\circ L}$ yields
\begin{equation}
\label{e:p71e}
\|p\|_{\HH}\leq\|J_{L^*\circ B\circ L}x
-J_{L^*\circ B\circ L}0\|_{\HH}
+\|J_{L^*\circ B\circ L}0\|_{\HH}
\leq\|x\|_{\HH}+\|J_{L^*\circ B\circ L}0\|_{\HH}
\leq 2\rho+\|J_{L^*\circ B\circ L}0\|_{\HH}=\delta.
\end{equation}
Thus, we combine \eqref{e:p71c}, \eqref{e:p71d}, and \eqref{e:p71e}
to deduce that
\begin{align}
\label{e:p71f}
&\|p-p_\gamma\|_{\HH}^2-\frac{\gamma}{4}\|y\|_{\GG}^2
-\gamma\|p-p_\gamma\|_{\HH}^2
-\gamma\|x-p\|_{\HH}\,\|p-p_\gamma\|_{\HH}\leq 0
\nonumber\\
&\Leftrightarrow
(1-\gamma)\|p-p_\gamma\|_{\HH}^2
-\gamma\|x-p\|_{\HH}\,\|p-p_\gamma\|_{\HH}
-\frac{\gamma}{4}\|y\|_{\GG}^2\leq 0\nonumber\\
&\Leftrightarrow
\|p-p_\gamma\|_{\HH}^2-\frac{\gamma}{1-\gamma}
\|x-p\|_{\HH}\,\|p-p_\gamma\|_{\HH}
-\frac{\gamma}{4(1-\gamma)}\|y\|_{\GG}^2\leq 0\nonumber\\
&\Rightarrow
\|p-p_\gamma\|_{\HH}^2-\frac{\gamma}{1-\gamma}
(\|x\|_{\HH}+\|p\|_{\HH})\|p-p_\gamma\|_{\HH}
-\frac{\gamma}{4(1-\gamma)}\|y\|_{\GG}^2\leq 0\nonumber\\
&\Rightarrow
\|p-p_\gamma\|_{\HH}^2
-\frac{\gamma}{1-\gamma}(2\rho+\delta)\|p-p_\gamma\|_{\HH}
-\frac{\gamma}{4(1-\gamma)}\|y\|_{\GG}^2\leq 0\nonumber\\
&\Rightarrow
\brk3{\|p-p_\gamma\|_{\HH}-\frac{\gamma}{2(1-\gamma)}
(2\rho+\delta)}^2
\leq\frac{\gamma^2}{4(1-\gamma)^2}(2\rho+\delta)^2
+\frac{\gamma}{4(1-\gamma)}\|y\|_{\GG}^2\nonumber\\
&\Rightarrow
\|p-p_\gamma\|_{\HH}
\leq\brk3{\frac{\gamma^2}{4(1-\gamma)^2}(2\rho+\delta)^2
+\frac{\gamma}{4(1-\gamma)}\|y\|_{\GG}^2}^{1/2}
+\frac{\gamma}{2(1-\gamma)}(2\rho+\delta).
\end{align}

\ref{p:74i}: By \eqref{e:p71f},
$J_{L\proxcc{\gamma}B}x\to J_{L^*\circ B\circ L}x$ as
$0<\gamma\to 0$, and the conclusion follows from Lemma~\ref{l:57}.

\ref{p:74ii}: Assumption~\ref{p:74iia} implies that there
exits $\eta\in\RPP$ such that, for every $z\in\ran(B\circ L)$,
$\|z\|_{\GG}\leq\eta$. In particular, \eqref{e:p71b} yields
$\|y\|_{\GG}\leq\eta$. On the other hand,
Assumption~\ref{p:74iib} and \eqref{e:p71b} imply that
$y=(S\circ L^*)^{-1}(S(x-p))$. Thus,
$\|y\|_{\GG}\leq\|(S\circ L^*)^{-1}\|\,\|S\|(2\rho+\delta)$.
Therefore, either Assumption~\ref{p:74iia} or
Assumption~\ref{p:74iib} implies that there exists $\eta\in\RPP$
such that $\|y\|_{\GG}\leq\eta$. Altogether, we deduce from
Lemma~\ref{l:58}\ref{l:58i} and \eqref{e:p71f} that
\begin{align}
\haus_{\rho}(L\proxcc{\gamma}B,L^*\circ B\circ L)&\leq
d_{1,2\rho}(L\proxcc{\gamma}B,L^*\circ B\circ L)\nonumber\\
&=\sup_{u\in B(0;2\rho)}
\|J_{L\proxcc{\gamma}B}u-J_{L^*\circ B\circ L}u\|_{\HH}
\nonumber\\
&\leq\brk3{\frac{\gamma^2}{4(1-\gamma)^2}(2\rho+\delta)^2
+\frac{\gamma}{4(1-\gamma)}\eta^2}^{1/2}
+\frac{\gamma}{2(1-\gamma)}(2\rho+\delta)
\nonumber\\
&\to 0\;\;\text{as}\;\;0<\gamma\to 0,
\end{align}
which completes the proof.
\end{proof}

\begin{corollary}
\label{c:72}
Let $0\neq p\in\NN$ and, for every $k\in\{1,\ldots,p\}$,
let $\GG_k$ be a real Hilbert space, let $L_k\in\BL(\HH,\GG_k)$, 
let $B_k\colon\GG_k\to2^{\GG_k}$ be maximally monotone, and let
$\alpha_k\in\RPP$. Suppose that
$\sum_{k=1}^p\alpha_k\|L_k\|^2\leq 1$ and that 
$\sum_{k=1}^p\alpha_kL_k^*\circ B_k\circ L_k$ is maximally
monotone. Then
\begin{equation}
\Rcm{\gamma}(B_k,L_k)_{1\leq k\leq p}\xrightarrow{g}
\sum_{k=1}^p\alpha_kL_k^*\circ B_k\circ L_k
\;\;\text{as}\;\;0<\gamma\to 0.
\end{equation}
\end{corollary}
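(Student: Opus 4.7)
The plan is to recast $\Rcm{\gamma}(B_k,L_k)_{1\leq k\leq p}$ as a single resolvent cocomposition $L\proxcc{\gamma}B$ on the product space, so that Proposition~\ref{p:74}\ref{p:74i} delivers the conclusion. Concretely, I would let $\GG=\bigoplus_{k=1}^p\GG_k$ and define $L$ as in \eqref{e:mix3} and $B$ as in \eqref{e:mix4}; Example~\ref{ex:mix} already records the identity $L\proxcc{\gamma}B=\Rcm{\gamma}(B_k,L_k)_{1\leq k\leq p}$.

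Next I would verify the two hypotheses of Proposition~\ref{p:74}. For the norm bound, the identity $\|Lx\|_{\GG}^2=\sum_{k=1}^p\alpha_k\|L_kx\|_{\GG_k}^2\leq\brk1{\sum_{k=1}^p\alpha_k\|L_k\|^2}\|x\|_{\HH}^2$ together with the assumption $\sum_{k=1}^p\alpha_k\|L_k\|^2\leq 1$ yields $\|L\|\leq 1$. Since each $B_k$ is maximally monotone, the direct-product operator $B$ is also maximally monotone by \cite[Proposition~23.18]{Livre1}. It remains to identify $L^*\circ B\circ L$ with the operator in the statement: using
\begin{equation*}
L^*\colon\GG\to\HH\colon(y_k)_{1\leq k\leq p}\mapsto
\sum_{k=1}^p\sqrt{\alpha_k}\,L_k^*y_k,
\end{equation*}
together with \eqref{e:mix3}, \eqref{e:mix4}, a direct computation gives
\begin{equation*}
(L^*\circ B\circ L)x
=L^*\brk2{\brk1{\sqrt{\alpha_k}B_k(L_kx)}_{1\leq k\leq p}}
=\sum_{k=1}^p\alpha_kL_k^*\brk1{B_k(L_kx)},
\end{equation*}
which matches the hypothesized maximally monotone sum.

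With $\|L\|\leq 1$, $B$ maximally monotone, and $L^*\circ B\circ L=\sum_{k=1}^p\alpha_kL_k^*\circ B_k\circ L_k$ maximally monotone, Proposition~\ref{p:74}\ref{p:74i} applies and yields $L\proxcc{\gamma}B\xrightarrow{g}L^*\circ B\circ L$ as $0<\gamma\to 0$, which is precisely the claim. I do not anticipate any substantive obstacle: the argument is a direct translation into the product-space formalism of Example~\ref{ex:mix}, with the only mildly delicate step being the bookkeeping of the $\sqrt{\alpha_k}$ factors when unfolding $L^*\circ B\circ L$.
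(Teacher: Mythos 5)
Your proposal is correct and follows essentially the same route as the paper: pass to the product space with $L$ and $B$ as in \eqref{e:mix3}--\eqref{e:mix4}, check $\|L\|\leq 1$ and the maximal monotonicity of $B$, identify $L^*\circ B\circ L$ with the weighted sum, and invoke Proposition~\ref{p:74}\ref{p:74i}. The only difference is that you spell out the norm bound and the unfolding of $L^*\circ B\circ L$, which the paper delegates to Example~\ref{ex:mix2}.
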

\begin{proof}
Define $L$ as in \eqref{e:mix3} and $B$ as in \eqref{e:mix4}, and
recall that from Example~\ref{ex:mix2} that, for every
$\gamma\in\RPP$,
$\Rcm{\gamma}(B_k,\alpha_k)_{1\leq k\leq p}=L\proxcc{\gamma}B$ is
maximally monotone. Therefore, since
$\sum_{k=1}^p\alpha_kL_k^*\circ B_k\circ L_k=L^*\circ B\circ L$,
the conclusion follows from Proposition~\ref{p:74}\ref{p:74i}.
\end{proof}

\begin{corollary}[\protect{\cite[Proposition~1.4]{Atto93}}]
\label{c:515}
Let $A\colon\HH\to2^\HH$ be maximally monotone. Then
\begin{equation}
\label{e:c515}
(\forall\rho\in\RPP)\quad
\lim_{\gamma\to 0}\haus_{\rho}(\moyo{A}{\gamma},A)=0.
\end{equation}
\end{corollary}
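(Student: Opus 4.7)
The plan is to deduce Corollary~\ref{c:515} directly from Proposition~\ref{p:74}\ref{p:74ii} by representing the Yosida approximation $\moyo{A}{\gamma}$ as a resolvent cocomposition via Example~\ref{ex:yosi}.

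First I would set $L=\Id_{\HH}/2$ and $B=2A(2\Id_{\HH})$, as in Example~\ref{ex:yosi}, so that $\moyo{A}{\gamma}=L\proxcc{\gamma/3}B$. Note that $\|L\|=1/2\leq 1$, and $B$ is maximally monotone because $A$ is (monotonicity and maximality are preserved under rescaling of argument and value). A direct computation shows
\begin{equation}
L^*\circ B\circ L=\tfrac{1}{2}\bigl(2A(2\Id_{\HH})\bigr)\bigl(\tfrac{1}{2}\Id_{\HH}\bigr)=A,
\end{equation}
which is in particular maximally monotone, as required by the hypothesis of Proposition~\ref{p:74}.

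Next I would check assumption~\ref{p:74iib} of Proposition~\ref{p:74}\ref{p:74ii}. Here $\GG=\HH$ and $L^*=\Id_{\HH}/2$, so taking $S=2\Id_{\HH}\in\BL(\HH)$ yields $S\circ L^*=\Id_{\HH}$, which is invertible. Thus the $\rho$-Hausdorff convergence part of Proposition~\ref{p:74} applies, giving
\begin{equation}
(\forall\rho\in\RPP)\quad
\lim_{\gamma\to 0}\haus_{\rho}(L\proxcc{\gamma}B,A)=0.
\end{equation}

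Finally, I would conclude by the change of parameter $\gamma\mapsto\gamma/3$: since $\gamma\to 0^+$ if and only if $\gamma/3\to 0^+$, and since $\moyo{A}{\gamma}=L\proxcc{\gamma/3}B$ by Example~\ref{ex:yosi}, the convergence above transfers immediately to $\haus_{\rho}(\moyo{A}{\gamma},A)\to 0$. I do not anticipate any real obstacle; the only conceptual step is recognizing that Example~\ref{ex:yosi} plus the invertibility criterion \ref{p:74iib} reduces Attouch's classical result to our general framework, with the trivial auxiliary operator $S=2\Id_{\HH}$ doing the work.
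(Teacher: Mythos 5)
Your proposal is correct and follows essentially the same route as the paper's proof: the same choice $L=\Id_{\HH}/2$, $B=2A(2\Id_{\HH})$ from Example~\ref{ex:yosi}, the identity $L^*\circ B\circ L=A$, and the reduction to Proposition~\ref{p:74}\ref{p:74ii} via criterion~\ref{p:74iib} (the paper takes $S=\Id_{\HH}$ where you take $S=2\Id_{\HH}$, an immaterial difference). Your added verifications that $\|L\|\leq 1$ and that $B$ is maximally monotone are welcome details the paper leaves implicit.
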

\begin{proof}
Set $L=\Id_{\HH}/2$ and $B=2A(2\Id_{\HH})$. Thus, 
$L^*\circ B\circ L=A$. Further, by Example~\ref{ex:yosi},
$(\forall\gamma\in\RPP)$ $\moyo{A}{\gamma}=L\proxcc{\gamma/3}B$.
Since $\Id_{\HH}\circ L^*$ is invertible, we derive from
Proposition~\ref{p:74}\ref{p:74iib} that
\begin{align}
(\forall\rho\in\RPP)\quad
\haus_{\rho}(\moyo{A}{\gamma},A)
=\haus_{\rho}(L\proxcc{\gamma/3}B,L^*\circ B\circ L)
\to 0\;\;\text{as}\;\;0<\gamma\to 0,
\end{align}
which establishes \eqref{e:c515}.
\end{proof}

\begin{corollary}
\label{c:516}
Suppose that $L\in\BL(\HH,\GG)$ satisfies $0<\|L\|\leq 1$ and let
$g\in\Gamma_0(\GG)$. Assume that $0\in\sri(\dom g-\ran L)$. Then
the following hold:
\begin{enumerate}
\item
\label{c:516i}
$\partial(L\proxcc{\gamma}g)\xrightarrow{g}\partial(g\circ L)$
as $0<\gamma\to 0$.
\item
\label{c:516ii}
Suppose that $g\colon\GG\to\RR$ is $\beta$-Lipschitzian for some
$\beta\in\RPP$. Then
\begin{equation}
(\forall\rho\in\RPP)\quad
\lim_{\gamma\to 0}\haus_{\rho}
\brk1{\partial(L\proxcc{\gamma}g),\partial(g\circ L)}=0.
\end{equation}
\end{enumerate}
\end{corollary}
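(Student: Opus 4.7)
The plan is to reduce both assertions to Proposition~\ref{p:74} applied to the maximally monotone operator $B=\partial g$. Two preliminary identifications are needed.

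First, since $g\in\Gamma_0(\GG)$, we have $g^{**}=g$ and hence $\partial g^{**}=\partial g$. Consequently, Example~\ref{ex:proxc}\ref{ex:proxcii} yields the identification
\begin{equation*}
\partial(L\proxcc{\gamma}g)=L\proxcc{\gamma}\partial g.
\end{equation*}
Second, the qualification condition $0\in\sri(\dom g-\ran L)$ triggers the subdifferential chain rule (e.g., \cite[Theorem~16.47 or Corollary~16.53]{Livre1}), which simultaneously gives $\partial(g\circ L)=L^*\circ\partial g\circ L$ and ensures that this composite operator is maximally monotone. Thus the hypothesis of Proposition~\ref{p:74} concerning the maximal monotonicity of $L^*\circ B\circ L$ is satisfied with $B=\partial g$.

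With these two identifications in hand, \ref{c:516i} is an immediate consequence of Proposition~\ref{p:74}\ref{p:74i}: as $0<\gamma\to 0$,
\begin{equation*}
\partial(L\proxcc{\gamma}g)=L\proxcc{\gamma}\partial g
\xrightarrow{g}L^*\circ\partial g\circ L=\partial(g\circ L).
\end{equation*}

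For \ref{c:516ii}, I would verify assumption~\ref{p:74iia} of Proposition~\ref{p:74}\ref{p:74ii}. If $g\colon\GG\to\RR$ is $\beta$-Lipschitzian, then a standard result (e.g., \cite[Proposition~16.20]{Livre1}) gives $\ran\partial g\subset B(0;\beta)$, so in particular $\ran(\partial g\circ L)$ is bounded. Hence Proposition~\ref{p:74}\ref{p:74ii} applies and yields $\haus_{\rho}(L\proxcc{\gamma}\partial g,L^*\circ\partial g\circ L)\to 0$ as $0<\gamma\to 0$ for every $\rho\in\RPP$, which is exactly the claim after substituting the identifications from the first paragraph. No step presents a real obstacle; the only subtlety is checking that the qualification condition $0\in\sri(\dom g-\ran L)$ is precisely what is needed to invoke the chain rule yielding maximal monotonicity of $L^*\circ\partial g\circ L$, since this is the single nontrivial assumption of Proposition~\ref{p:74} to which the corollary reduces.
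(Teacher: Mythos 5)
Your proposal is correct and follows essentially the same route as the paper: identify $\partial(L\proxcc{\gamma}g)=L\proxcc{\gamma}\partial g$ via Example~\ref{ex:proxc}\ref{ex:proxcii}, use the qualification condition to get $\partial(g\circ L)=L^*\circ(\partial g)\circ L$ maximally monotone, and then invoke Proposition~\ref{p:74}\ref{p:74i} for the first claim and Proposition~\ref{p:74}\ref{p:74iia} (via boundedness of $\ran\partial g$ for Lipschitzian $g$) for the second. The only difference is the precise reference cited for the bounded-subdifferential fact, which is immaterial.
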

\begin{proof}
Invoking \cite[Corollaries~13.38 and 16.53(i)]{Livre1}, $g^{**}=g$
and $\partial(g\circ L)=L^*\circ(\partial g)\circ L$. 

\ref{c:516i}:
It follows from Example~\ref{ex:proxc}\ref{ex:proxcii} and
Proposition~\ref{p:74}\ref{p:74i} that
\begin{equation}
\partial(L\proxcc{\gamma}g)
=L\proxcc{\gamma}\partial g\xrightarrow{g}
L^*\circ(\partial g)\circ L=\partial(g\circ L)
\;\;\text{as}\;\;0<\gamma\to 0.
\end{equation}

\ref{c:516ii}: Appealing to \cite[Corollary~17.19]{Livre1}, 
$\ran\partial g\subset B(0;\beta)$. Thus, $\ran\partial g$ is
bounded, and the conclusion follows from
Proposition~\ref{p:74}\ref{p:74iia}.
\end{proof}

\section*{Acknowledgement}

This work is a part of the author's Ph.D. dissertation. The author
gratefully acknowledges the guidance of his Ph.D. advisor P. L.
Combettes, throughout this work.

\end{document}